\PassOptionsToPackage{dvipsnames}{xcolor}
\documentclass[reqno,12pt]{amsart}

\usepackage[lite]{amsrefs}

\usepackage{amssymb}

\usepackage[all,cmtip]{xy}



\usepackage[inline]{enumitem}
\usepackage{mathtools}
\usepackage{bm}
\usepackage{tkz-euclide}
\usepackage{tikz,xifthen}
\usepackage{tikz-cd}
\usepackage{mathrsfs}
\usetikzlibrary{arrows}
\usetikzlibrary{decorations.markings}
\usetikzlibrary{backgrounds}
\usepackage{pgfplots}
\pgfplotsset{compat=1.10}
\usepgfplotslibrary{fillbetween}
\usetikzlibrary{patterns}
\usepackage[T1]{fontenc}
\usepackage{tensor}
\usepackage{lmodern}
\usepackage{hyperref}
\usepackage[dvipsnames]{xcolor}

\hypersetup{
    colorlinks=true,
    linkcolor=MidnightBlue,
    filecolor=magenta,      
    urlcolor=MidnightBlue,
    pdftitle={Fundamentals of Lie categories},
    pdfpagemode=FullScreen,
}

\makeatletter
\renewcommand\subsection{\@startsection{subsection}{2}%
  \z@{-.5\linespacing\@plus-.7\linespacing}{.5\linespacing}%
  {\bf}}
\makeatother

\usetikzlibrary{decorations.markings}
\tikzset{double line with arrow/.style args={#1,#2}{decorate,decoration={markings,%
mark=at position 0 with {\coordinate (ta-base-1) at (0,1pt);
\coordinate (ta-base-2) at (0,-1pt);},
mark=at position 1 with {\draw[#1] (ta-base-1) -- (0,1pt);
\draw[#2] (ta-base-2) -- (0,-1pt);
}}}}
\tikzset{Equals/.style={-,double line with arrow={-,-}}}

\newcommand{\plus}{\scalebox{0.6}{$\bm{+}$}}

\makeatletter
\def\blfootnote{\xdef\@thefnmark{}\@footnotetext}
\makeatother
\setlist[enumerate]{leftmargin=25pt, label={(\roman*)}}

\hypersetup{colorlinks=true,linkcolor=MidnightBlue,citecolor=BrickRed}

\DeclarePairedDelimiterX\set[1]\lbrace\rbrace{\def\given{\;\delimsize\vert\;}#1}
\renewcommand{\d}[1]{\ensuremath{\operatorname{d}\!{#1}}}

\newcommand{\rra}{\rightrightarrows}

\newcommand{\R}{\mathbb{R}}

\newcommand{\rk}{\mathcal{R}}
\newcommand{\C}{\mathcal{C}}
\newcommand{\D}{\mathcal{D}}
\newcommand{\G}{\mathcal{G}}
\newcommand{\X}{\mathcal{X}}

\renewcommand{\Im}{\operatorname{Im}}
\newcommand{\End}{\operatorname{End}}
\newcommand{\Int}{\operatorname{Int}}
\newcommand{\GL}{\operatorname{GL}}
\newcommand{\rank}{\operatorname{rank}}
\newcommand{\rankl}{\operatorname{rank}^L}
\newcommand{\rankr}{\operatorname{rank}^R}
\newcommand{\codim}{\operatorname{codim}}
\newcommand{\ind}{\operatorname{ind}}
\newcommand{\pr}{\mathrm{pr}}
\newcommand{\id}{\mathrm{id}}
\newcommand{\inv}{\mathrm{inv}}

\newcommand{\vf}{\mathfrak X}

\newcommand{\comp}[1]{{#1}^{(2)}}

\DeclareRobustCommand{\gobblefive}[5]{}
\newcommand*{\SkipTocEntry}{\addtocontents{toc}{\gobblefive}}


\newtheorem*{claim}{Claim}
\numberwithin{equation}{section}



\theoremstyle{plain} 
\newtheorem{thm}[equation]{Theorem}
\newtheorem{cor}[equation]{Corollary}
\newtheorem{lem}[equation]{Lemma}
\newtheorem{prop}[equation]{Proposition}

\theoremstyle{definition}
\newtheorem{defn}[equation]{Definition}

\theoremstyle{remark}
\newtheorem{rem}[equation]{Remark}
\newtheorem*{rem*}{Remark}
\newtheorem{ex}[equation]{Example}


\pgfdeclarepatternformonly{mydots}{\pgfqpoint{-1pt}{-1pt}}{\pgfqpoint{1pt}{1pt}}{\pgfqpoint{1.5pt}{1.5pt}}
{%
  \pgfpathcircle{\pgfqpoint{0pt}{0pt}}{.3pt}%
  \pgfusepath{fill}%
}%

\setcounter{tocdepth}{1}

\setlength{\textwidth}{\paperwidth}
\addtolength{\textwidth}{-2.5in}
\calclayout


\newcommand{\Addresses}{{
  \bigskip
  \footnotesize

  Ž.\ Grad, \textsc{Centre for Mathematical Analysis, Geometry, and Dynamical Systems,
Department of mathematics,
Instituto Superior Técnico,
Av. Rovisco Pais,
Lisbon,
Portugal.}\par\nopagebreak
  \textit{E-mail address}: \url{zan.grad@tecnico.ulisboa.pt}.\par\nopagebreak
  
  \textit{Webpage}: \url{https://zangrad.github.io}.
}}

\begin{document}

\title{Fundamentals of Lie categories}

\author[Žan Grad]{Žan Grad}





\subjclass[2020]{22A99, (22A22, 53Z05, 58H05)}
\keywords{Lie category, Lie groupoid, reversible process, rank, entropy}
\thanks{This work was supported in part by PhD Grant UI/BD/152069/2021 of FCT, Portugal.}

\begin{abstract}
We introduce the basic notions and present examples and results on Lie categories -- categories internal to the category of smooth manifolds. Demonstrating how the units of a Lie category $\C$ dictate the behavior of its invertible morphisms $\G(\C)$, we develop sufficient conditions for $\G(\C)$ to form a Lie groupoid. We show that the construction of Lie algebroids from the theory of Lie groupoids carries through, and ask  when the Lie algebroid of $\G(\C)$ is recovered. We reveal that the lack of invertibility assumption on morphisms leads to a natural generalization of rank from linear algebra, develop its general properties, and show how the existence of an extension $\C\hookrightarrow \G$ of a Lie category to a Lie groupoid affects the ranks of morphisms and the algebroids of $\C$. Furthermore, certain completeness results for invariant vector fields on Lie monoids and Lie categories with well-behaved boundaries are obtained. Interpreting the developed framework in the context of physical processes, we yield a rigorous approach to the theory of statistical thermodynamics by observing that entropy change, associated to a physical process, is a functor.
\end{abstract}

\maketitle

\vspace{-1em}
\tableofcontents

\blfootnote{
}
\vspace{-2em}

\section{Introduction}
\label{sec:intro}
Since its conception, category theory has proven to provide a unified framework for the language of mathematics, by making use of the observation that objects and morphisms thereof arise regardless of the mathematical field one considers. This paradigm has also been adopted by physicists (see e.g.\ \cite{catsquantum}), namely that one can interpret physical states (contained in a certain phase space) as objects, and  physical processes as morphisms between them, thus forming a category which corresponds to the physical system at hand. In order to provide such a realization of a given physical system, the corresponding category should ideally have the capacity to straightforwardly describe the phenomena which pertain to the given physical system, and also capture the means for describing and calculating relevant physical quantities. For a physicist, the latter is generally done using the basic tools of calculus, in terms of a preferred set of coordinates. 

There is a natural way of obtaining such a unified framework for describing physical processes, by intertwining category theory with the theory of smooth manifolds. That is, we require the set of objects (states) and the set of morphisms (processes) of an abstract category to be a pair of smooth manifolds, and that the composition law $(f,g)\mapsto fg$ is a smooth map (concatenation of two processes); the mathematical structure obtained is that of a category internal to the category $\mathbf{Diff}$ of smooth manifolds. Historically, this kind of a mathematical structure, which we will define in more precise terms as a Lie category, was first introduced and briefly studied by Charles Ehresmann in his seminal paper \cite{ehr1959}; in the same paper, Ehresmann further focused on the notion of a Lie groupoid, which additionally imposes that all morphisms are invertible. This latter notion has nowadays been thoroughly researched and continues to have a status of an active field of research, whereas the same cannot be said for Lie categories; apart from the work of Ehresmann, this is the first paper dedicated to their systematic treatment. The main reason behind the fact that Lie categories have gained negligible attention compared to Lie groupoids lies in the fact that the assumption on morphisms being invertible implies that all left and right translations are diffeomorphisms, which accounts for certain preferable properties of the structure of a Lie groupoid, that will also be highlighted in this paper.

Our motivation for studying Lie categories, and not merely groupoids, stems from the physical interpretation that invertible morphisms correspond to reversible processes, and in physics not all processes in a given system are reversible; this is well known from the theory of thermodynamics, where reversible processes are precisely those where the change of entropy when transiting between two states is zero; we will describe this in precise terms in Section \ref{sec:std}. Another, perhaps more notorious motivation, is given by irreversibility of wave-function collapse in quantum theory, which was actually our initial motivation for dropping the invertibility assumption.

As we will see, it will turn out to be desirable to allow the space of morphisms of a Lie category to possess a boundary -- we will encounter both, mathematical and physical examples where the boundary of the space of morphisms will play a distinguished role. To temporarily appease and motivate the reader to this regard, let us briefly note that Lie monoids are examples of Lie categories with the set of objects a singleton, and that the monoid $[0,\infty)$ -- either for multiplication or addition -- provides a first example of a Lie category with boundary. This simple example already shows certain intriguing qualities: for instance, considering $[0,\infty)$ for multiplication, all its invertible elements are contained in its interior, and considering $[0,\infty)$ for addition, its only invertible element is in its boundary. This phenomenon, as may be expected, is one of the features of Lie categories, namely that the units dictate the behavior of invertibles.

Overall, we aim to convince the reader that the interplay of geometrical and categorical structures alone (without the invertibility assumption) provides new exciting questions which were so far overlooked within the scope of Lie theory. Let us summarize our main objectives.
\begin{enumerate}
\item To demonstrate that Lie categories allow for an abundance of interesting examples which have so far been missed in the theory of Lie groupoids.
\item To expose some ideas and constructions which carry through to Lie categories from the theory of Lie groupoids, e.g.\ the Lie algebroid construction.
\item To inspect the relation between Lie groupoids and Lie categories, and show that novel notions can be obtained when invertibility is dropped.
\item Last, but not least, to provide an algorithm for constructing Lie categories that describe physical systems by making use of the notion of entropy, and to reveal that the mathematical structure implicitly present in the foundations of statistical thermodynamics is that of a Lie category.
\end{enumerate}
\SkipTocEntry\section*{Notation}

All our categories are small, i.e.\ the classes of objects and morphisms are sets. We will denote a category by $\C\rra \X$, where $\C$ is the set of morphisms, $\X$ is the set of objects, and the two arrows indicate the \textit{source map} $s\colon\C\rightarrow \X$ and \textit{target map} $t\colon\C\rightarrow\X$, which are defined by 
\[
s(x\xrightarrow g y)=x,\quad t(x\xrightarrow g y)=y,
\]
for any morphism $g\colon x\rightarrow y$ in $\C$. Alongside these two maps, a category $\C$ comes equipped with the \textit{composition map}
\[
m\colon \comp\C\rightarrow \C,\quad (g,h)\mapsto gh,
\]
where $\comp\C=\set{(g,h)\in \C\times \C\given s(g)=t(h)}$ is the set of all pairs of \textit{composable morphisms}. Moreover, $\C$ comes equipped with the \textit{unit map}
\[
u\colon \X\rightarrow \C,\quad x\mapsto 1_x.
\]
We also define
\[
\C_x=s^{-1}(x),\quad \C^y=t^{-1}(y),\quad \C_x^y=\C_x\cap \C^y,
\]
and call $\C_x$ the \textit{source fibre} at $x$, and $\C^y$ the \textit{target fibre} at $y$. Note that any morphism $g\in\C$ determines the maps
\begin{equation}
\label{eq:trans}
\begin{aligned}
&L_g\colon \C^{s(g)}\rightarrow \C^{t(g)},\quad L_g(h)=gh,\\
&R_g\colon \C_{t(g)}\rightarrow \C_{s(g)},\quad R_g(h)=hg,
\end{aligned}
\end{equation}
called the \textit{left translation} and \textit{right translation} by $g$, which are just the pre-composition and post-composition by $g$, respectively.

\section{Basic definitions and examples}
\label{sec:basics}

\begin{defn}
A \textit{Lie category} is a small category $\C \rra \X$, where $\C$ is a smooth manifold with or without boundary, $\X$ is a smooth manifold without boundary, and there holds:
\begin{enumerate}[label={(\roman*)}]
\item The source and target maps $s,t\colon \C\rightarrow \X$ are smooth submersions.
\item The unit map $u\colon \X\rightarrow \C$ and the composition map $m\colon \comp\C\rightarrow \C$ are smooth.
\end{enumerate}
If $\C$ has a boundary, we also assume that $\C\rra\X$ has a \textit{regular boundary}, that is:
\begin{enumerate}
\item[(iii)] The restrictions $\partial s,\partial t\colon\partial \C\rightarrow \X$ of $s$ and $t$ are smooth submersions.
\end{enumerate}
\end{defn}
\begin{rem}
Given any $x\in\X$, assumptions (i) and (iii) ensure that $\C_x$ and $\C^x$ are neat submanifolds of $\C$ (see \cite[p.\ 60]{difftop} or \cite[Proposition 4.2.9]{corners}), that is:
\begin{align}
\label{eq:fibrebdr}
\partial(\C_x)=\C_x\cap \partial\C \quad\text{and}\quad \partial(\C^x)=\C^x\cap \partial\C.
\end{align}
Moreover, assumptions (i) and (iii) ensure that the set
\[
\comp\C=(s\times t)^{-1}(\Delta_\X)
\]
of composable morphisms is a neat submanifold of $\C\times\C$, that is:
\begin{align}
\label{eq:regbdr_c2}
\partial(\comp\C)=\comp\C\cap \partial(\C\times\C)=\comp \C\cap (\C\times \partial\C \cup \partial\C\times \C),
\end{align}
by transversality theorem, see Corollary \ref{cor:fibred_prod}. This corollary also ensures that if $\C$ has a boundary, the corner points of $\comp\C$ are precisely the composable pairs in $\partial\C\times\partial\C$; moreover, the tangent space of $\comp\C$ at a composable pair $(g,h)$ equals
\begin{align}
T_{(g,h)}\C^{(2)}=\set{(v,w)\in T_g\C\oplus T_h\C\given \d s(v)=\d t(w)}.
\end{align}
Smoothness of $\comp\C$ implies that the requirement of smoothness of the composition map $m\colon \comp\C\rightarrow \C$ makes sense, and furthermore that left and right translations $L_g,R_g$ are smooth maps between appropriate fibres, as defined by equations \eqref{eq:trans}; we thus obtain a covariant functor $\C\rightarrow \mathbf{Diff}$, given on objects as $x\mapsto \C^x$ and on morphisms as $g\mapsto L_g$, and a contravariant one given by $x\mapsto \C_x$, $g\mapsto R_g$. 
\end{rem}
\begin{rem}
\label{rem:units_embedded}
The unit map $u\colon \X\rightarrow \C$ of a Lie category $\C\rra\X$ is an embedding, which is a consequence of the fact that it is a smooth section of the source (and target) map, hence an injective immersion which is a homeomorphism onto its image, whose continuous inverse is given by $s|_{u(\X)}$. 
\end{rem}
\begin{defn}
A \textit{morphism} of Lie categories is a smooth functor $F\colon \C\rightarrow \D$. A Lie category $\C$ is said to be a \textit{Lie subcategory} of $\D$, if there is an injective immersive morphism $F\colon\C\rightarrow \D$ of Lie categories.
\end{defn}

Remark \ref{rem:units_embedded} implies that any morphism of Lie categories induces a smooth map between the respective object manifolds of $\C$ and $\D$. We now turn to examples of Lie categories.

\begin{ex}\
In the case when the object space $\X$ is a singleton, a Lie category $\C\rra \set{*}$ will be called a \textit{Lie monoid}. Simply put, a Lie monoid is a monoid $M$ together with a structure of a smooth manifold with or without boundary, such that the multiplication $m\colon M\times M\rightarrow M$ is smooth.

Concrete examples of Lie monoids frequently arise as embedded submonoids of Lie groups. For instance, we may consider the closed ray $[0,\infty)\subset \R$ for addition, or more generally, the $n$-dimensional half-space $\mathbb H^n=\R^{n-1}\times [0,\infty)\subset \R^n$, where $n\in\mathbb N$, which is a commutative Lie monoid for the usual addition. On the other hand, the closed ray $[0,\infty)$ for multiplication is not\footnote{This is easily seen since it has a non-cancellative (absorbing) element.} a submonoid of any Lie group. Further examples of Lie monoids that do not arise as submonoids of Lie groups are: 
\begin{enumerate}[label={(\roman*)}]
\item The set $\R^{n\times n}$ of square $n$-dimensional matrices is a Lie monoid for matrix multiplication; more generally, we may consider the Lie monoid $\End(V)$ of endomorphisms of a finite dimensional vector space $V$, under composition. Even more generally, any finite-dimensional unital algebra is a Lie monoid that is enriched over the category $\mathbf{Vect}$ of vector spaces, since the multiplication map is bilinear.
In particular, this includes the real line, the complex plane, and quaternions for multiplication.
\item The closed unit disk $\overline{\mathbb D}\subset \mathbb C$, an abelian Lie monoid for complex multiplication, and the closed unit 4-ball $\bar{\mathbb B}^4$, a non-abelian Lie monoid for quaternionic multiplication.
\end{enumerate}
\end{ex}

The following example generalizes the similar notion of triviality from the theory of Lie groupoids.
\begin{ex}
Let $X$ be a smooth manifold without boundary and $M$ a Lie monoid. A \textit{trivial Lie category} is defined by $\C=X\times M\times X$ and $\X=X$, with $s=\pr_3, t=\pr_1$ and composition as
\[
(z,g,y)(y,h,x)=(z,gh,x).
\]
That composition is smooth follows from the smoothness of multiplication in $M$. In the case when $M=\set e$ is a trivial monoid, we obtain the well-known \textit{pair groupoid}.
\end{ex}

The next example reveals the spirit of the notion of a Lie category -- that is, it can be thought of as a smooth family of endomorphisms of an abstract structure, parametrized by the base manifold $\X$. This is aligned with the philosophy that a Lie groupoid can be thought of as a smooth collection of automorphisms (symmetries) of a structure parametrized by $\X$.
\begin{ex}
Let $\pi\colon E\rightarrow X$ be a vector bundle over a smooth manifold $X$  without boundary, whose typical fibre is a fixed vector space $V$. The \textit{endomorphism category} of $E\rightarrow X$ is the category $\End(E)\rra X$, where the set of morphisms is defined as the set
\[
\End(E)=\set{\xi\colon E_x\rightarrow E_y\given \xi \text{ is linear},x,y\in X}
\]
of linear homomorphisms between the fibres of $E\rightarrow X$, and the structure maps $s,t,m,u$ are defined in the obvious way. To show that $\End(E)$ admits a structure of a Lie category without boundary, we must define a smooth structure on $\End(E)$; it is induced by local trivializations on $E$ in the following way. Denote by 
\[
\set{U_i\times V\xrightarrow{\psi_i} \pi^{-1}(U_i)\given {i\in I}}
\]
an atlas of local trivializations of $E$ over an open cover $(U_i)_{i\in I}$ of $X$, and denote by $\tau_{ij}\colon U_i\cap U_j\rightarrow \GL(V)$ the respective transition maps, i.e.\ $\psi_i^{-1}\psi_j(x,v)=(x,\tau_{ij}(x)v)$. For any two indices $i,j\in I$, we define the map
\begin{align*}
&\Psi_i^j\colon U_j\times \End(V)\times U_i\rightarrow \End(E)^{U_j}_{U_i}=s^{-1}(U_i)\cap t^{-1}(U_j)\\
&\Psi_i^j(y,A,x)(e|_x)=\psi_j(y,A\pr_V \psi^{-1}_i(e|_x)),
\end{align*}
whose inverse is 
\[
(\Psi_i^j)^{-1}(\xi\colon E_x\rightarrow E_y)=(y,v\mapsto \pr_V\psi_j^{-1}\xi\psi_i(x,v),x).
\]
The smoothness of transition maps in this atlas is easily checked by computing \begin{align*}
(\Psi_k^l)^{-1}\Psi_i^j(y,A,x)=(y,\tau_{jl}(y)^{-1}A\tau_{ik}(x),x),
\end{align*}
where $x\in U_i\cap U_k$ and $y\in U_j\cap U_l$. From the local charts, it is clear that $s$ and $t$ are submersions, and the smoothness of composition map $m$ follows from smoothness of multiplication in $\End(V)$; finally, this composition is bilinear when restricted to $\End(E)^z_y\times \End(E)^y_x\subset \End(E)\comp{}$, so $\End(E)$ is moreover enriched over the category $\mathbf{Vect}$ of vector spaces.
\end{ex}

\begin{ex}[Bundles of Lie monoids]
A Lie category with coinciding source and target map $s=t=:p$ is called a \textit{bundle of Lie monoids}. In this case, two morphisms are composable if, and only if, they are in the same fibre of $p$.

A concrete example of such a Lie category is the \textit{endomorphism bundle} $E^*\otimes E\rightarrow X$ of a vector bundle $E\rightarrow X$, with the composition given on simple tensors as $(\varphi_2\otimes v_2)(\varphi_1\otimes v_1)=\varphi_2(v_1)\,\varphi_1\otimes v_2$, and extended by bilinearity; this can easily be identified with the composition of linear maps $E_x\rightarrow E_x$, so $E^*\otimes E\subset \End(E)$ is a subcategory of $\End(E)$. Using Lemma \ref{lem:lie_subcat}, it is not hard to see that it is actually an embedded Lie subcategory of $\End(E)$, with respect to the inclusion map.

Another concrete example of a bundle of Lie monoids is the \textit{exterior bundle}, 
\[\Lambda(E)=\bigoplus_{i=0}^{\rank E}\Lambda^k(E),\] of an $\mathbb F$-vector bundle $E\rightarrow X$, i.e.\ $\Lambda(E)$ consists of all multivectors in $E$, and composition of $\alpha\in \Lambda^k(E_x)$ and $\beta\in \Lambda^l(E_x)$ is given as $\alpha\wedge\beta$. The units are given by $1_x=1\in \mathbb F=\Lambda^0(E_x)$, for any $x\in X$. Again, the given composition map is smooth, which follows easily from bilinearity of wedge product; moreover, $\Lambda(E)$ is again enriched over $\mathbf{Vect}$. In general, bundles of Lie monoids enriched over $\textbf{Vect}$ would rightfully be called \textit{smooth bundles of unital associative algebras}.
\end{ex}
\begin{ex}[Action categories]
An \textit{action} of a Lie monoid $M$ on a smooth manifold $X$ is a smooth map $\phi\colon M\times X\rightarrow X$, denoted by $\phi(g,x)=gx$, which satisfies $ex=x$ and $g(hx)=(gh)x$, for any $x\in X$ and $g,h\in M$. We observe that contrary to the case of Lie group actions, the map $\phi$ may not be submersive since the action is no longer by automorphisms of $X$, thus the target map in the naïve generalization of the action groupoid would not be a submersion.

	To remedy this, we construct the \textit{action category} of a given Lie monoid action $\phi\colon M\times X\rightarrow X$ as follows. Denote by
	$$
	M\ltimes X = \set{(g,x)\in M\times X\given \phi \text{ is a submersion at }(g,x)}
	$$
	the set of regular points of the action $\phi$.\footnote{Intuitively, the action category accounts only	 for the non-critical dynamics pertaining to the given action.} Defining the structure maps as usual, i.e.\ $s(g,x)=x$, $t(g,x)=gx$, the units as $u(x)=(e,x)$, and the composition as
	\[
	(g,hx)(h,x)=(gh,x),
	\] 
	we obtain a Lie category $M\ltimes X\rra X$. Indeed, since $M\ltimes X\subset M\times X$ is an open subset, we only need to check that if $\phi$ is a submersion at the points $(h,x)$ and $(g,hx)$, it is also a submersion at $(gh,x)$. To that end, we first notice that we may write the condition $g(hx)=(gh)x$ as an equality of maps $M\times M\times X\rightarrow X$,
	\[
	\phi\circ(\pr_1,\phi\circ(\pr_2,\pr_3))=\phi\circ(m\circ(\pr_1,\pr_2),\pr_3),
	\]
	where $m\colon M\times M\rightarrow M$ denotes the multiplication in the Lie monoid $M$. Differentiating this equality at $(g,h,x)$, we obtain
	\[
	\d\phi_{(g,hx)}\circ (\pr_1,\d\phi_{(g,x)}\circ(\pr_2,\pr_3))=\d\phi_{(gh,x)}\circ (\d m_{(g,h)}\circ (\pr_1,\pr_2),\pr_3).
	\]
	By assumption, the left-hand side is surjective, thus the same holds for the first map on the right-hand side. Hence $M\ltimes X\rra X$ is a category and thus a Lie category.
\end{ex}

\begin{ex}
\label{ex:order_cat}
A simple, yet important example of a Lie category is the \textit{order category} of $\R$, which is defined as the wide subcategory of the pair groupoid $\G=\R\times\R\rra \R$, given by
\[
\C=\set{(y,x)\in\R\times\R\given x\leq y}.
\]
The space of morphisms is thus the half-space below the diagonal in $\R^2$, and the  source and target maps are the projections to the vertical and the horizontal axis, respectively, implying that the boundary of $\C$ is regular. Moreover, since the inversion in the pair groupoid $\G$ is given by the reflection over the diagonal, the units in $\C$ are precisely the elements of the diagonal, and these are the only invertibles. 

\begin{figure}[h!]
	\includegraphics{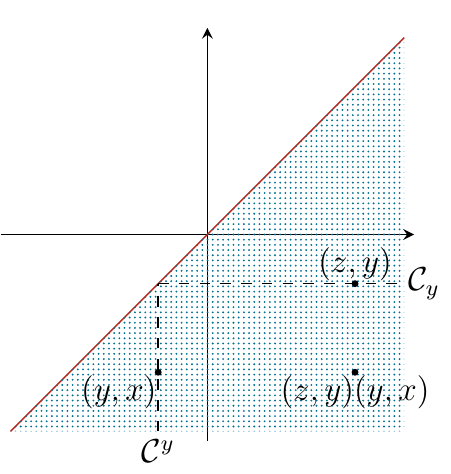}
\end{figure}
\noindent Although simple, this example has an important property: $\C$ can be seen as the preimage of the set $[0,\infty)$ under the functor 
\[
f\colon \R\times \R\rightarrow (\R,+), \quad f(y,x)=y-x.
\]
The following example is a generalization of this; we will make use of it when considering applications to statistical thermodynamics in Section \ref{sec:std}.
\end{ex}
\begin{ex}
\label{ex:preimage_subcat}
Suppose a Lie category $\D\rra \X$ without boundary is given, together with a smooth functor $f\colon \D\rightarrow (\R,+)$ such that:
\begin{samepage}
\begin{enumerate}
\item $0\in\R$ is a regular value of $f$.
\item $s|_{f^{-1}(0)}$ and $t|_{f^{-1}(0)}$ are submersions.
\end{enumerate}
\end{samepage}
Then the preimage $\C=f^{-1}\big([0,\infty)\big)$ is a Lie category. Indeed, since $0$ is a regular value of $f$, $\C$ is a smooth embedded submanifold in $\D$ with boundary $\partial\C=f^{-1}(0)$, see e.g.\ \cite[p.\ 62]{difftop}. Moreover, since $f^{-1}\big((0,\infty)\big)=\Int \C$ is open in $\D$, the restrictions $s|_\C,t|_\C$ of the source and target maps to $\C$ are submersions. Functoriality of $f$ implies that $\C$ is a wide subcategory of $\D$ with all invertibles contained within $\partial \C$, and moreover it also implies that $\partial \C$ is a wide subcategory of $\C$. The assumption (ii) enables us to use Lemma \ref{lem:lie_subcat} below to conclude that $\C$ is an embedded Lie subcategory of $\D$.
\end{ex}

\begin{lem}
\label{lem:lie_subcat}
Let $\C$ be a wide subcategory of a Lie category $\D\rra\X$. Suppose $\C$ is also an embedded submanifold of $\D$, such that $s|_\C, t|_\C$ are submersions and either of the following holds:
\begin{enumerate}
\item $\C$ has no boundary.
\item $\C$ has a boundary, and $s|_{\partial \C}, t|_{\partial\C}$ are submersions.
\end{enumerate}
Then $\C$ is an embedded Lie subcategory of $\D$.
\end{lem}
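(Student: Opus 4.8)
The plan is to verify directly that $\C$, equipped with the structure maps inherited from $\D$ by restriction, satisfies the axioms of a Lie category, and that the inclusion $\iota\colon\C\hookrightarrow\D$ is an injective immersive smooth functor. Since $\C$ is a \emph{wide} subcategory, its object manifold is again $\X$ (a manifold without boundary), and its source, target, unit and composition maps are the restrictions $s|_\C$, $t|_\C$, $u$ and $m|_{\comp\C}$ of those of $\D$. The submersivity of $s|_\C,t|_\C$ (and, in case (ii), of their boundary restrictions) is hypothesised, and their smoothness is automatic because the restriction of a smooth map to an embedded submanifold is smooth. Thus the only two points requiring genuine work are the smoothness of the unit map as a map \emph{into} $\C$, and the smoothness of the composition map, for which I first need to know that $\comp\C$ is a manifold.

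The repeated tool here is the standard fact that if a smooth map $\varphi\colon N\to\D$ has image contained in an embedded submanifold $\C\subseteq\D$, then $\varphi$ is smooth as a map $N\to\C$. First I would apply this to the unit map: because $\C$ is wide, $u(\X)\subseteq\C$, so the smooth map $u\colon\X\to\D$ factors through $\C$ and is therefore smooth into $\C$; this also recovers that $u$ is an embedding onto the units, as in Remark \ref{rem:units_embedded}. The composition map is handled the same way once $\comp\C$ is available as a manifold: since $\C$ is closed under composition we have $m(\comp\C)\subseteq\C$, and since $\comp\C\subseteq\comp\D$ the restriction of the smooth map $m\colon\comp\D\to\D$ to $\comp\C$ is smooth into $\D$, hence, by the same fact, smooth into $\C$.

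The crux --- and the step I expect to be the main obstacle --- is establishing that $\comp\C=(s|_\C\times t|_\C)^{-1}(\Delta_\X)$ is itself a (neat) embedded submanifold of $\C\times\C$, compatibly embedded in $\comp\D$. This is exactly where the submersion hypotheses enter: in case (i), submersivity of $s|_\C,t|_\C$ makes $s|_\C\times t|_\C$ transverse to the diagonal $\Delta_\X\subseteq\X\times\X$, so Corollary \ref{cor:fibred_prod} yields that $\comp\C$ is an embedded submanifold; in case (ii), the additional submersivity of $s|_{\partial\C},t|_{\partial\C}$ is precisely the hypothesis of that corollary guaranteeing that the boundary is regular, i.e.\ that $\comp\C$ is a \emph{neat} submanifold and hence that $\C\rra\X$ has a regular boundary in the sense of axiom (iii). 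I would take care to check that the transversality computation needed is insensitive to whether $\C$ carries a boundary, by working in the interior and along $\partial\C$ separately, as in the Remark following the definition of a Lie category.

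Finally, I would assemble the pieces: $\C\rra\X$ satisfies axioms (i)--(iii), so it is a Lie category, and the inclusion $\iota$ is a functor (because $\C$ is a subcategory) which is an injective immersion (because $\C$ is an embedded submanifold of $\D$). By definition this exhibits $\C$ as an embedded Lie subcategory of $\D$.
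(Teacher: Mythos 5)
Your proposal is correct, and it reaches the goal by a genuinely different route than the paper. The paper also reduces everything to the smoothness of $m|_{\comp\C}$, but it then proves that $\comp\C=\comp\D\cap(\C\times\C)$ is a submanifold of $\comp\D$ by showing the inclusion $\C\times\C\hookrightarrow\D\times\D$ is transversal to $\comp\D$: an explicit tangent-space computation in which, given $(v,w)\in T_g\D\oplus T_h\D$, the submersivity of $s|_\C$ and $t|_\C$ is used to split off a part $(v',w')\in T_g\C\oplus T_h\C$ whose remainder lies in $T_{(g,h)}\comp\D$; when boundaries are present this is supplemented by a stratum-by-stratum check on $\Int\C\times\Int\C$, $(\Int\C\times\partial\C)\cup(\partial\C\times\Int\C)$, $\partial\C\times\partial\C$, combined with Propositions \ref{prop:transversality_char} and \ref{prop:transversality}. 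You instead realize $\comp\C$ intrinsically as $(s|_\C\times t|_\C)^{-1}(\Delta_\X)$ and invoke Corollary \ref{cor:fibred_prod}, whose hypotheses are word-for-word the hypotheses of the lemma; this eliminates both the hand computation and the stratified case analysis, at the cost of one extra (standard) step: to restrict $m$, which is only defined on $\comp\D$, you must know the inclusion $\comp\C\hookrightarrow\comp\D$ is smooth, which follows from your ``repeated tool'' since $\comp\C\hookrightarrow\C\times\C\hookrightarrow\D\times\D$ is smooth with image in the embedded submanifold $\comp\D$ (and the two candidate smooth structures on $\comp\C$ agree by uniqueness of embedded structures on a fixed subspace). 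What the paper's route buys in exchange is the explicit geometric statement that $\C\times\C$ meets $\comp\D$ transversally inside $\D\times\D$, i.e.\ information about how $\C$ sits in $\D$, which your intrinsic argument bypasses. One small inaccuracy in your write-up: regularity of the boundary of $\C$ (axiom (iii)) is not something Corollary \ref{cor:fibred_prod} needs to deliver --- it is verbatim your hypothesis (ii); what the corollary delivers is the neatness of $\comp\C$ in $\C\times\C$ and the identification of its boundary. This conflation is harmless to the argument.
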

\begin{proof}
The only thing needed to be proven is that the restriction $m|_{\comp\C}\colon \comp \C\rightarrow \C$ of the composition map $m\colon \comp\D\rightarrow\D$, is smooth. To that end, it is enough to check that $\comp \C$ is a submanifold of $\comp \D$; notice that $\comp\C=\comp \D\cap (\C\times \C)$, so we will make use of the transversality theorem. 

Suppose first that $\C$ and $\D$ have no boundary, so that by the usual transversality theorem it is enough to check $\comp\D$ and $\C\times\C$ are transversal in $\D\times\D$, i.e.\
\[
T_{(g,h)}\comp \D+ T_g \C\oplus T_h\C=T_g\D\oplus T_h\D,\quad\text{for all }(g,h)\in \comp\C.
\]
To show this equality, let $(v,w)\in T_g\D\oplus T_h\D$. Since $s|_\C$ is a submersion, there is a vector $v'\in T_g\C$ with $\d s_g(v')=\d t_h(w)$. Define $v''=v-v'$, and now since $t|_\C$ is a submersion, there is a vector $w'\in T_h\C$ such that
\[
\d t_h(w')=\d t_h(w)-\d s_g(v'').
\]
Now define $w''=w-w'$. Clearly, $(v',w')\in T_g \C\oplus T_h\C$, and on the other hand, the definition of $w'$ and $w''$ imply
\[
\d s_g(v'')=\d t_h(w)- \d t_h(w')=\d t_h(w''),
\]
so that $(v'',w'')\in T_{(g,h)} \comp \D$, which concludes our proof for the boundaryless case. If $\D$ has a boundary, then the above proof together with Proposition \ref{prop:transversality_char} used on the inclusion $\C\times\C\xhookrightarrow{\iota} \D\times\D$, ensures that $\iota\pitchfork\comp\D$, and since $\comp\D\subset \D\times\D$ is a neat submanifold, $\comp\C\subset \C\times \C\subset \D\times\D$ is a submanifold by Proposition \ref{prop:transversality}, and hence since $\comp\C\subset \comp\D$, it is a submanifold of $\comp\D$.

If $\C$ has a boundary, then the assumption (ii) ensures that all the strata 
\[
\Int\C\times\Int\C, (\Int \C\times\partial \C)\cup(\partial \C\times\Int\C), \partial\C\times\partial\C
\]
of $\C\times\C$ are transversal to $\comp\D$ by a similar proof as above, so by Proposition \ref{prop:transversality_char} we again conclude $\iota\pitchfork \comp\D$.
\end{proof}
\begin{rem}
In particular, an embedded submonoid of a Lie monoid is its embedded Lie submonoid, however, a direct proof of this is much easier.
\end{rem}



\section{Reversibility of morphisms}
\textit{Reversible} (synonymously, \textit{invertible}) morphisms form an important subclass of morphisms in any category $\C$; recall that $g\in\C$ is said to be \textit{invertible}, if there is a unique morphism $g^{-1}\in \C$ such that $g^{-1}g=1_{s(g)}$ and $gg^{-1}=1_{t(g)}$. We observe that the set
\[
\G(\C)=\set{g\in\C\given g\text{ is invertible}}
\]
is a groupoid over the same base as $\C$. We call $\G(\C)$ the  \textit{core} of $\C$, the study of which begins with the following simple observation.
\begin{prop}
\label{prop:inv}
For any morphism $g$ in a category $\C$, the following are equivalent.
\begin{enumerate}[label={(\roman*)}]
\item $g$ is invertible.
\item Left and right translations by $g$ are bijections.
\item Left and right translations by $g$ are surjections.
\item $g$ has a left and a right inverse, i.e.\ there exist $g'\in \C^{s(g)}, g''\in \C_{t(g)}$ with $gg'=1_{t(g)}$ and $g''g=1_{s(g)}$. 
\end{enumerate}
\end{prop}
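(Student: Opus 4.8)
The plan is to prove the cyclic chain of implications (i) $\Rightarrow$ (ii) $\Rightarrow$ (iii) $\Rightarrow$ (iv) $\Rightarrow$ (i). Since the statement concerns an arbitrary small category and makes no reference to the manifold structure, the argument will be purely categorical: it reduces to careful bookkeeping of which fibres the various morphisms and units inhabit, as recorded in \eqref{eq:trans}, together with the associativity and unit axioms. No smoothness is used anywhere.

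For (i) $\Rightarrow$ (ii) I would exhibit explicit inverses of the translations. Using $g^{-1}g=1_{s(g)}$ and $gg^{-1}=1_{t(g)}$, one checks that $L_{g^{-1}}$ and $R_{g^{-1}}$ invert $L_g$ and $R_g$: concretely $L_{g^{-1}}(L_g(h))=(g^{-1}g)h=h$ for $h\in\C^{s(g)}$, and symmetrically on the other side and for the right translation, so $L_g,R_g$ are bijections. The implication (ii) $\Rightarrow$ (iii) is trivial. For (iii) $\Rightarrow$ (iv) the only point to verify is that the units one wishes to hit actually lie in the codomains of the translations: since $1_{t(g)}\in\C^{t(g)}$ and $1_{s(g)}\in\C_{s(g)}$, surjectivity of $L_g$ produces some $g'\in\C^{s(g)}$ with $gg'=1_{t(g)}$, and surjectivity of $R_g$ produces some $g''\in\C_{t(g)}$ with $g''g=1_{s(g)}$, which is exactly (iv).

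The only step requiring an actual idea rather than unwinding definitions is (iv) $\Rightarrow$ (i), which I regard as the (mild) crux. Here I would run the classical computation showing that a left inverse and a right inverse must coincide, taking care that every composition is defined thanks to the fibre memberships $g'\in\C^{s(g)}$ and $g''\in\C_{t(g)}$: one has $g''=g''\,1_{t(g)}=g''(gg')=(g''g)g'=1_{s(g)}\,g'=g'$. Thus $g''=g'$ is a two-sided inverse of $g$, and its uniqueness follows by the same cancellation; hence $g$ is invertible and (i) holds. This closes the cycle and yields the equivalence of all four conditions.
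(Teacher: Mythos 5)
Your proposal is correct and follows essentially the same route as the paper: the chain (i) $\Rightarrow$ (ii) $\Rightarrow$ (iii) is straightforward, (iii) $\Rightarrow$ (iv) amounts to noting that the relevant units lie in $\Im(L_g)$ and $\Im(R_g)$, and (iv) $\Rightarrow$ (i) is the classical computation $g''=g''1_{t(g)}=g''gg'=1_{s(g)}g'=g'$, which is verbatim the paper's argument. The only difference is that you spell out details (explicit inverses $L_{g^{-1}}$, $R_{g^{-1}}$, fibre bookkeeping, uniqueness) that the paper dismisses as clear.
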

\begin{proof}
Implications $ (i) \Rightarrow (ii) \Rightarrow (iii)$ are clear, and $(iii)\Rightarrow (iv)$ follows by observing that $(iv)$ means $1_{t(g)}\in \Im(L_g)$, $1_{s(g)}\in \Im(R_g)$. Lastly, implication $(iv) \Rightarrow (i)$ follows from elementary abstract algebra: $g''=g''1_{t(g)}=g''gg'=1_{s(g)}g'=g'$.
\end{proof}
\begin{rem}
Note that injectivity of left and right translations corresponds to cancellative properties. For example, injectivity of $L_g$ is equivalent to stating that for any two $h,k\in \C^{s(g)}$, $gh=gk$ implies $h=k$.
\end{rem}

The regular boundary assumption on a Lie category $\C$ has the important consequence that the units dictate where invertible elements can be:
\begin{lem} 
\label{lem:regbdr}
Let $\C$ be a Lie category. For any invertible morphism $g\in \G(\C)$, the morphisms $g,g^{-1}, 1_{s(g)},1_{t(g)}$ must either all be contained in the interior $\Int\C$, or in the boundary $\partial\C$.
\end{lem}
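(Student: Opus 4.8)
The plan is to exploit Proposition \ref{prop:inv}, which tells us that invertibility of $g$ makes the translations $L_g$ and $R_g$ bijections, and to upgrade these bijections to \emph{diffeomorphisms} between the relevant neat fibres. The decisive general fact is that a diffeomorphism between smooth manifolds with boundary must carry interior to interior and boundary to boundary; so once I know $L_g$ and $R_g$ are such diffeomorphisms, I only need to track where the units and $g, g^{-1}$ land under them, and translate the resulting fibrewise boundary conditions into conditions against $\partial\C$ using neatness.

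First I would verify that $L_g\colon \C^{s(g)}\to \C^{t(g)}$ is a diffeomorphism: it is smooth by the standing assumptions, and $L_{g^{-1}}$ is a two-sided smooth inverse, since $L_{g^{-1}}\circ L_g(h)=g^{-1}(gh)=1_{s(g)}h=h$ and symmetrically $L_g\circ L_{g^{-1}}=\id$. The same argument shows $R_g\colon \C_{t(g)}\to \C_{s(g)}$ is a diffeomorphism with inverse $R_{g^{-1}}$. Next come the bookkeeping identities: $L_g(1_{s(g)})=g\,1_{s(g)}=g$ and $R_g(1_{t(g)})=1_{t(g)}\,g=g$, together with the analogous identities for $g^{-1}$, whose units are $1_{s(g^{-1})}=1_{t(g)}$ and $1_{t(g^{-1})}=1_{s(g)}$.

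Then I would invoke neatness. Since the target fibres $\C^x$ and source fibres $\C_x$ are neat submanifolds of $\C$, equation \eqref{eq:fibrebdr} gives $\partial(\C^x)=\C^x\cap\partial\C$ and $\partial(\C_x)=\C_x\cap\partial\C$, so for a point of such a fibre, lying in the fibre's boundary is the same as lying in $\partial\C$. Because $L_g$ preserves the boundary/interior splitting, the identity $L_g(1_{s(g)})=g$ yields $1_{s(g)}\in\partial\C\iff g\in\partial\C$; the identity $R_g(1_{t(g)})=g$ yields $1_{t(g)}\in\partial\C\iff g\in\partial\C$; and applying the same reasoning to the invertible morphism $g^{-1}$ (via $L_{g^{-1}}(1_{t(g)})=g^{-1}$) yields $g^{-1}\in\partial\C\iff 1_{t(g)}\in\partial\C$. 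Chaining these equivalences shows that $g,g^{-1},1_{s(g)},1_{t(g)}$ lie in $\partial\C$ all together or not at all; since $\C=\Int\C\sqcup\partial\C$, the negative case places all four in $\Int\C$, which is the desired dichotomy.

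The only genuinely non-formal ingredient — the step I expect to be the main obstacle to state cleanly — is the topological invariance of the boundary under diffeomorphisms of manifolds with boundary, combined with the reduction of a fibre's boundary to its intersection with $\partial\C$ via neatness. Everything else is an algebraic manipulation of translations and a chase through the resulting equivalences.
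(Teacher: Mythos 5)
Your proposal is correct and follows essentially the same route as the paper's own proof: both exploit that $L_g$, $R_g$, $L_{g^{-1}}$, $R_{g^{-1}}$ are diffeomorphisms between fibres (hence preserve the interior/boundary decomposition), and both use the neatness identity $\partial(\C^x)=\C^x\cap\partial\C$ from the regular boundary assumption to pass from fibrewise boundary membership to membership in $\partial\C$. You merely spell out two steps the paper leaves implicit — the verification that $L_{g^{-1}}$ is a two-sided smooth inverse of $L_g$, and the invocation of boundary invariance under diffeomorphisms — which is fine.
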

\begin{proof}[Proof]
If $\C$ has no boundary then the lemma holds trivially, so suppose $\partial \C\neq \emptyset$. Invertibility of $g$ means $L_g\colon \C^{s(g)}\rightarrow \C^{t(g)}$ is a diffeomorphism, which maps $1_{s(g)}\mapsto g$, so we must have either $1_{s(g)}\in \partial(\C^{s(g)})$ and $g\in \partial(\C^{t(g)})$, or $1_{s(g)}\in \Int(\C^{s(g)})$ and $g\in \Int(\C^{t(g)})$. Regularity of boundary implies $\partial(\C^x)=\C^x\cap \partial \C$ for any $x\in \X$, so we must have either $1_{s(g)}\in \partial\C$ and $g\in \partial\C$, or $1_{s(g)}\in \Int\C$ and $g\in \Int\C$. A similar result is obtained for $1_{t(g)}$ and $g$ using the right translation $R_g$, and similarly for $g^{-1}$ and the units using $L_{g^{-1}}, R_{g^{-1}}$ since $s(g^{-1})=t(g)$.
\end{proof}

The last lemma implies that any Lie groupoid (a Lie category with all morphisms invertible) must have an empty boundary. We also obtain the following two immediate corollaries.

\begin{cor}
\label{cor:regbdr_inv}
For any Lie category $\C\rra \X$, there holds:
\begin{align}
u(\X)\subset \Int\C &\text{ implies } \G(\C)\subset \Int\C,\\
u(\X)\subset \partial\C &\text{ implies }\G(\C)\subset \partial\C.
\end{align}
\end{cor}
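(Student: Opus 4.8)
The plan is to deduce both implications directly from Lemma \ref{lem:regbdr}, which has already done the substantive work: it guarantees that for each invertible $g$, the morphisms $g, g^{-1}, 1_{s(g)}, 1_{t(g)}$ lie entirely in $\Int\C$ or entirely in $\partial\C$. The corollary then follows by observing that the location of the source unit $1_{s(g)} = u(s(g))$ is pinned down by the hypothesis placed on $u(\X)$, and that the lemma propagates this information to $g$ itself.

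More precisely, to establish the first implication I would fix an arbitrary $g \in \G(\C)$ and pass to its source unit $1_{s(g)} = u(s(g))$. Under the hypothesis $u(\X) \subset \Int\C$, this unit lies in the interior. Lemma \ref{lem:regbdr} then forces $g$ to lie in the interior as well, since the two cannot lie in different parts of $\C$. As $g \in \G(\C)$ was arbitrary, this yields $\G(\C) \subset \Int\C$. The second implication is entirely parallel: under $u(\X) \subset \partial\C$ the unit $1_{s(g)}$ lies in the boundary, and Lemma \ref{lem:regbdr} again forces $g \in \partial\C$, so that $\G(\C) \subset \partial\C$.

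Since the dichotomy supplied by Lemma \ref{lem:regbdr} is precisely the ingredient required, there is no real obstacle to overcome; the argument is a one-line specialization of the lemma. The only conceptual point worth keeping in mind is that the two hypotheses are mutually exclusive but not exhaustive -- the units of $\C$ could in principle be split between $\Int\C$ and $\partial\C$ -- and in that remaining situation the corollary makes no assertion. Its content is exactly that \emph{if} all units agree on their stratum, then all invertibles are forced onto that same stratum.
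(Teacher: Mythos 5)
Your proof is correct and matches the paper's intent exactly: the corollary is stated there as an immediate consequence of Lemma \ref{lem:regbdr}, with precisely the argument you give — the hypothesis pins down the stratum of $1_{s(g)}$, and the lemma's dichotomy transfers this to $g$. Nothing further is needed.
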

\begin{cor}
The invertible elements of any Lie monoid are either contained in its  interior, or in its boundary.
\end{cor}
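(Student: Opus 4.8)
The plan is to recognize this as an immediate specialization of Corollary \ref{cor:regbdr_inv} to the case where the object manifold is a single point. Recall that a Lie monoid $M$ is precisely a Lie category $\C\rra\set{*}$ whose object space $\X=\set{*}$ is a singleton. The first step is therefore to identify the image $u(\X)$ of the unit map: since $\X$ consists of a single object $*$, we have $u(\X)=\set{1}$, where $1=u(*)$ is the identity element of the monoid $M$.

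Next I would use the basic fact that, for a smooth manifold with boundary, the interior $\Int M$ and the boundary $\partial M$ are disjoint and cover $M$. Hence the single point $1$ lies in exactly one of the two. This produces the required dichotomy at the level of the unit: either $u(\X)=\set{1}\subset \Int M$, or $u(\X)=\set{1}\subset \partial M$.

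Finally, I would feed each alternative into Corollary \ref{cor:regbdr_inv}. In the first case, $u(\X)\subset \Int M$ yields $\G(\C)\subset \Int M$, so every invertible element lies in the interior; in the second case, $u(\X)\subset \partial M$ yields $\G(\C)\subset \partial M$, so every invertible element lies in the boundary. Since $\G(\C)$ is exactly the set of invertible elements of $M$, this is precisely the claimed statement. The only substantive input — that invertibility forces a morphism and the relevant units to share their interior/boundary status — has already been supplied by Lemma \ref{lem:regbdr} and packaged in Corollary \ref{cor:regbdr_inv}, so there is no genuine obstacle here; the sole point requiring care is the trivial but essential observation that over a singleton base the unit image degenerates to the single identity element, which is what collapses the two implications of Corollary \ref{cor:regbdr_inv} into a clean dichotomy.
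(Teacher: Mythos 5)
Your proposal is correct and matches the paper's intent: the paper presents this as an immediate corollary of Lemma \ref{lem:regbdr} (packaged in Corollary \ref{cor:regbdr_inv}) with no written proof, and the intended argument is exactly your specialization — over a singleton base the unit image is the single identity element, which lies in exactly one of $\Int M$ or $\partial M$, and Corollary \ref{cor:regbdr_inv} then forces all invertibles into that same stratum.
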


An impending question is whether the core $\G(\C)$ of $\C$ is a Lie groupoid. Without additional assumptions this is false:
\begin{ex}
A simple example of a Lie category $\C$ with a regular boundary, whose core $\G(\C)$ is not a manifold, is the disjoint union of the order category on $\R$ and the pair groupoid on $\R$. More concretely, we define 
\begin{align*}
\C&=(-\infty,0)^2\cup \set{(y,x)\in (0,\infty)^2\given x\leq y},\\
\X&=\R\setminus \set 0,
\end{align*}
with the categorical structure induced by the pair groupoid structure on $\R$. 
\begin{figure}[h!]
\includegraphics{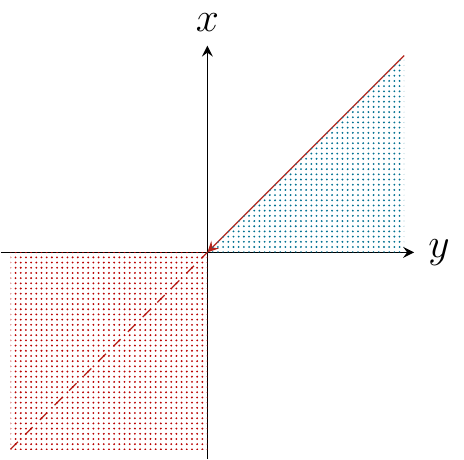}
\end{figure}

\noindent In this case, $\G(\C)=(-\infty,0)^2\cup \set{(x,x)\given x>0}$, which has two components of different dimensions. We have depicted this above, with invertible elements of $\C$ in red, and non-invertible in blue; the units in $\Int \C$ are depicted with a dashed line.

The culprit in the example above is that the units were allowed to be both in the interior and in the boundary.
\end{ex}

\begin{samepage}
\begin{defn}
\label{def:normal_bdry}
A Lie category $\C\rra\X$ is said to have a \textit{normal boundary}, if either of the following holds:\
\begin{enumerate}[label={(\roman*)}]
\item $u(\X)\subset \Int \C$.
\item $\partial\C$ is a wide subcategory of $\C$, i.e.\ $\partial \C$ is a subcategory of $\C$ and $u(\X)\subset\partial\C$.
\end{enumerate}
\end{defn}
\end{samepage}
\begin{rem}
A Lie category without a boundary trivially has a normal boundary. 
\end{rem}

The following result was proved by Charles Ehresmann in his pioneering paper \cite{ehr1959}, where he introduced Lie categories. Ehresmann proved it by implicitly assuming $\partial\C=\emptyset$, and with a somewhat old-school method of local coordinates; the proof we present is coordinate-free and uses the modern language of differentiable manifolds, is conceptually clearer, and holds for categories with a normal boundary.
\begin{thm}
\label{thm:open_inv}
If $\C$ is a Lie category with a normal boundary, then its core $\G(\C)$ is an embedded Lie subcategory of $\C$. More precisely, if $u(\X)\subset \Int\C$, then $\G(\C)$ is open in $\Int \C$, and if $u(\X)\subset \partial \C$, then $\G(\C)$ is open in $\partial \C$.
\end{thm}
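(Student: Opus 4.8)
The plan is to reduce everything, via Lemma \ref{lem:lie_subcat}, to the single assertion that $\G(\C)$ is \emph{open} in the appropriate boundaryless submanifold of $\C$ (namely $\Int\C$ when $u(\X)\subset\Int\C$, and $\partial\C$ when $u(\X)\subset\partial\C$). Indeed, $\G(\C)$ is automatically a wide subcategory of $\C$, and once it is known to be open in $\Int\C$ or in $\partial\C$ it is an embedded boundaryless submanifold of $\C$ on which $s$ and $t$ restrict to submersions — being restrictions of the submersions $s,t$ (resp.\ $\partial s,\partial t$) to an open set — so the no-boundary case of Lemma \ref{lem:lie_subcat} immediately yields that $\G(\C)$ is an embedded Lie subcategory. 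Thus the entire content lies in the two openness claims, and by Corollary \ref{cor:regbdr_inv} we already know $\G(\C)$ sits entirely inside the relevant stratum.

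The analytic heart I would isolate as a lemma: if $g_0\in\G(\C)$ happens to lie in $\Int\C$, then $g_0$ has a whole $\C$-neighborhood consisting of invertibles. By Lemma \ref{lem:regbdr} the morphisms $g_0^{-1},1_{s(g_0)},1_{t(g_0)}$ are then interior as well, so the composable pairs $(g_0,g_0^{-1})$ and $(g_0^{-1},g_0)$ are interior points of $\comp\C$ and the argument unfolds in a boundaryless local picture. I would consider the smooth map $\tilde\Phi(g,h)=(g,gh)$ from $\comp\C$ into the fibred product $\{(a,b)\in\C\times\C\mid t(a)=t(b)\}$, which is a manifold of the same dimension $2\dim\C-\dim\X$ as $\comp\C$ by the transversality argument used for $\comp\C$ (cf.\ Corollary \ref{cor:fibred_prod}). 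Its differential at $(g_0,g_0^{-1})$ sends $(v,w)\mapsto(v,\d m(v,w))$; if this vanishes then $v=0$ and $\d m(0,w)=\d L_{g_0}(w)=0$, forcing $w=0$ because $L_{g_0}$ is a diffeomorphism. Hence $\tilde\Phi$ is a local diffeomorphism there. Composing its local inverse with the smooth section $g\mapsto\bigl(g,u(t(g))\bigr)$, which hits $(g_0,1_{t(g_0)})$ at $g_0$, produces for every $g$ near $g_0$ a morphism $g'\in\C^{s(g)}$ with $gg'=1_{t(g)}$; the mirror map $\check\Phi(h,g)=(hg,g)$, a local diffeomorphism at $(g_0^{-1},g_0)$ by the symmetric computation with $R_{g_0}$, furnishes a left inverse $g''\in\C_{t(g)}$ with $g''g=1_{s(g)}$. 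Proposition \ref{prop:inv} then makes every such $g$ invertible.

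With this lemma in hand the two cases close quickly. If $u(\X)\subset\Int\C$, then by Corollary \ref{cor:regbdr_inv} every invertible lies in $\Int\C$, so the lemma applies at each point of $\G(\C)$ and shows $\G(\C)$ is open in $\C$, hence open in $\Int\C$. If instead $u(\X)\subset\partial\C$, I would first note that the normal-boundary hypothesis makes $\partial\C\rra\X$ a Lie category without boundary: it is a boundaryless manifold, $\partial s,\partial t$ are submersions, $u$ corestricts to $\partial\C$, and $m$ restricts to $\comp{\partial\C}=\comp\C\cap(\partial\C\times\partial\C)$ because $\partial\C$ is a wide subcategory. By Lemma \ref{lem:regbdr} every invertible of $\C$ and its inverse lie in $\partial\C$, so $\G(\C)=\G(\partial\C)$; applying the lemma inside the boundaryless category $\partial\C$, where every invertible is trivially interior, shows $\G(\partial\C)$ is open in $\partial\C$.

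The step I expect to be the main obstacle is the local-diffeomorphism argument at the boundary-free points: one must ensure that $\tilde\Phi$ and $\check\Phi$ genuinely land in manifolds of matching dimension and that the differential computation is run in a local model with no boundary. This is precisely where Lemma \ref{lem:regbdr} and the normal-boundary hypothesis do the essential work, ruling out the mixed-stratum pathology exhibited by the disconnected counterexample preceding the theorem; without them the pairs $(g_0,g_0^{-1})$ could meet $\partial\C$ and the inverse-function argument would break down. Everything else is bookkeeping — identifying fibres, checking composability of the produced inverses — or a direct appeal to Proposition \ref{prop:inv} and Lemma \ref{lem:lie_subcat}.
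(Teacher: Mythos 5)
Your proof is correct and follows essentially the same route as the paper's: the same map $(g,h)\mapsto(g,gh)$ shown to be a local diffeomorphism at $(g_0,g_0^{-1})$ via the same differential computation, the same section $g\mapsto(g,1_{t(g)})$ producing right inverses, the mirror map for left inverses, Proposition \ref{prop:inv} to conclude, and the reduction of the case $u(\X)\subset\partial\C$ to the boundaryless case by viewing $\partial\C$ as a Lie category without boundary. The only cosmetic difference is that you verify the Lie category structure on $\partial\C$ by hand and route the final ``embedded Lie subcategory'' conclusion through Lemma \ref{lem:lie_subcat}, where the paper simply cites that lemma for $\partial\C$ and treats openness as sufficient.
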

\begin{proof}
Consider first the case when $u(\X)\subset \Int\C$. 
We need to show that any $k\in \G(\C)$ has an invertible neighborhood in $\C$; we will do so by showing that $k$ admits a left-invertible and a right-invertible neighborhood. With this motive, define the map
\begin{align}
\vartheta\colon \comp\C\rightarrow {\C\tensor[_t]{\times}{_t}}\,\C,\quad \vartheta(g,h)=(g,gh),
\end{align}
so $\vartheta=(\pr_1,m)$. We claim that this is a local diffeomorphism at the point $(k,k^{-1})$. Note that by equation \ref{eq:regbdr_c2} and a similar result for $\partial({\C\tensor[_t]{\times}{_t}}\,\C)$, we have
\[
(k,k^{-1})\in\Int(\comp\C)\quad\text{ and }\quad(k,1_{t(k)})=\vartheta(k,k^{-1})\in \Int({\C\tensor[_t]{\times}{_t}}\,\C),
\]
so that it is enough to show $\d\vartheta_{(k,k^{-1})}$ is an isomorphism, by virtue of the inverse map theorem (for boundaryless manifolds). For dimensional reasons, it is enough to check $\vartheta$ is an immersion at $(k,k^{-1})$.

Suppose $\d\vartheta_{(k,k^{-1})}(v,w)=0$ for some $(v,w)\in T_{(k,k^{-1})}(\comp\C)$. The identity 
$
\d \vartheta(v,w)=(v,\d m(v,w))
$ implies $v=0$, so we obtain
\begin{align*}
\d m_{(k,k^{-1})}(v,w)=\d (L_k)_{k^{-1}}(w).
\end{align*}
Since $L_k$ is a diffeomorphism, we conclude $w=0$, hence $\vartheta$ is a local diffeomorphism at $(k,k^{-1})$, i.e.\ there is a neighborhood $U$ of $(k,k^{-1})$ which is mapped diffeomorphically onto a neighborhood $V$ of $(g,1_{t(g)})$.

Now note that $\C$ embeds into ${\C\tensor[_t]{\times}{_t}}\,\C$, by the map
$
\lambda(g)=(g,1_{t(g)}),
$
so the set $\vartheta^{-1}(V\cap\lambda(\C))$ is diffeomorphic to $V\cap \lambda(\C)$, and consists of pairs $(g,g')\in U$ such that $gg'=1_{t(g)}$. In other words, $\lambda^{-1}(V)$ is a neighborhood of $k$, elements of which have right inverses. 

Similarly, we can show that the map $\tilde\vartheta\colon \comp\C\rightarrow {\C\tensor[_s]{\times}{_s}}\,\C, (g,h)\mapsto(gh,h)$ is a local diffeomorphism at $(k^{-1},k)$, and use the embedding $\tilde\lambda\colon \C\rightarrow {\C\tensor[_s]{\times}{_s}}\,\C$, $\tilde\lambda(g) = (1_{s(g)},g)$ to obtain a neighborhood $\tilde\lambda^{-1}(\tilde V)$ of $k$, elements of which have left inverses. To conclude, note that Proposition \ref{prop:inv} guarantees $\lambda^{-1}(V)\cap \tilde\lambda^{-1}(\tilde V)$ is an invertible neighborhood of $k$.

Finally, for the case $u(\X)\subset \partial\C$, just note that normality of the boundary can be used with Lemma \ref{lem:lie_subcat} to conclude that $\partial\C$ is an embedded Lie subcategory of $\C$ without boundary, so we can apply the previous case to $\partial \C$.
\end{proof}

Given a morphism $F\colon \C\rightarrow \D$ between Lie categories with normal boundaries, we have $F(\G(\C))\subset \G(\D)$ by functoriality, so we may define $\G(F)=F|_{\G(\C)}$. The map $\G\colon \mathbf{LieCat_\partial}\rightarrow\mathbf{LieGrpd}$ defines a functor from the category of Lie categories with normal boundary to the category of Lie groupoids without boundary, which is easily seen to be right adjoint to the inclusion functor $\mathbf{LieGrpd}\hookrightarrow \mathbf{LieCat_\partial}$. 

Using the last theorem we can also show that the universal property of the core extends to the differentiable setting. We note that more consequences of the Theorem \ref{thm:open_inv} will be explored in upcoming sections.
\begin{cor}
Let $\C$ be a Lie category with a normal boundary, and let $\mathcal H\subset \C$ be a groupoid that is also a Lie subcategory of $\C$. If any smooth morphism $F\colon \G\rightarrow \C$, defined on a Lie groupoid $\G$, factors uniquely through $\mathcal H$, then $\mathcal H=\G(\C)$. 
\end{cor}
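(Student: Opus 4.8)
The plan is to prove the two inclusions $\mathcal H\subseteq\G(\C)$ and $\G(\C)\subseteq\mathcal H$ separately, with the first being essentially formal and the second being where the hypothesized universal property does its work, and then to upgrade the resulting equality of underlying sets to an equality of Lie categories using the embeddedness of $\G(\C)$ furnished by Theorem \ref{thm:open_inv}.

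The inclusion $\mathcal H\subseteq\G(\C)$ I would settle by a purely categorical argument, using no smoothness at all. Since $\mathcal H$ is a subcategory of $\C$, the inclusion is a functor, so identities and inverses computed inside $\mathcal H$ agree with those computed in $\C$. Hence every morphism $h$ of the groupoid $\mathcal H$ possesses an inverse in $\mathcal H$ which is simultaneously an inverse in $\C$, whence $h\in\G(\C)$.

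For the reverse inclusion I would feed the inclusion $\iota_{\G}\colon\G(\C)\hookrightarrow\C$ into the universal property. First I must check it is a legitimate test morphism: by Theorem \ref{thm:open_inv}, $\G(\C)$ is an embedded Lie subcategory of $\C$, and since all of its morphisms are invertible and its boundary is empty (as any Lie groupoid must have, by the consequence of Lemma \ref{lem:regbdr}), it is a genuine Lie groupoid and $\iota_{\G}$ is a smooth morphism. The universal property then supplies a smooth morphism $\phi\colon\G(\C)\to\mathcal H$ with $\iota_{\mathcal H}\circ\phi=\iota_{\G}$. Reading this identity on underlying points, where $\iota_{\G}$ and $\iota_{\mathcal H}$ are both literal set inclusions, forces $\phi(g)=g\in\mathcal H$ for every $g\in\G(\C)$, so $\G(\C)\subseteq\mathcal H$. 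Note that only the \emph{existence} of the factorization is used here, not its uniqueness.

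Combining the two inclusions yields $\mathcal H=\G(\C)$ as subsets, hence as subcategories of $\C$, and it remains to see that the smooth structures agree. At this point $\phi$ is a smooth bijection that is the identity on points, and its set-theoretic inverse is the corestriction to $\G(\C)$ of the smooth inclusion $\iota_{\mathcal H}\colon\mathcal H\to\C$; this corestriction is smooth precisely because $\G(\C)$ is \emph{embedded} in $\C$. Thus $\phi$ is a diffeomorphism and $\mathcal H=\G(\C)$ as Lie categories. I expect this last upgrade to be the only real obstacle: the hypothesis provides merely an injective immersion $\mathcal H\hookrightarrow\C$, so one cannot a priori conclude that an immersed submanifold sharing its point set with $\G(\C)$ carries the same smooth structure; it is exactly the embeddedness of $\G(\C)$ from Theorem \ref{thm:open_inv} that rescues the argument by guaranteeing smoothness of $\phi^{-1}$.
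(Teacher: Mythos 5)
Your proof is correct and follows essentially the same route as the paper: $\mathcal H\subseteq\G(\C)$ by functoriality of the inclusion (functors preserve isomorphisms), and $\G(\C)\subseteq\mathcal H$ by feeding the smooth inclusion $\G(\C)\hookrightarrow\C$, legitimized by Theorem \ref{thm:open_inv}, into the universal property and using only the existence of the factorization. Your closing step upgrading the set-theoretic equality to an agreement of smooth structures, via smoothness of the corestriction into the \emph{embedded} submanifold $\G(\C)$, is a worthwhile refinement that the paper's proof leaves implicit, but it is an addendum to the same argument rather than a different one.
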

\begin{proof}
The inclusion $\mathcal H\subset \G(\C)$ is an easy consequence of functoriality of $\mathcal H\hookrightarrow \C$, since functors map isomorphisms to isomorphisms. For the converse inclusion, we pick the smooth map $F\colon \G(\C)\hookrightarrow \C$ and now the existence of $\bar F\colon \G(\C)\rightarrow \mathcal H$ such that $F=\iota\circ \bar F$, ensures that if  $g\in \G(\C)$, then $g=\bar F(g)\in \mathcal H$.
\end{proof}

\section{Lie algebroids of a Lie category}
The construction of a Lie algebroid using left-invariant (or right-invariant) vector fields on a Lie groupoid readily generalizes to Lie categories; however, we no longer have a canonical isomorphism between the two algebroids, previously given by inversion map. Below, we present the main idea of the construction (\ref{defn:left_inv} through \ref{prop:lie_alg}), mainly to fix notation; we omit some details which can be found in standard Lie groupoid references.

\begin{defn}
\label{defn:left_inv}
A \textit{left-invariant} vector field on a Lie category $\C\rra \X$ is a vector field $X\in\vf(\C)$, which is tangent to $t$-fibres, i.e.\ $X\in\Gamma^\infty(\ker \d t)$, and left-invariant, i.e.\ $\d(L_g)_h(X_h)=X_{gh}$ for all $(g,h)\in \comp\C$. Denote by $\vf^L(\C)$ the vector space of left-invariant vector fields on $\C$.
\end{defn}

\begin{lem}
\label{lem:closed_lie_bracket}
Let $\C$ be a Lie category. The vector space $\vf^L(\C)$ is closed under the Lie bracket, and canonically isomorphic to the vector space $\Gamma^\infty(A^L(\C))$ of sections of the vector bundle $A^L(\C)=u^*(\ker\d t)$ over $\X$.
\end{lem}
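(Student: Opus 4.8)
The statement asserts two things about $\vf^L(\C)$: that it is closed under the Lie bracket, and that it is canonically isomorphic (as a vector space) to $\Gamma^\infty(u^*\ker\d t)$. The key structural fact is that a left-invariant vector field is rigidly determined by its restriction along the unit manifold $u(\X)$. Let me sketch both parts.

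Let me set up the isomorphism first since it's the more foundational claim.

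**The isomorphism.**

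The plan is to construct the isomorphism $\vf^L(\C) \to \Gamma^\infty(A^L(\C))$ by restriction along the units, and its inverse by left-translation, then verify the two are mutually inverse.

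First I would define the forward map. Given $X \in \vf^L(\C)$, we have $X \in \Gamma^\infty(\ker\d t)$, so the pullback section $u^*X$, defined by $(u^*X)_x = X_{u(x)} = X_{1_x} \in \ker\d t_{1_x} = (u^*\ker\d t)_x$, is a smooth section of $A^L(\C) = u^*(\ker\d t)$ over $\X$. This gives a linear map $r\colon \vf^L(\C) \to \Gamma^\infty(A^L(\C))$, $r(X) = u^*X$; smoothness of $u^*X$ follows from smoothness of $X$ and of $u$ (an embedding, by Remark \ref{rem:units_embedded}).

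Next I would construct the inverse. Given a section $\alpha \in \Gamma^\infty(A^L(\C))$, so that $\alpha_x \in \ker\d t_{1_x}$ for each $x$, define a candidate vector field $X^\alpha$ on $\C$ by the forced left-invariance formula
\begin{equation}
\label{eq:recover_lvf}
X^\alpha_g = \d(L_g)_{1_{s(g)}}\bigl(\alpha_{s(g)}\bigr), \qquad g \in \C.
\end{equation}
This is well defined because $1_{s(g)} \in \C^{s(g)}$ and $L_g$ maps $\C^{s(g)} \to \C^{t(g)}$, carrying $1_{s(g)} \mapsto g$, so the image lands in $T_g\C^{t(g)} = \ker\d t_g$; hence $X^\alpha \in \Gamma(\ker\d t)$ as a (set-theoretic) section. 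I would then check left-invariance of $X^\alpha$ directly: for $(g,h)\in\comp\C$, the cocycle identity $L_g\circ L_h = L_{gh}$ on the appropriate fibres, differentiated at $1_{s(h)} = 1_{s(gh)}$, gives $\d(L_g)_h(X^\alpha_h) = \d(L_g)_h\,\d(L_h)_{1_{s(h)}}(\alpha_{s(h)}) = \d(L_{gh})_{1_{s(gh)}}(\alpha_{s(gh)}) = X^\alpha_{gh}$.

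**The main obstacle: smoothness of $X^\alpha$.**

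The step I expect to require the most care is showing that $X^\alpha$, defined fibrewise by \eqref{eq:recover_lvf}, is actually \emph{smooth} on $\C$ rather than merely a rough section. The issue is that formula \eqref{eq:recover_lvf} involves the derivative of the translation $L_g$ at the unit, with $g$ itself varying, so one cannot read off smoothness pointwise. The standard resolution, which I would follow from the Lie groupoid references, is to express $X^\alpha$ as the image of a genuinely smooth object under a smooth bundle map. Concretely, first extend $\alpha$ to the smooth section $\hat\alpha \in \Gamma^\infty(\pr_2^!\ker\d t)$ over $\comp\C$ whose value at $(g,h)$ is $\alpha_{s(h)} \in \ker\d t_h$ sitting in the second factor — using that $s(h)=t(h')$ patterns make $h\mapsto \alpha_{s(h)}$ a smooth lift through the unit embedding $u$ composed with $s$. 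Then observe that $X^\alpha = \d m \circ (\text{zero section}, \hat\alpha) \circ \sigma$ for the smooth section $\sigma(g) = (g, 1_{s(g)})$ of the map $\comp\C \to \C$; since $m$, $u$, $s$ are all smooth and $\d m$ is a smooth bundle morphism $T\comp\C \to T\C$, the composite is smooth. This realizes $X^\alpha$ as the pushforward under the (smooth) composition map of a smooth section, which is the clean way to dispatch the smoothness worry.

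Finally I would check $r$ and $\alpha \mapsto X^\alpha$ are mutually inverse: evaluating \eqref{eq:recover_lvf} at $g = 1_x$ gives $X^\alpha_{1_x} = \d(L_{1_x})_{1_x}(\alpha_x) = \alpha_x$ since $L_{1_x} = \id$ on $\C^x$, so $r(X^\alpha) = \alpha$; conversely, for $X \in \vf^L(\C)$, left-invariance gives $X_g = \d(L_g)_{1_{s(g)}}(X_{1_{s(g)}}) = X^{r(X)}_g$, so $X^{r(X)} = X$. Both maps are manifestly linear, giving the canonical vector space isomorphism.

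**Closure under the bracket.**

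For the first assertion, I would argue that $\vf^L(\C)$ is closed under $[\cdot,\cdot]$ in two moves. First, each $X \in \vf^L(\C)$ is tangent to the $t$-fibres, and since these fibres $\C^x$ are (neat) submanifolds foliating $\C$, the Lie bracket of two vector fields tangent to a common foliation is again tangent to it; hence $[X,Y] \in \Gamma^\infty(\ker\d t)$. Second, left-invariance is preserved: for fixed $g$ the left translation $L_g\colon \C^{s(g)} \to \C^{t(g)}$ is a diffeomorphism, and the left-invariance condition $\d(L_g)_h(X_h) = X_{gh}$ says precisely that $X$ is $L_g$-related to itself along the fibre; since $L_g$-relatedness is preserved under the Lie bracket (naturality of the bracket under pushforward by a diffeomorphism, $\d(L_g)[X,Y] = [\d(L_g)X, \d(L_g)Y]$), it follows that $[X,Y]$ is again left-invariant. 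Thus $[X,Y] \in \vf^L(\C)$. This part is essentially identical to the Lie groupoid case and requires no new ideas beyond noting that left-invariance is a fibrewise, $L_g$-relatedness condition that interacts well with the bracket.
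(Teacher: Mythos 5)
Your proposal follows essentially the same route as the paper's proof: the isomorphism is restriction to the units, its inverse is the left-invariant extension $\alpha^L(g)=\d(L_g)_{1_{s(g)}}(\alpha_{s(g)})$, and smoothness is dispatched exactly as in the paper by realizing $\alpha^L$ as $\d m\circ\tau^\alpha$, where $\tau^\alpha(g)=(0_g,\alpha_{1_{s(g)}})$ is a smooth $T(\comp\C)$-valued map along $g\mapsto (g,1_{s(g)})$; closure under the bracket is argued via relatedness, as in the paper.

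One assertion in your bracket-closure step must be corrected, although the argument survives the correction. You claim that $L_g\colon\C^{s(g)}\to\C^{t(g)}$ is a diffeomorphism; in a Lie category this holds only when $g$ is invertible (Proposition \ref{prop:inv}), and the failure of translations to be diffeomorphisms is precisely the phenomenon the paper's rank theory is built to measure. Consequently the pushforward $\d(L_g)X$ of a vector field along $L_g$ is not defined for general $g$, and the identity $\d(L_g)[X,Y]=[\d(L_g)X,\d(L_g)Y]$ you invoke as justification is not available. What saves the step is that the fact you actually need is weaker and true for an arbitrary smooth map $F$: if $X$ is $F$-related to $X'$ and $Y$ is $F$-related to $Y'$, then $[X,Y]$ is $F$-related to $[X',Y']$. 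Left-invariance says exactly that $X|_{\C^{s(g)}}$ is $L_g$-related to $X|_{\C^{t(g)}}$, and (as you correctly note) tangency of $X,Y$ to the $t$-fibres identifies $[X,Y]|_{\C^x}$ with the bracket of the restrictions; the relatedness lemma for smooth maps then gives $\d(L_g)_h([X,Y]_h)=[X,Y]_{gh}$. This formulation, with no invertibility assumption on $g$, is the one the paper uses.
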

\begin{proof}
Closedness under the Lie bracket is a consequence of the fact that if $X,Y$ are $L_g$-related to themselves when restricted to appropriate fibres, so is $[X,Y]$. The canonical isomorphism $\mathrm{ev}\colon\vf^L(\C)\rightarrow \Gamma^\infty(A^L(\C))$ is given by restriction to the units, and its inverse $\mathrm{ev}^{-1}$ maps any section $\alpha\in \Gamma^\infty(A^L(\C))$ to its left-invariant extension $\alpha^L$, given as
$
\alpha^L(g)=\d(L_g)_{1_{s(g)}}(\alpha_{s(g)}),
$
which is a smooth section $\C\rightarrow T\C$ since it can be realized as the composition $\alpha^L=\d m\circ \tau^\alpha$, where $\tau^\alpha\colon \C\rightarrow T(\comp\C)$ is given by $g\mapsto (0_g,\alpha_{1_{s(g)}}).$
\end{proof}
\begin{defn}
The \textit{left Lie algebroid} of a Lie category $\C\rra \X$ is the vector bundle $A^L(\C)\rightarrow \X$, endowed with the Lie bracket $[\cdot,\cdot]$ on its sections as induced by the isomorphism $\mathrm{ev}$, together with the \textit{anchor map} $\rho^L\colon A^L(\C)\rightarrow T\X$, $\rho^L=\d s|_{A^L(\C)}$.
\end{defn}
Similarly, we define the \textit{right Lie algebroid} of a Lie category $\C$ as the vector bundle $A^R(\C)=u^*(\ker \d s)$, which as a vector space is isomorphic to the space $\vf^R(\C)$ of right-invariant vector fields, and its anchor map is defined as $\rho^R=\d t|_{A^R(\C)}$.

The proof of the next proposition is again the same as with the Lie groupoid case, so we omit it. Again, an analogous result holds for the right Lie algebroid.
\begin{prop}
\label{prop:lie_alg}
The left Lie algebroid $A^L(\C)$ of a Lie category $\C\rra\X$ satisfies:
\begin{enumerate}[label={(\roman*)}]
\item $\rho^L$ preserves the brackets, i.e.\ $\rho^L[\alpha,\beta]=[\rho^L\alpha,\rho^L\beta]_{T\X}$,
\item Leibniz rule: $[\alpha,f\beta]=f[\alpha,\beta]+\rho^L(\alpha)(f)\beta$,
\end{enumerate}
for all $\alpha,\beta\in \Gamma^\infty(A^L(\C))$ and $f\in C^\infty(\X).$
\end{prop}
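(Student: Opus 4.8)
The plan is to transport both identities across the isomorphism $\mathrm{ev}\colon \vf^L(\C)\xrightarrow{\sim}\Gamma^\infty(A^L(\C))$ of Lemma \ref{lem:closed_lie_bracket}, so that they become statements about left-invariant vector fields and their projections along $s$. The single fact that makes everything work is that the left-invariant extension $\alpha^L$ of any $\alpha\in\Gamma^\infty(A^L(\C))$ is $s$-related to the vector field $\rho^L(\alpha)$ on $\X$; that is, $\d s_g(\alpha^L_g)=\rho^L(\alpha)_{s(g)}$ for every $g\in\C$. To establish this I would use that $s\circ L_g=s$ holds wherever $L_g$ is defined, since $s(gh)=s(h)$, whence $\d s_g\circ \d(L_g)_{1_{s(g)}}=\d s_{1_{s(g)}}$ on $T_{1_{s(g)}}\C^{s(g)}$; applying this to $\alpha_{s(g)}\in(\ker\d t)_{1_{s(g)}}$ and recalling $\rho^L=\d s|_{A^L(\C)}$ gives the claim. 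Note that $s$ is a surjective submersion -- including along $\partial\C$, by the regular boundary assumption -- so the projected field $\rho^L(\alpha)$ on the boundaryless manifold $\X$ is the unique vector field to which $\alpha^L$ is $s$-related.

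For (i), I would invoke the naturality of the Lie bracket under related maps: since $\alpha^L$ is $s$-related to $\rho^L(\alpha)$ and $\beta^L$ is $s$-related to $\rho^L(\beta)$, the field $[\alpha^L,\beta^L]$ is $s$-related to $[\rho^L(\alpha),\rho^L(\beta)]_{T\X}$. On the other hand, $\vf^L(\C)$ is closed under the bracket (Lemma \ref{lem:closed_lie_bracket}) and the bracket on sections is defined through $\mathrm{ev}$, so $[\alpha^L,\beta^L]=[\alpha,\beta]^L$, which by the displayed relatedness is $s$-related to $\rho^L([\alpha,\beta])$. Uniqueness of the $s$-projection then forces $\rho^L([\alpha,\beta])=[\rho^L(\alpha),\rho^L(\beta)]_{T\X}$, which is exactly (i).

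For (ii), I would first record the scalar behavior of the extension: for $f\in C^\infty(\X)$ one has $(f\beta)^L=(f\circ s)\,\beta^L$, directly from linearity of $\d(L_g)_{1_{s(g)}}$ and $(f\beta)_{s(g)}=f(s(g))\,\beta_{s(g)}$. The Leibniz rule for the bracket of vector fields then gives
\[
[\alpha^L,(f\circ s)\,\beta^L]=(f\circ s)\,[\alpha^L,\beta^L]+\alpha^L(f\circ s)\,\beta^L.
\]
Using $s$-relatedness of $\alpha^L$, the coefficient $\alpha^L(f\circ s)$ evaluates at $g$ to $\d f_{s(g)}(\rho^L(\alpha)_{s(g)})=\big(\rho^L(\alpha)(f)\big)(s(g))$, so it equals $(\rho^L(\alpha)(f))\circ s$. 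Rewriting both terms on the right via $(f\circ s)\,\gamma^L=(f\gamma)^L$ identifies the right-hand side with $\big(f[\alpha,\beta]+\rho^L(\alpha)(f)\,\beta\big)^L$, and injectivity of $\mathrm{ev}^{-1}$ yields (ii).

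The only genuine content lies in the first paragraph: establishing the $s$-relatedness of $\alpha^L$ with $\rho^L(\alpha)$ and checking that the submersion property of $s$ (valid up to the boundary) guarantees uniqueness of the projection. Once that is in place, (i) and (ii) are formal consequences of the naturality of the bracket under related maps and of its Leibniz rule, exactly as in the Lie groupoid setting; the analogous statement for $A^R(\C)$ follows by replacing $s$ with $t$, $L_g$ with $R_g$, and $\ker\d t$ with $\ker\d s$ throughout.
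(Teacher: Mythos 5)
Your proposal is correct and is precisely the standard argument from Lie groupoid theory that the paper invokes when it omits the proof ("the same as with the Lie groupoid case"): establish that $\alpha^L$ is $s$-related to $\rho^L(\alpha)$ via $s\circ L_g=s$, then deduce (i) from naturality of the bracket under related vector fields and (ii) from $(f\beta)^L=(f\circ s)\,\beta^L$ together with the Leibniz rule, concluding through the isomorphism $\mathrm{ev}$. All steps, including the boundary considerations, check out.
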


Any morphism $F\colon \C\rightarrow \D$ over $\id_\X$ of Lie categories over the same object manifold $\X$, induces morphisms between their left and right Lie algebroids (respectively), denoted by
\begin{align*}
F_*^L\colon A^L(\C)\rightarrow A^L(\D)\quad\text{and}\quad F_*^R\colon A^R(\C)\rightarrow A^R(\D),
\end{align*}
and defined on respective sections of $A^L(\C)$ and $A^R(\C)$ in the obvious way:
\begin{align*}
\alpha \mapsto \d F\circ \alpha.
\end{align*}
In the case when the object manifolds of $\C$ and $\D$ are not equal and the morphism $F$ does not restrict to a diffeomorphism between the units, we encounter the same complications as in the Lie groupoid case; this is resolved in the same manner as for Lie groupoids, and since we will not be needing this more general result, we point the reader to \cite[Chapter 4.3]{mackenzie} for details. The upshot is that $A^L$ and $A^R$ are functors from the category $\mathbf{LieCat}$ of Lie categories to the category $\mathbf{LieAlgd}$ of Lie algebroids.

As mentioned, we do not have a canonical isomorphism between the left and right Lie algebroid of a Lie category, which is given in a Lie groupoid by the inversion map. However, we have the following result, and we will later encounter a related one when studying extensions of categories to groupoids (see section \ref{sec:ext}).
\begin{prop}
Let $\C\rra \X$ be a Lie category. If the units of $\C$ are contained in the interior of $\C$, i.e.\ $u(\X)\subset \Int\C$, then the left and right Lie algebroids of $\C$ are isomorphic to the Lie algebroid of its core $\G(\C)$.
\end{prop}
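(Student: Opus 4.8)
The plan is to exploit the fact that, under the hypothesis $u(\X)\subset\Int\C$, Theorem \ref{thm:open_inv} makes the core $\G(\C)$ an \emph{open} Lie subcategory of $\C$ containing all the units, so that the entire Lie algebroid data, being determined in an arbitrarily small neighborhood of $u(\X)$, is the same whether read off from $\C$ or from $\G(\C)$.

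First I would record the structural consequence of Theorem \ref{thm:open_inv}: since $u(\X)\subset\Int\C$, the core $\G(\C)$ is open in $\Int\C$, hence open in $\C$, and it contains $u(\X)$ because units are always invertible. In particular the source and target maps of the Lie groupoid $\G(\C)$ are the restrictions of $s,t$, and for each $x\in\X$ one has $T_{1_x}\G(\C)=T_{1_x}\C$. Consequently $\ker\d t$ and $\ker\d s$ agree along $u(\X)$ whether computed in $\C$ or in $\G(\C)$, which identifies the underlying vector bundles $A^L(\C)=u^*(\ker\d t)=A^L(\G(\C))$ and $A^R(\C)=u^*(\ker\d s)=A^R(\G(\C))$ over $\X$. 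The anchor maps match immediately, being the respective restrictions of $\d s$ and $\d t$ to the same bundles.

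The only point requiring an argument is that the Lie brackets coincide. Here I would observe that the left translation $L_g$ of $\G(\C)$ is simply the restriction of the left translation $L_g$ of $\C$, because $\G(\C)^{s(g)}$ is open in $\C^{s(g)}$ and contains $1_{s(g)}$; by the formula $\alpha^L(g)=\d(L_g)_{1_{s(g)}}(\alpha_{s(g)})$ from Lemma \ref{lem:closed_lie_bracket}, the left-invariant extension over $\C$ of a section $\alpha$ restricts on $\G(\C)$ to its left-invariant extension over $\G(\C)$. Since the Lie bracket of vector fields is a local operator and $u(\X)$ lies in the open set $\G(\C)$, evaluating $[\alpha^L,\beta^L]$ at the units yields the same result computed in $\C$ or in $\G(\C)$; through the isomorphism $\mathrm{ev}$ this means the induced brackets on $\Gamma^\infty(A^L(\C))$ and $\Gamma^\infty(A^L(\G(\C)))$ agree. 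Thus $A^L(\C)\cong A^L(\G(\C))$ as Lie algebroids, and the same reasoning with $R_g$ and right-invariant fields gives $A^R(\C)\cong A^R(\G(\C))$.

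To finish, I would invoke the one groupoid-specific fact: for the Lie groupoid $\G(\C)$ the inversion map induces the canonical isomorphism $A^L(\G(\C))\cong A^R(\G(\C))$, whose common value is what is meant by the Lie algebroid of the core. Composing the identifications above then shows that $A^L(\C)$ and $A^R(\C)$ are both isomorphic to the Lie algebroid of $\G(\C)$. The main (and essentially only) obstacle is the bracket comparison, which reduces, via locality of the Lie bracket together with the openness of $\G(\C)$ in $\C$, to the elementary observation that left- and right-invariant extensions in $\C$ restrict to the corresponding extensions in $\G(\C)$.
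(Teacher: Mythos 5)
Your proposal is correct and follows essentially the same route as the paper: invoke Theorem \ref{thm:open_inv} to see that $\G(\C)$ is open in $\C$, then compose the chain $A^L(\C)\cong A^L(\G(\C))\cong A^R(\G(\C))\cong A^R(\C)$, with the outer isomorphisms induced by the open inclusion and the middle one by inversion in the groupoid. The only difference is that you spell out the details the paper leaves implicit (identification of the bundles and anchors along $u(\X)$, and the bracket comparison via restriction of invariant extensions and locality of the Lie bracket), which is a sound elaboration rather than a different argument.
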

\begin{proof}
By Theorem \ref{thm:open_inv}, $\G(\C)$ is open in $\C$, so we get the following chain of isomorphisms of Lie algebroids:
\begin{align}
\label{eq:chain_iso}
A^L(\C)\cong A^L(\G(\C))\cong A^R(\G(\C))\cong A^R(\C),
\end{align}
where the first and last isomorphism are induced by the inclusion $\G(\C)\hookrightarrow \C$, and the isomorphism in the middle is induced by inversion in the groupoid $\G(\C)$.
\end{proof}

\begin{rem}
Note that if $\C$ has a normal boundary and $u(\X)\subset \partial\C$, the Lie algebroid $A(\G(\C))$ of the core will always fail to be isomorphic to the two Lie algebroids of $\C$, since the rank of the vector bundle $A(\G(\C))$ is one less than the rank of $A^L(\C)$ and $A^R(\C)$. This is demonstrated by the following two examples: 
\begin{enumerate}[label={(\roman*)}]
\item The two Lie algebras of the Lie monoid $M=\mathbb H^n$ are isomorphic (as vector spaces) to $A_L(M)\cong \R^n\cong A_R(M)$, whereas $A(G(M))\cong\R^{n-1}$ since the core of $M$ is $G(M)=\R^{n-1}$.
\item Consider the order category $\C=\set{(y,x)\in \R^2\given x\leq y}\rra \R$ from Example \ref{ex:order_cat}. Notice that its core $\G(\C)$ is just the base groupoid over $\R$, hence its Lie algebroid is the zero bundle $A(\G(\C))=\R\times \set 0$. On the other hand, the left and right Lie algebroid of $\C$ are both isomorphic to $T\R$.
\end{enumerate}
\end{rem}

\section{Ranks of morphisms}
What follows can be seen as a natural generalization of the usual notion of rank from linear algebra (see Example \ref{ex:ranks} (iii)).
\begin{defn}
\label{defn:rank}
Let $\C\rra\X$ be a Lie category and let $g\in\C$. The \textit{left rank} and \textit{right rank} of $g$ are defined as
\begin{align*}
\rankl(g)&=\rank\d(L_g)_{1_{s(g)}},\\
\rankr(g)&=\rank\d(R_g)_{1_{t(g)}}.
\end{align*}
If the left and right rank of a morphism $g$ are equal, we just write $\rank(g)=\rankl(g)=\rankr(g)$ and call this integer the \textit{rank} of $g$.
Moreover, we say $g$ has \textit{full rank}, if its left and right ranks are full, that is, if
\[
\rank(g)=\codim_\C(\X)=\dim\C-\dim \X=:\delta.
\]
In this case, we will sometimes say that $g$ is a \textit{regular} morphism. If $g$ is not regular, we will call it \textit{singular}. 

Finally, we say that $g$ has \textit{constant} left rank, if $\rankl(g)=\rank\d(L_g)_h$ for all $h\in \C^{s(g)}$, and similarly that $g$ has \textit{constant} right rank, if $\rankr(g)=\rank\d(R_g)_h$ for all $h\in \C_{t(g)}$. To avoid ambiguity, we will sometimes write $\rank_\C$ instead of $\mathrm{rank}.$
\end{defn} 
\begin{ex}\
\begin{enumerate}[label={(\roman*)}]
\label{ex:ranks}
\item All invertible morphisms in any Lie category have full and constant rank.
\item In a Lie monoid $M$, the ranks of an element $g\in M$ are just the ranks of $\d(L_g)_e$ and $\d(R_g)_e$. If $M$ is an abelian Lie monoid, then clearly any element of $M$ has equal left and right rank. The example $M=\mathbb H^n$ shows that regularity does not imply invertibility. 

We will later see that regularity and constancy of ranks of all morphisms in a Lie category is ensured whenever dealing with a Lie subcategory of a Lie groupoid, as is with the simple example $\mathbb H^n\hookrightarrow \R^n$ for addition.
\item If $M=\R^{n\times n}$, let us show how the above notion of rank relates with the usual one. If $A\in\R^{n\times n}$, then the usual notion reads $\rank A=\dim \Im A$, when $A$ is seen as the map $\R^n\rightarrow \R^n$. On the other hand, the left rank in Definition \ref{defn:rank} equals:
\[\rankl_M(A)=\dim \Im \d(L_A)_I=\dim \Im L_A,\]
where $L_A\colon \R^{n\times n}\rightarrow \R^{n\times n}$ is the left translation by $A$, and the last equality follows by linearity. Similarly, $\rankr_M(A)=\dim\Im R_A$. Denoting by $E_{ij}$ the matrix with 1 in place $(i,j)$ and zero elsewhere, we have that $AE_{ij}$ has $i$-th column of $A$ in $j$-th column and is zero elsewhere, and $E_{ij}A$ has $j$-th row of $A$ in $i$-th row and is zero elsewhere, so:
\begin{align*}
\Im(L_A)&=\operatorname{Lin}(AE_{ij})_{i,j=1}^n=\set[\Big]{\begin{bmatrix}v_1\dots v_n\end{bmatrix}\given v_i\in \Im(A)},\\
\Im(R_A)&=\operatorname{Lin}(E_{ij}A)_{i,j=1}^n=\set*{\begin{bmatrix}v_1\dots v_n\end{bmatrix}^{\mathsf T}\given v_i\in \Im\left(A^{\mathsf T}\right)}.
\end{align*}
Since $\rank A=\rank A^{\mathsf T}$, it follows that 
\[
\rankl_M(A)=\rankr_M(A)=n\rank(A),
\]
and we see that $A$ is regular if, and only if, $A$ is invertible. 

The above result readily generalizes to arbitrary finite-dimensional vector spaces: if $V$ is a vector space, then
$\rank_{\End(V)}(A)=\dim (V)\rank (A).$ 
Moreover, for any vector bundle $E$, the above result clearly also generalizes to the endomorphism bundle $E^*\otimes E$, so that for any $A\colon E_x\rightarrow E_x$,
\[
\rank_{E^*\otimes E}A=\rank (E)\rank (A).
\]
\item Examples of Lie monoids that do not have coinciding left and right ranks may be found in the context of finite-dimensional unital algebras. As a concrete example, take the algebra $A\subset \R^{2\times 2}$ of upper-diagonal real $2\times 2$ matrices, the canonical basis of which is given by $a = \bigl[ \begin{smallmatrix}1 & 0\\ 0 & 0\end{smallmatrix}\bigr], b = \bigl[ \begin{smallmatrix}0 & 0\\ 0 & 1\end{smallmatrix}\bigr], c = \bigl[ \begin{smallmatrix}0 & 1\\ 0 & 0\end{smallmatrix}\bigr]$. Identifying $T_e A$ with $A$ and noting that left and right translations are linear maps, we can identify $\d(L_g)_e = L_g$ for any $g\in A$, and similarly for the right translations. It is easy to see $\Im(R_a)=\operatorname{Lin}(a)$ and $\Im(L_a)=\operatorname{Lin}(a,c)$ by computing all the products of $a$ with the canonical basis, so we conclude that $2=\rankl_A(a)\neq\rankr_A(a)=1$.
\end{enumerate}
\end{ex}

\begin{prop}[Properties of ranks]
In a Lie category $\C\rra \X$, there holds:
\begin{enumerate}[label={(\roman*)}]
\item The ranks of a composition of composable morphisms $g,h\in\C$ are bounded from above:
\begin{align}
\rankl(gh)&\leq \rankl(h),\\
\rankr(gh)&\leq \rankr(g).
\end{align}
\item The ranks of $g\in \C$ are bounded from below by the ranks of anchors $\rho^L\colon A^L(\C)\rightarrow T\X$ and $\rho^R\colon A^R(\C)\rightarrow T\X$ of Lie algebroids of $\C$:
\begin{align}
\rankl(g)\geq \rank\rho_{s(g)}^L,\\
\rankr(g)\geq \rank\rho_{t(g)}^R.
\end{align}
\end{enumerate}
\end{prop}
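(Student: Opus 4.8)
The plan is to prove both bounds by exploiting the functoriality of the translations already recorded in the excerpt (the covariant functor $g\mapsto L_g$ and the contravariant one $g\mapsto R_g$), combined with the elementary fact that for linear maps one always has $\rank(A\circ B)\leq \rank B$.

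For part (i), I would first note that covariance of left translation gives $L_{gh}=L_g\circ L_h$ as maps $\C^{s(h)}\to\C^{t(g)}$, which is just associativity: $L_g(L_h(k))=g(hk)=(gh)k=L_{gh}(k)$. Since $s(gh)=s(h)$ and $L_h(1_{s(h)})=h$, the chain rule at the unit $1_{s(h)}$ yields
\[
\d(L_{gh})_{1_{s(h)}}=\d(L_g)_h\circ \d(L_h)_{1_{s(h)}}.
\]
Taking ranks and applying $\rank(A\circ B)\leq\rank B$ gives $\rankl(gh)\leq\rankl(h)$ straight from Definition \ref{defn:rank}. The right-rank inequality is entirely symmetric, using the contravariant composition $R_{gh}=R_h\circ R_g$ (the reversed order coming from $k(gh)=(kg)h$) and $R_g(1_{t(g)})=g$.

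For part (ii), the key observation is that the left rank of $g$ and the anchor rank at $s(g)$ are ranks of linear maps issuing from the \emph{same} space. Indeed, the domain of $\d(L_g)_{1_{s(g)}}$ is $T_{1_{s(g)}}\C^{s(g)}=\ker\d t_{1_{s(g)}}=A^L(\C)_{s(g)}$, which is precisely the fibre on which $\rho^L_{s(g)}=\d s|_{A^L(\C)}$ is defined. The link between them is the identity $s\circ L_g=s$ on $\C^{s(g)}$, valid because $s(gh)=s(h)$; differentiating at $1_{s(g)}$ and using $L_g(1_{s(g)})=g$ produces
\[
\rho^L_{s(g)}=\d s_g\circ \d(L_g)_{1_{s(g)}},
\]
so that $\rank\rho^L_{s(g)}\leq\rank\d(L_g)_{1_{s(g)}}=\rankl(g)$. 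The right-hand inequality follows by the same argument applied to $t\circ R_g=t$ on $\C_{t(g)}$ together with $A^R(\C)=u^*(\ker\d s)$ and $\rho^R=\d t|_{A^R(\C)}$.

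I do not anticipate a genuine obstacle: both statements collapse to rank inequalities for composites. The only point demanding care is the bookkeeping of tangent spaces and base points—in particular, verifying that the source and target fibres at the relevant units coincide with the algebroid fibres $A^L(\C)_{s(g)}$ and $A^R(\C)_{t(g)}$, so that the factorizations above are genuinely statements about the anchors $\rho^L,\rho^R$ and not merely about $\d s$ and $\d t$.
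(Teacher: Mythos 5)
Your proposal is correct and follows essentially the same route as the paper: part (i) is the chain rule applied to $L_{gh}=L_g\circ L_h$ (and $R_{gh}=R_h\circ R_g$) at the unit, together with $\rank(A\circ B)\leq\rank B$, and part (ii) is exactly the paper's commutative triangle $s|_{\C^{t(g)}}\circ L_g=s|_{\C^{s(g)}}$, which you differentiate at $1_{s(g)}$ to obtain the factorization $\rho^L_{s(g)}=\d s_g\circ\d(L_g)_{1_{s(g)}}$. Your explicit identification $T_{1_{s(g)}}\C^{s(g)}=\ker\d t_{1_{s(g)}}=A^L(\C)_{s(g)}$ just spells out bookkeeping the paper leaves implicit.
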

\begin{proof}
Since $L_{gh}=L_g\circ L_h$, we have
\[
\rankl(gh)=\rank(\d(L_g)_h\circ\d(L_h)_{1_{s(h)}})\leq \rankl(h),
\]
and similarly for the right translation by using $R_{gh}=R_h\circ R_g$, so $(i)$ follows. For the property $(ii)$ note that the diagram
\[
\begin{tikzcd}[column sep=small]
\C^{s(g)} \arrow[rr, "L_g"] \arrow[rd, "s|_{\C^{s(g)}}"'] &    & \C^{t(g)} \arrow[ld, "s|_{\C^{t(g)}}"] \\
& \X &
\end{tikzcd}
\]
implies $\rankl(g)\geq \rank \d(s|_{\C^{s(g)}})_{1_{s(g)}}=\rank \rho^L_{s(g)}$.
\end{proof}

In the context of Lie groupoids, it is well known that the composition map is a submersion. This is no longer true for Lie categories in general; a counterexample is provided by the Lie monoid $\R^{n\times n}$ for matrix multiplication, where $\d m_{(0,0)}$ is easily seen to be the zero map. However, the following results ensure that $m$ is a submersion in case all morphisms have full and constant rank; as mentioned, this is the case for Lie categories extendable to Lie groupoids, as we will see later in Lemma \ref{lem:extensions}.

\begin{lem}
\label{lem:theta}
Let $\C \rra \X$ be a Lie category and $g\in\C$. The map $L_g$ has full rank at $h\in \C^{s(g)}$ if, and only if, the map 
$\vartheta\colon \comp\C\rightarrow {\C\tensor[_t]{\times}{_t}}\,\C, (g,h)\mapsto(g,gh)$
has full rank at $(g,h)$. In particular, $g$ has full left rank if, and only if, $\vartheta$ has full rank at $(g,1_{s(g)})$.
\end{lem}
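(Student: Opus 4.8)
The plan is to reduce the whole statement to a single computation of the kernel of $\d\vartheta_{(g,h)}$, exploiting the identity $\d\vartheta_{(g,h)}(v,w) = (v, \d m_{(g,h)}(v,w))$ already recorded in the proof of Theorem \ref{thm:open_inv}. First I would note a dimension count: since $s,t$ are submersions, both $\comp\C$ and ${\C\tensor[_t]{\times}{_t}}\,\C$ have dimension $2\dim\C-\dim\X=\dim\C+\delta$, while the $t$-fibres $\C^{s(g)}$ and $\C^{t(g)}$ serving as the domain and codomain of $L_g$ both have dimension $\delta$. Consequently $L_g$ and $\vartheta$ are each maps between equidimensional manifolds, so for both of them \emph{full rank} is the same as the differential being an isomorphism, which in turn is equivalent to the differential being injective. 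It thus suffices to compare the kernels $\ker\d(L_g)_h$ and $\ker\d\vartheta_{(g,h)}$.

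Next I would compute $\ker\d\vartheta_{(g,h)}$ directly. Suppose $\d\vartheta_{(g,h)}(v,w)=0$ for some $(v,w)\in T_{(g,h)}\comp\C$. The identity above forces $v=0$ immediately, being the first component, so the constraint $\d s(v)=\d t(w)$ defining $T_{(g,h)}\comp\C$ gives $\d t(w)=0$, i.e.\ $w\in\ker\d t_h=T_h\C^{s(g)}$. The key observation is that the slice $\set{g}\times\C^{s(g)}$ lies inside $\comp\C$ and the restriction of $m$ to it is exactly $L_g$; hence on the subspace $\set 0\oplus T_h\C^{s(g)}$ of $T_{(g,h)}\comp\C$ the differential of $m$ agrees with $\d(L_g)_h$, so that $\d m_{(g,h)}(0,w)=\d(L_g)_h(w)$. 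Therefore $\d\vartheta_{(g,h)}(v,w)=0$ is equivalent to $v=0$ together with $\d(L_g)_h(w)=0$, which exhibits a linear isomorphism
\[
\ker\d\vartheta_{(g,h)}\xrightarrow{\ \sim\ }\ker\d(L_g)_h,\qquad (0,w)\mapsto w.
\]

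From this the lemma follows at once: $\d\vartheta_{(g,h)}$ is injective if and only if $\d(L_g)_h$ is, and by the equidimensionality observed above this says precisely that $\vartheta$ has full rank at $(g,h)$ if and only if $L_g$ has full rank at $h$. The ``in particular'' clause is then the special case $h=1_{s(g)}$, recalling that $\rankl(g)=\rank\d(L_g)_{1_{s(g)}}$. I expect no serious obstacle here; the argument is essentially bookkeeping with tangent spaces, its only delicate points being the correct description of $T_{(g,h)}\comp\C$ and the restriction identity $m|_{\set g\times\C^{s(g)}}=L_g$. It is worth emphasising that, unlike in the proof of Theorem \ref{thm:open_inv}, no appeal to the inverse map theorem is made, so the reasoning is insensitive to whether $(g,h)$ lies in $\Int\C$ or on $\partial\C$.
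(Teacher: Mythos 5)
Your proof is correct and follows essentially the same route as the paper: both rest on the identity $\d\vartheta_{(g,h)}(v,w)=(v,\d m_{(g,h)}(v,w))$ and the observation that $\d m_{(g,h)}(0,w)=\d(L_g)_h(w)$, reducing everything to injectivity of the two differentials between equidimensional spaces. Your packaging of the two implications as a single kernel isomorphism, and your explicit use of the constraint $\d s(v)=\d t(w)$ to see that $w\in\ker\d t_h$, are just slightly more detailed renderings of the paper's argument.
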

Recall that we have already encountered the map $\vartheta$ in the proof of Theorem \ref{thm:open_inv}. In a Lie groupoid, this map is a bijection with inverse $(g,h)\mapsto (g,g^{-1}h)$, so above lemma guarantees it is a diffeomorphism, which can be used e.g.\ to show that the inversion map is automatically smooth, by realizing it as the composition \[g\mapsto (g,1_{t(g)})\xmapsto{\vartheta^{-1}} (g,g^{-1})\mapsto g^{-1}.\]

An analogous result to Lemma \ref{lem:theta} of course holds for ranks of right translations, using the map $\tilde\vartheta\colon \comp\C\rightarrow {\C\tensor[_s]{\times}{_s}}\,\C$, given by $(g,h)\mapsto (gh,h)$. For instance, the second part of above lemma would then read: $g$ has full right rank if, and only if, $\tilde\vartheta$ has full rank at $(1_{t(g)},g)$. 
\begin{proof}
For the forward implication, note that $\d\vartheta_{(g,h)}(v,w)=(v,\d m_{(g,h)}(v,w))$ holds for all $(v,w)\in T_{(g,h)}\comp\C$, hence $\d\vartheta_{(g,h)}(v,w)=(0,0)$ first implies $v=0$, and since $\d m_{(g,h)}(0,w)=\d (L_g)_h(w)$, we get $w=0$ since $L_g$ has full rank at $h$. For the other direction, note that $\d(L_g)_h(w)=0$ implies $\d\vartheta_{(g,h)}(0,w)=(0,0)$, so the assumption that $\vartheta$ has full rank at $(g,h)$ yields $w=0$.
\end{proof}
\begin{cor}
\label{cor:composition_submersion}
Let $\C\rra \X$ be a Lie category and $g\in\C$. If the map $L_g$ has full rank at $h\in \C^{s(g)}$, then the composition map $m\colon\comp\C\rightarrow \C$ is a submersion at $(g,h)$. Hence if all morphisms have full and constant rank, the composition is submersive.
\end{cor}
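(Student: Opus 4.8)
The plan is to factor the composition map through the map $\vartheta$ of Lemma \ref{lem:theta} and exploit that its codomain is a fibred product whose projection onto the second factor is already a submersion. Writing $\pr_2\colon {\C\tensor[_t]{\times}{_t}}\,\C\rightarrow \C$ for that projection, one has the factorization $m=\pr_2\circ\vartheta$, since $\vartheta(g,h)=(g,gh)$ and $\pr_2(g,gh)=gh=m(g,h)$. I would first record that $\pr_2$ is a submersion: its differential at $(a,b)$ sends $(v,w)\mapsto w$, and given any $w\in T_b\C$ the submersivity of $t$ furnishes a $v\in T_a\C$ with $\d t(v)=\d t(w)$, so $(v,w)\in T_{(a,b)}({\C\tensor[_t]{\times}{_t}}\,\C)$ is a preimage of $w$; hence $\d(\pr_2)$ is onto at every point.

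Next I would invoke Lemma \ref{lem:theta}: since $L_g$ has full rank at $h$, the map $\vartheta$ has full rank at $(g,h)$. Both $\comp\C$ and ${\C\tensor[_t]{\times}{_t}}\,\C$ are fibred products over $\X$ of two copies of $\C$, so $\dim\comp\C=2\dim\C-\dim\X=\dim({\C\tensor[_t]{\times}{_t}}\,\C)$; because source and target have equal dimension, full rank of $\vartheta$ at $(g,h)$ means exactly that $\d\vartheta_{(g,h)}$ is a linear isomorphism. The chain rule applied to the factorization then gives $\d m_{(g,h)}=\d(\pr_2)\circ\d\vartheta_{(g,h)}$, the composite of an isomorphism followed by a surjection, hence surjective. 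Thus $m$ is a submersion at $(g,h)$. I would stress that this argument is purely pointwise and linear-algebraic, so the possible presence of a boundary on $\C$ (and hence the corner structure of $\comp\C$ recorded in \eqref{eq:regbdr_c2}) causes no difficulty here.

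For the final assertion, suppose every morphism has full and constant rank. Then for each $g\in\C$ and every $h\in\C^{s(g)}$ we have $\rank\d(L_g)_h=\rankl(g)=\delta=\dim\C^{s(g)}$, using constancy of the left rank together with fullness, so $L_g$ has full rank at \emph{every} $h$. The first part therefore makes $m$ a submersion at every composable pair, i.e.\ $m$ is submersive. I do not expect a genuine obstacle: the substance of the statement is already carried by Lemma \ref{lem:theta}, and the only points requiring care are the dimension count that promotes ``full rank of $\vartheta$'' to ``$\d\vartheta$ an isomorphism'' and the observation that $\pr_2$ out of the fibred product is a submersion, both of which are immediate once the fibred-product structures are identified.
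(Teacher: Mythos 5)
Your proposal is correct and follows essentially the same route as the paper: factor $m=\pr_2\circ\vartheta$, invoke Lemma \ref{lem:theta} for the full rank of $\vartheta$ at $(g,h)$, and conclude by surjectivity of $\d(\pr_2)$. The only difference is that you spell out two facts the paper leaves implicit -- that $\pr_2$ out of the fibred product is a submersion, and the dimension count $\dim\comp\C=\dim({\C\tensor[_t]{\times}{_t}}\,\C)$ promoting full rank of $\vartheta$ to an isomorphism of tangent spaces -- both of which are sound.
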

\begin{proof}
Differentiating the identity $m=\pr_2\circ\vartheta$  yields 
$
\d m_{(g,h)}=\d(\pr_2)_{(g,gh)}\circ\d\vartheta_{(g,h)},
$
which is a composition of surjective maps by previous lemma and the fact that $\pr_2\colon {\C\tensor[_t]{\times}{_t}}\,\C\rightarrow\C$ is a submersion.
\end{proof}

We now direct our attention to the subsets of $\C$ of morphisms with full rank. Denote by 
\begin{align*}
\rk_r^L(\C)&=\set{g\in \C\given \rankl(g)=r},\\
\rk_r^R(\C)&=\set{g\in \C\given \rankr(g)=r}
\end{align*}
the sets of morphisms with left and right rank equal to $r$, respectively, and furthermore by $\rk_r(\C)=\rk_r^R(\C)\cap \rk_r^L(\C)$ morphisms whose both ranks equal $r$.
\begin{prop}
In any Lie category $\C\rra\X$, the subset $\rk_\delta(\C)\subset\C$ of regular morphisms is open. 
\end{prop}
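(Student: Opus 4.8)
The plan is to reduce the statement to lower semicontinuity of the rank of a single smooth map, namely the map $\vartheta$ from Lemma \ref{lem:theta}, pulled back along a smooth section. First recall that since $s$ and $t$ are submersions, every fibre $\C^x$ and $\C_x$ has dimension $\delta=\dim\C-\dim\X$; hence a morphism $g$ has full left rank exactly when $\d(L_g)_{1_{s(g)}}\colon T_{1_{s(g)}}\C^{s(g)}\to T_g\C^{t(g)}$ is an isomorphism, and likewise for the right rank. The reason one cannot apply semicontinuity directly to $g\mapsto\d(L_g)_{1_{s(g)}}$ is that the source and target of this differential are fibres of \emph{different} bundles as $g$ varies. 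Lemma \ref{lem:theta} circumvents this precisely by encoding full rank of $L_g$ at $h$ as full rank of the single map $\vartheta\colon\comp\C\to {\C\tensor[_t]{\times}{_t}}\,\C$, $(g,h)\mapsto(g,gh)$, at the point $(g,h)$.

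Concretely, I would argue as follows. The domain and codomain of $\vartheta$ are both manifolds (possibly with corners) of dimension $2\dim\C-\dim\X$, so ``full rank at a point'' means that $\d\vartheta$ is an isomorphism there. By lower semicontinuity of the rank of a smooth map (a nonvanishing $r\times r$ minor of $\d\vartheta$ in local charts persists on a neighbourhood, an argument insensitive to boundary or corner points since the differential is defined on the entire tangent space at each point), the set $U^L\subset\comp\C$ on which $\vartheta$ has full rank is open. Next consider the smooth section $\sigma^L\colon\C\to\comp\C$, $\sigma^L(g)=(g,1_{s(g)})=(g,u(s(g)))$, which is well defined since $t(u(s(g)))=s(g)$. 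By the last part of Lemma \ref{lem:theta}, $g$ has full left rank if and only if $\vartheta$ has full rank at $\sigma^L(g)$, whence $\rk_\delta^L(\C)=(\sigma^L)^{-1}(U^L)$ is open.

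Running the symmetric argument with $\tilde\vartheta\colon\comp\C\to{\C\tensor[_s]{\times}{_s}}\,\C$, $(g,h)\mapsto(gh,h)$, and the smooth section $\sigma^R\colon g\mapsto(1_{t(g)},g)$ shows that $\rk_\delta^R(\C)$ is open; here I invoke the analogue of Lemma \ref{lem:theta} for right translations noted immediately after it. Finally, $\rk_\delta(\C)=\rk_\delta^L(\C)\cap\rk_\delta^R(\C)$ is open as an intersection of two open sets. The only step that genuinely requires care is the semicontinuity claim, and there the sole thing to verify is that the presence of a boundary or corner in $\comp\C$ causes no trouble; it does not, since the minor argument is carried out in the full tangent space at each point, so I anticipate no substantive obstacle.
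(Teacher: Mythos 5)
Your proof is correct, and it takes a genuinely different route from the one in the paper. The paper never invokes Lemma \ref{lem:theta} here: instead it packages the differentials $\d(L_g)_{1_{s(g)}}$ into a smooth section $\phi_L$ of the auxiliary bundle $E^L=(s^*u^*\ker\d t)^*\otimes\ker\d t$ over $\C$, whose fibre at $g$ consists of the linear maps $\ker\d t_{1_{s(g)}}\rightarrow\ker\d t_g$, and then observes that in any local trivialization $p_L^{-1}(U_i)\cong U_i\times\R^{\delta\times\delta}$ the full-rank locus is $\phi_L^{-1}\circ\psi_i^{-1}(U_i\times\GL(\delta,\R))$, hence open. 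You bypass this construction entirely: Lemma \ref{lem:theta} (which precedes this proposition in the paper, so there is no circularity) converts ``$g$ has full left rank'' into ``$\d\vartheta$ is an isomorphism at $(g,1_{s(g)})$''; the full-rank locus $U^L\subset\comp\C$ of the fixed smooth map $\vartheta$ is open --- and your determinant argument is indeed insensitive to boundary and corner points of $\comp\C$, where the differential is still defined on the whole tangent space and has continuous entries in corner charts --- and $\rk_\delta^L(\C)=(\sigma^L)^{-1}(U^L)$ is then open because $\sigma^L(g)=(g,1_{s(g)})$ is continuous into $\comp\C$ with its subspace topology (continuity suffices here; smoothness of $\sigma^L$, though true, is more than you need). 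Both arguments are at bottom semicontinuity-of-rank arguments, and both resolve the same difficulty you correctly single out: the maps $\d(L_g)_{1_{s(g)}}$ have varying domain and codomain as $g$ moves, so they must first be realized inside one fixed object --- the bundle $E^L$ for the paper, the map $\vartheta$ on $\comp\C$ for you. What the paper's extra machinery buys is visible in the remark immediately following the proposition: the section $\phi_L$ directly yields lower semicontinuity of $\rankl$ as a function on all of $\C$, i.e.\ control of every stratum $\rk_r^L(\C)$ rather than only the top one. Your route could recover this as well, since $\rank\d\vartheta_{(g,h)}=\dim\C+\rank\d(L_g)_h$, but for the proposition as stated your argument is the more economical of the two, as it reuses a lemma already proved and requires no new constructions.
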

\begin{proof}
Let us first show that differentials of left translations define a certain morphism of vector bundles. We consider the following vector bundle over $\C$:
\[E^L=\coprod_{g\in \C}(\ker\d t_{1_{s(g)}})^*\otimes\ker\d t_g=(s^*u^*\ker \d t)^*\otimes  \ker \d t.\]
Denote the projection map by $p_L\colon E^L\rightarrow \C$, so the fibre $p_L^{-1}(g)=E^L_g$ of $E^L$ consists of all linear maps $\ker \d t_{1_{s(g)}}\rightarrow \ker\d t_g$.  Note that the map $\phi_L\colon \C\rightarrow E^L$, given as $g\mapsto \d(L_g)_{1_{s(g)}}\colon \ker \d t_{1_{s(g)}}\rightarrow \ker\d t_g$ is a section of $E^L$, which is smooth since 
\[
\d (L_g)_{1_{s(g)}}=\d m_{(g,1_{s(g)})}(0_g,-).
\]
In other words, $\phi_L$ defines a morphism $s^*u^*\ker\d t\rightarrow \ker \d t$ of vector bundles. 

Now take an atlas of local trivializations $\psi_i\colon p_L^{-1}(U_i)\rightarrow U_i\times \R^{\delta\times\delta}$ of $E^L$. The set $U_i\cap \rk^L_\delta(\mathcal \C)=\phi_L^{-1}\circ\psi_i^{-1}(U_i\times \mathrm{GL}(\delta,\R))$ is open in $U_i$, hence
\[
\rk_\delta^L(\C)=\bigcup_i \big(U_i\cap \rk_\delta^L(\C)\big)
\]
is open in $\C$. A similar proof works for $\rk^R_\delta(\C)$, so the result for $\rk_\delta(\C)$ follows.
\end{proof}

\begin{rem}
The smooth map $\phi_L$ (respectively, $\phi_R$) which was used in the last proposition, can also easily be used to show that $\rankl$ (respectively, $\rankr$) is a lower semi-continuous function, i.e.\ that any morphism $g\in\C$ admits a neighborhood on which the left (respectively, right) rank is non-decreasing. Note that this should not be confused with lower semi-continuity of the map $\C^{s(g)}\rightarrow \R$, given as $h\mapsto \rank \d(L_g)_h$, where $g\in \C$ is a fixed morphism.
\end{rem}

Properties of ranks of morphisms in a Lie category $\C$ reflect on its categorical structure, as illustrated by the following simple observation.

\begin{cor}
Let $\C$ be a Lie category. If the regular morphisms have constant rank, then $\rk_\delta(\C)$ is an open Lie subcategory of $\C$.
\end{cor}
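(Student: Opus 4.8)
The plan is to apply Lemma~\ref{lem:lie_subcat} with $\D=\C$ and candidate subcategory $\rk_\delta(\C)$, whose openness in $\C$ was already established in the preceding proposition. The differential-topological hypotheses then come essentially for free: since $\rk_\delta(\C)$ is open, it is an embedded (open) submanifold of $\C$, the restrictions $s|_{\rk_\delta(\C)},t|_{\rk_\delta(\C)}$ remain submersions as restrictions of $s,t$ to an open set, and $\partial(\rk_\delta(\C))=\rk_\delta(\C)\cap\partial\C$ is open in $\partial\C$, so $s,t$ stay submersive on this boundary by the regular boundary assumption. Hence the only real content is to check that $\rk_\delta(\C)$ is a \emph{wide subcategory} of $\C$.

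Wideness is immediate: every invertible morphism --- in particular every unit $1_x$ --- has full rank by Example~\ref{ex:ranks}(i), so $u(\X)\subset\rk_\delta(\C)$. The substantive step, which I expect to be the crux, is closure under composition. Given composable regular morphisms $g,h$, the rank inequalities from the Properties of ranks proposition only yield $\rankl(gh)\le\delta$ and $\rankr(gh)\le\delta$, and it is precisely the constant-rank hypothesis that upgrades these to equalities. I would argue as follows: using $s(gh)=s(h)$ and $L_{gh}=L_g\circ L_h$, the chain rule gives
\[
\d(L_{gh})_{1_{s(h)}}=\d(L_g)_{h}\circ\d(L_h)_{1_{s(h)}}.
\]
Both left translations are maps between $\delta$-dimensional target fibres, so having full rank means their differentials are isomorphisms. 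Now $\d(L_h)_{1_{s(h)}}$ is an isomorphism because $h$ is regular, while $\d(L_g)_{h}$ is an isomorphism because $g$ has \emph{constant} left rank $\delta$ and $h\in\C^{s(g)}$; the composite is therefore an isomorphism and $\rankl(gh)=\delta$. The mirror argument applied to $R_{gh}=R_h\circ R_g$, evaluated at $1_{t(g)}$ and using regularity of $g$ together with the constant right rank of $h$ (noting $g\in\C_{t(h)}$), gives $\rankr(gh)=\delta$, so $gh$ is regular.

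With wideness and closure in hand, $\rk_\delta(\C)$ is a wide subcategory, and Lemma~\ref{lem:lie_subcat} delivers the conclusion. The main obstacle is exactly this closure step: without constancy of rank the factor $\d(L_g)_{h}$ could drop rank away from the unit, so the composite $\d(L_g)_{h}\circ\d(L_h)_{1_{s(h)}}$ need not be surjective and $gh$ might fail to be regular --- the rank inequalities alone permit precisely such a decrease.
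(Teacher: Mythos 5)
Your proposal is correct and follows essentially the same route as the paper: the paper's proof likewise reduces everything to closure under composition and uses the identical chain-rule factorization $\d(L_{gh})_{1_{s(h)}}=\d(L_g)_h\circ\d(L_h)_{1_{s(h)}}$, with constancy of rank supplying full rank of the factor $\d(L_g)_h$ away from the unit (and the mirror argument for right translations). Your additional verifications (wideness via invertibility of units, and the hypotheses of Lemma~\ref{lem:lie_subcat} holding automatically on an open subset) are exactly what the paper leaves implicit in its opening sentence ``we only have to check that the composition of two regular morphisms is a regular morphism.''
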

\begin{proof}
We only have to check that the composition of two regular morphisms is a regular morphism. To that end, just notice that
$
\d(L_{gh})_{1_{s(h)}}=\d(L_g)_h\circ\d(L_h)_{1_{s(h)}}
$
is a composition of maps with full rank, and similar holds for right translations.
\end{proof}

\begin{rem}
Notice that in this case, the left and right algebroids of $\rk_\delta(\C)$ are isomorphic to those of $\C$, respectively. This is in accord with the moral that the left and right algebroids are, as vector bundles, determined by their fibres at regular morphisms.
\end{rem}


\subsection*{Action of the core $\G(\C)$ on a Lie category $\C$}

The natural left and right actions of the core $\G(\C)$ on a Lie category $\C$ can be used to find properties of Lie categories.\footnote{A reference for actions of Lie groupoids on smooth maps is \cite[Chapter 1.6]{mackenzie}.} We will only focus on describing the left action of $\G(\C)$ on $t\colon \C\rightarrow \X$; the right action of $\G(\C)$ on $s\colon \C\rightarrow \X$ follows a similar construction. 

Denote by $\G(\C) \tensor[_s]{\times}{_t}\C=\set{(g,c)\in \G(\C)\times\C\given s(g)=t(c)}$ the fibred product of the maps $s|_{\G(\C)}$ and $t$. This set has a natural structure of a groupoid over $\C$; indeed, the source and target maps are given as $\underline s(g,c)=c$ and $\underline t(g,c)=gc$, the composition is given as $(g,hc)(h,c)=(gh,c)$, the unit map is $\underline u(c)=(1_{t(c)},c)$ and the inverses are given by $(g,c)^{-1}=(g^{-1},gc)$. We leave it to the reader to check that this defines a groupoid over $\C$. As regards to its smooth structure:

\begin{prop}
Let $\C\rra\X$ be a Lie category without boundary. The left action of the core $\G(\C)$ on the target map $t\colon\C\rightarrow \X$ yields a Lie groupoid
\[
\G(\C)\tensor[_s]{\times}{_t}\C\rra\C.
\]
\end{prop}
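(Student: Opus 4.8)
The plan is to recognize $\G(\C)\tensor[_s]{\times}{_t}\C\rra\C$ as the action groupoid of the canonical left action of the core on the target map $t\colon\C\to\X$, and to produce its smooth structure by transporting it from the already-understood manifolds $\C$, $\comp\C$ and $\G(\C)$. Since all of the algebraic structure maps $\underline s,\underline t,\underline u$, the composition and the inverse have been written down explicitly, and the groupoid axioms over $\C$ are granted, the only remaining task is to equip the arrow space with a smooth manifold structure for which these maps are smooth and for which $\underline s,\underline t$ are submersions.

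First I would use that $\C$ is boundaryless, so $u(\X)\subset\Int\C=\C$, and hence Theorem~\ref{thm:open_inv} makes $\G(\C)$ an \emph{open} subset of $\C$; combined with the functor $\G\colon\mathbf{LieCat_\partial}\to\mathbf{LieGrpd}$ discussed after that theorem, this exhibits $\G(\C)\rra\X$ as a Lie groupoid, in particular one with smooth inversion $g\mapsto g^{-1}$. The arrow space then needs no transversality argument at all: since $\G(\C)$ is open in $\C$, the product $\G(\C)\times\C$ is open in $\C\times\C$, and therefore
\[
\G(\C)\tensor[_s]{\times}{_t}\C=\comp\C\cap\big(\G(\C)\times\C\big)
\]
is an open submanifold of $\comp\C$, which is itself a smooth manifold (without boundary) by the opening remarks.

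Next I would check the two submersivity conditions and the smoothness of the remaining maps. The source $\underline s=\pr_2$ is the restriction to an open set of the second projection $\comp\C\to\C$, and the latter is a submersion because for any $w\in T_c\C$ the surjectivity of $\d s$ supplies $v\in T_g\C$ with $\d s(v)=\d t(w)$, giving $(v,w)\in T_{(g,c)}\comp\C$ with $\pr_2(v,w)=w$. The target $\underline t=m|_{\G(\C)\tensor[_s]{\times}{_t}\C}$ is smooth as a restriction of the composition map $m$, and the action-groupoid inversion $\inv(g,c)=(g^{-1},gc)$ is an involutive diffeomorphism of $\G(\C)\tensor[_s]{\times}{_t}\C$ (smooth since inversion in $\G(\C)$ and $m$ are smooth, and equal to its own inverse); as $\underline s\circ\inv=\underline t$, the target is a submersion, being a submersion composed with a diffeomorphism. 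The unit $\underline u(c)=(1_{t(c)},c)$ and the composition $(g,hc)(h,c)=(gh,c)$ are then visibly smooth, built from $u\circ t$, the multiplication of $\G(\C)$, and projections.

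The step I expect to carry the weight of the argument is the submersivity of $\underline t$: for a general Lie category the naive self-action $c'\cdot c=c'c$ fails to have a submersive target, precisely because $m$ need not be a submersion (recall $\d m_{(0,0)}=0$ for $\R^{n\times n}$, noted before Corollary~\ref{cor:composition_submersion}). It is exactly the invertibility of the elements of $\G(\C)$, furnishing the diffeomorphism $\inv$, that rescues the estimate; everything else reduces to the familiar fact that the source map of a Lie groupoid action along a moment map is a pullback of a submersion, for which one may also consult \cite[Chapter 1.6]{mackenzie}.
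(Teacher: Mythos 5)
Your proof is correct, and it departs from the paper's argument at both of the technical pinch points, so a comparison is worthwhile. Writing $\underline\C=\G(\C)\tensor[_s]{\times}{_t}\C$: for the smooth structure on $\underline\C$, the paper applies the transversality theorem to exhibit $(s|_{\G(\C)}\times t)^{-1}(\Delta_\X)$ as a submanifold of $\G(\C)\times\C$, whereas you note that $\underline\C$ is simply an \emph{open} subset of $\comp\C$, since $\G(\C)$ is open in $\C$ by Theorem \ref{thm:open_inv}. Your shortcut is more economical here, but the paper's formulation is the one that survives in Remark \ref{rem:base_boundary}, where $\C$ is allowed a normal boundary with $u(\X)\subset\partial\C$: there $\G(\C)$ is open only in $\partial\C$, so openness of the fibred product fails, while Proposition \ref{prop:transversality} still applies. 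For submersivity of $\underline t=m|_{\underline\C}$ --- the step you correctly identify as carrying the weight --- the paper invokes Corollary \ref{cor:composition_submersion}: left translations by invertibles have full rank everywhere, so $m$ is submersive at every point of $\underline\C$. You instead use the classical groupoid identity $\underline t=\underline s\circ\inv$, where $\inv(g,c)=(g^{-1},gc)$ is an involutive diffeomorphism of $\underline\C$. Both are valid, and they are less independent than they appear: the smoothness of inversion in $\G(\C)$ that your argument requires is obtained in the paper from the remark following Lemma \ref{lem:theta}, i.e.\ from the same $\vartheta$-machinery on which Corollary \ref{cor:composition_submersion} rests, so neither route bypasses that lemma. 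Your route has the merit of making the role of invertibility vivid (a diffeomorphism interchanges $\underline s$ and $\underline t$); the paper's route proves the stronger local statement that $m$ is submersive wherever the relevant left translation has full rank, invertible or not. One point of care: before declaring the action-groupoid composition $\underline m$ smooth, you should record, as the paper does, that $\underline\C{}^{(2)}$ is a submanifold of $\underline\C\times\underline\C$ (by transversality, using the submersivity of $\underline s$ and $\underline t$ just established); smoothness of $\underline m$ only makes sense once this is known.
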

\begin{proof}
The core $\G(\C)$ is an open Lie subgroupoid of $\C$ by Theorem \ref{thm:open_inv}, and so the usual transversality theorem for manifolds without boundary implies that 
\[\underline\C=\G(\C)\tensor[_s]{\times}{_t}\C=(s|_{\G(\C)}\times t)^{-1}(\Delta_\X)\]
is a smooth submanifold of $\G(\C)\times\C$ without boundary. Furthermore, $\underline s=\mathrm{pr}_2$ is clearly a sumbersion, and Corollary \ref{cor:composition_submersion} tells us that the target map \[\underline t=m|_{\G(\C)\tensor[_s]{\times}{_t}\C}\] is also a submersion, thus $\underline\C{}^{(2)}\subset \underline\C\times\underline\C$ is also a submanifold by transversality. The smoothness of composition map $\underline m$ and unit map $\underline u$ then follows easily from the smoothness of respective maps in $\C$.
\end{proof}
\begin{rem}
\label{rem:base_boundary}
Notice that in the case when $\C$ has a normal boundary, the maps $\underline s$ and $\underline t$ are still submersions, but their restrictions to the boundary $\partial\underline\C=\G(\C)\times \partial\C$ are not, which shows the need for amending the definition of a Lie category when the object manifold has a boundary (or corners). 

In this case, $\underline \C\subset \G(\C)\times \C$ is still a submanifold (by Proposition \ref{prop:transversality}) and the maps $\underline s$ and $\underline t$ are topological submersions (see Definition \ref{defn:top_sub}) that also satisfy $\underline s(\partial\underline\C)\subset \partial\C$ and $\underline t(\partial\underline\C)\subset \partial\C$, so \cite[Proposition 4.2.1]{corners} implies that $\underline s$-fibres and $\underline t$-fibres are submanifolds of $\underline \C$. This suggests a definition of a Lie category for the case when the object manifold has corners, but we will not pursue this further here.
\end{rem}
We can now use the action Lie groupoids above to prove that it does not matter at which invertible morphism we measure the ranks of a given morphism $g\in\C$.
\begin{cor}
\label{cor:ranks_invertibles}
Let $\C\rra\X$ be a Lie category with a normal boundary. For any $g\in\C$, there holds:
\begin{align*}
\rankl(g)&=\rank \d(L_g)_h\ \text{ for any }h\in \G(\C)^{s(g)},\\
\rankr(g)&=\rank \d(R_g)_h\ \text{ for any }h\in \G(\C)_{t(g)}.
\end{align*}
\end{cor}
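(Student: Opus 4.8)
The plan is to reduce both assertions to a single statement — that the left rank is unchanged under right multiplication by an invertible (and, dually, the right rank under left multiplication) — and then to prove that statement by interpreting $L_g$ as an equivariant map for the core action and invoking the action Lie groupoid constructed above. For the reduction, fix $g\in\C$ and $h\in\G(\C)^{s(g)}$. By Proposition \ref{prop:inv} the left translation $L_h\colon\C^{s(h)}\to\C^{s(g)}$ is a diffeomorphism, and $L_{gh}=L_g\circ L_h$ with $L_h(1_{s(h)})=h$. Differentiating at $1_{s(h)}$ and cancelling the isomorphism $\d(L_h)_{1_{s(h)}}$ gives $\rank\d(L_g)_h=\rankl(gh)$, so the first identity is equivalent to $\rankl(gh)=\rankl(g)$; the second reduces symmetrically to $\rankr(hg)=\rankr(g)$. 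Note that the opposite one-sided invariances are immediate, since e.g.\ $L_{kg}=L_k\circ L_g$ with $L_k$ a diffeomorphism forces $\rankl(kg)=\rankl(g)$ by \emph{post}-composition with an isomorphism; the content here is that right multiplication instead \emph{pre}-composes $\d L_g$ and moves its base point from $1_{s(g)}$ to $h$.

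\textbf{Equivariance and the model case.} To control $\rank\d L_g$ at the two base points I would use that $1_{s(g)}$ and $h=1_{s(g)}\cdot h$ lie in one orbit $\G(\C)^{s(g)}$ of the right action of $\G(\C)$ on $s\colon\C\to\X$, and that $L_g$ is equivariant for this action: by associativity $L_g(c\cdot k)=(gc)\,k=L_g(c)\cdot k$. In the model case of a Lie monoid this closes immediately, and it is instructive: there $R_h$ is a \emph{global} diffeomorphism of $M$ commuting with $L_g$, so $L_g$ is $\G(M)$-equivariant for an action by global diffeomorphisms, whence $\rank\d L_g$ is constant on the orbit $e\cdot\G(M)=\G(M)$ and in particular $\rank\d(L_g)_h=\rank\d(L_g)_e=\rankl(g)$.

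\textbf{The general case via the action Lie groupoid.} For a general Lie category the same equivariance holds, but $R_h$ is now only a diffeomorphism between the source fibres $\C_{t(h)}\to\C_{s(h)}$; a single translation therefore fails to be a global self-map of the target fibre $\C^{s(g)}$ and controls $\d L_g$ only along directions tangent to the double fibre $\C^{s(g)}_{t(h)}$. This is exactly the gap the action Lie groupoid $\G(\C)\tensor[_s]{\times}{_t}\C\rra\C$ is meant to fill: being a Lie groupoid, a local bisection through the arrow joining $1_{s(g)}$ to $h$ furnishes a genuine local diffeomorphism of a full neighborhood in $\C$ which, being built from the target-preserving right action, restricts to a local diffeomorphism of $\C^{s(g)}$ carrying $1_{s(g)}$ to $h$; combined with equivariance and the local triviality of the groupoid action along the orbit, this identifies the two differentials and yields $\rank\d(L_g)_{1_{s(g)}}=\rank\d(L_g)_h$.

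\textbf{Main obstacle.} The hard part is precisely this last step — upgrading the single-arrow comparison, which only sees the double-fibre directions, to the full differential; equivalently, establishing that an equivariant smooth map has constant differential-rank along the orbits of a Lie groupoid action. The monoid computation shows why the action Lie groupoid is the right device: it restores, along each orbit, the ``global diffeomorphism'' behavior that a group action enjoys automatically but a bare family of partial translations $R_h$ does not. Once orbit-constancy is in hand, the right-rank statement follows verbatim after interchanging $L$ with $R$, source fibres with target fibres, and the right action with the left action of $\G(\C)$ on $t$.
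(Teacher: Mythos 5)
Your reduction of the statement to $\rankl(gh)=\rankl(g)$ for $h\in\G(\C)^{s(g)}$ is correct, as is the Lie monoid model case. But there is a genuine gap exactly at the step you yourself flag as the main obstacle, and the mechanism you propose does not close it as stated. If $\Phi(c)=c\,K(c)$ is the local diffeomorphism of $\C$ induced by a local bisection of the action groupoid $\C\tensor[_s]{\times}{_t}\G(\C)\rra\C$ through the arrow $(1_{s(g)},h)$, then
\[
L_g(\Phi(c))=(gc)\,K(c),\qquad \Phi(L_g(c))=(gc)\,K(gc),
\]
and there is no reason for these to agree for a general bisection: the arrow-by-arrow identity $L_g(c\cdot k)=L_g(c)\cdot k$ does not intertwine $L_g$ with $\Phi$ unless $K(c)=K(gc)$, i.e.\ unless the moving part of the bisection factors through the source map. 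The appeal to ``local triviality of the groupoid action along the orbit'' does not repair this; the orbit-constancy of $\rank\d L_g$ is precisely what remains unproven, and your sketch asserts it rather than proves it.

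The repair is concrete and shows your approach can be completed: choose a local bisection of the core itself, i.e.\ a smooth local $t$-section $b\colon V\rightarrow\G(\C)$ on a neighborhood $V$ of $s(g)$ in $\X$, with $b(s(g))=h$ and $s\circ b$ a diffeomorphism onto an open set (such $b$ exist through every arrow of a Lie groupoid, and $\G(\C)$ is one by Theorem \ref{thm:open_inv} — this is where the normal boundary hypothesis enters). Set $\Phi(c)=c\,b(s(c))$ on $s^{-1}(V)$. Then $\Phi$ is a diffeomorphism onto $s^{-1}\big((s\circ b)(V)\big)$ preserving $t$-fibres, with $\Phi(1_{s(g)})=h$ and $\Phi(g)=gh$, and now equivariance holds identically, $L_g\circ\Phi=\Phi\circ L_g$, because $s(gc)=s(c)$. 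Differentiating at $1_{s(g)}$ and restricting to $\ker\d t$ gives $\d(L_g)_h=\d\Phi_g\circ\d(L_g)_{1_{s(g)}}\circ(\d\Phi_{1_{s(g)}})^{-1}$, whence the rank equality; the right-rank statement is symmetric, and the boundary causes no extra difficulty since $\Phi$ is a diffeomorphism of manifolds with boundary. Note this repaired route still differs from the paper's, which never formulates equivariance: the paper applies to the action groupoid the standard Lie-groupoid fact that $\underline t$ restricted to an $\underline s$-fibre has constant rank, observing that $\underline t|_{\underline s^{-1}(g)}$ is literally $R_g|_{\G(\C)_{t(g)}}$, and it handles $\partial\C\neq\emptyset$ by splitting into the actions of $\G(\C)$ on $\Int\C$ and on $\partial\C$ — a case distinction your proposal omits entirely.
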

\begin{proof}
Consider first the case when $\partial\C=\emptyset$. The restriction of the target map to any source fibre has constant rank, in any Lie groupoid.\footnote{See proof of Corollary \ref{cor:hom_sets_embedded} for a method of proving this.} In our case, $\underline s^{-1}(g)=\G(\C)_{t(g)}$ for any $g\in\C$, and the map
\[\underline t|_{\underline s^{-1}(g)}=R_g|_{\G(\C)_{t(g)}}\]
has constant rank which must thus be equal to $\rankr(g)$. A similar proof works for left translations, using the right action of $\G(\C)$ on $s\colon\C\rightarrow \X$.

Now suppose $\partial\C\neq \emptyset$. We may consider the left action groupoids 
\[
\G(\C)\tensor[_s]{\times}{_t}\Int\C\rra\Int\C,\quad \G(\C)\tensor[_s]{\times}{_t}\partial\C\rra\partial\C
\]
of $\G(\C)$ on $t|_{\Int\C}$ and $t|_{\partial \C}$, respectively; note that these are in fact actions of $\G(\C)$ since $L_g(\Int \C^{s(g)})\subset \Int \C^{t(g)}$ and $L_g(\partial\C^{s(g)})\subset\partial\C^{t(g)}$ for any $g\in\G(\C)$. The same technique as above now shows the wanted conclusion.
\end{proof}

\subsection*{Singular distributions associated to translations}

A Lie category $\C\rra\X$ (assume it is boundaryless for simplicity) comes equipped with singular distributions determined by differentials of left and right translations. Focusing only on left translations, define the singular distribution $D\subset T\C$ as
\[D_g=\Im\d(L_g)_{1_{s(g)}}\leq \ker\d t_g\leq T_g\C.\]
In what follows, we prove that $D$ is \textit{integrable}, i.e.\ there is a decomposition $\mathcal F(D)$ of $\C$ into maximally connected weakly embedded submanifolds,\footnote{Recall that an injective immersion $\varphi\colon M \rightarrow N$ is said to be a \textit{weak embedding}, if any smooth map $f\colon P\rightarrow N$ with the property $f(P)\subset \varphi(M)$, factors through $\varphi$.} called the \textit{leaves} of $\mathcal F(D)$, whose tangent spaces coincide with the fibres of $D$. 

Denote the $D$-valued vector fields on $\C$ by
\[\Gamma^\infty(D):=\set{X\in \vf(\C)\given X_g\in D_g\text{ for all }g\in\C}.\]
It is not hard to see that $D$ is \textit{locally of finite type}, i.e.\ for any $g\in \C$ there exist finitely many $X_i\in \Gamma^\infty(D)$ such that:
\begin{enumerate}
\item $D_g=\mathrm{Span}(X_i|_g)_i$,
\item For any $X\in \Gamma^\infty(D)$ there exists a neighborhood $U$ of $g$ in $\C$ and functions $f_i{}^j\in C^\infty(U)$, such that $[X,X_i]_h=\sum_j f_i{}^j(h)X_j|_h$ for all $h\in U$.
\end{enumerate}
Indeed, we may pick sections $(\alpha_i)_i$ of $A^L(\C)$, such that they constitute a local frame on some neighborhood $V$ of $s(g)$ in $\X$, and extend them to left-invariant vector fields $(X_i)_i$. The point (i) is clearly satisfied; to show (ii), denote $U=s^{-1}(V)$, and note that that the set of left-invariant vector fields is closed under the Lie bracket by Lemma \ref{lem:closed_lie_bracket}, so on $U$ there holds
\[
[X_i,X_j]=\textstyle\sum_k f_{ij}{}^k X_k,
\]
for some functions $f_{ij}{}^k\in C^\infty(U)$, and since any $X\in\Gamma^\infty(D|_U)$ can be written as a $C^\infty(U)$-linear combination of the tuple $(X_i|_U)_i$, the rest follows by using the Leibniz rule for the Lie bracket. 

Since $D$ is locally of finite type, it is integrable by \cite{sussman}. The following proposition says that the leaves are precisely the connected components of the orbits \[\set*{\underline s(\underline t^{-1}(g))\given g\in\C}\] of the groupoid $\C \tensor[_s]{\times}{_t} \G(\C)\rra\C$ corresponding to the right action of $\G(\C)$ on $s\colon\C\rightarrow \X$.

\begin{prop}
Let $\C\rra\X$ be a Lie category without boundary. The integral manifold of the singular distribution $D\subset T\C$ through $g\in \C$ is $L_g(\G(\C)^{s(g)})$.
\end{prop}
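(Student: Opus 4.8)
The plan is to realise $L_g(\G(\C)^{s(g)})$ as an orbit of a Lie groupoid and to recognise $D$ as the corresponding orbit (anchor) distribution; the statement then follows from the principle announced just above, that the leaves of an integrable distribution arising from a Lie groupoid are the connected components of its orbits. First I would record the relevant groupoid: by the right-action analogue of the preceding proposition, $\C \tensor[_s]{\times}{_t}\G(\C)\rra\C$ is a Lie groupoid over $\C$, with $\underline s(c,h)=c$, $\underline t(c,h)=ch$ and units $\underline u(c)=(c,1_{s(c)})$. A short computation with the action shows that its orbit through $g$, namely $\underline s(\underline t^{-1}(g))$, equals $\set{gh\given h\in\G(\C)^{s(g)}}=L_g(\G(\C)^{s(g)})$, which contains $g$ (take $h=1_{s(g)}$).

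Second, I would identify the tangent distribution of this orbit with $D$. Since $\C$ has no boundary, Theorem \ref{thm:open_inv} makes $\G(\C)$ open in $\C$, so $\G(\C)^{s(g)}$ is open in the fibre $\C^{s(g)}$ and $T_{1_{s(g)}}\G(\C)^{s(g)}=\ker\d t_{1_{s(g)}}$. Differentiating curves $r\mapsto g\,k(r)$ with $k(r)\in\G(\C)^{s(g)}$ and $k(0)=1_{s(g)}$ shows that the tangent space of the orbit at $g$ is $\Im\d(L_g)_{1_{s(g)}}=D_g$; since orbits partition $\C$, the orbit through any $k=gh_0$ in $L_g(\G(\C)^{s(g)})$ is the same orbit, so the identical computation there gives tangent space $\Im\d(L_k)_{1_{s(k)}}=D_k$. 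As an orbit of a Lie groupoid is an immersed (indeed weakly embedded) submanifold whose tangent space at every point is the image of the anchor, the orbit $L_g(\G(\C)^{s(g)})$ is an integral manifold of $D$ through $g$, with tangent distribution exactly $D$.

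Finally, because $D$ is integrable by \cite{sussman} and, by the previous step, coincides with the anchor distribution of $\C \tensor[_s]{\times}{_t}\G(\C)$, the leaf of $\mathcal F(D)$ through $g$ is the connected component of the orbit $L_g(\G(\C)^{s(g)})$ containing $g$, which is the asserted description. The main obstacle I anticipate is the \emph{exact} (not merely containment) identification of the orbit's tangent space with $D$: this is precisely where the openness of the core is needed, to guarantee $T_{1_{s(g)}}\G(\C)^{s(g)}=\ker\d t_{1_{s(g)}}$. A second point requiring care is that $L_g(\G(\C)^{s(g)})$ must be handled as a groupoid orbit, not as a naive image of $L_g$: when $g$ is singular the parametrisation $L_g|_{\G(\C)^{s(g)}}$ need be neither injective nor immersive, so it is the orbit structure—rather than $L_g$ alone—that supplies the submanifold and its tangent distribution.
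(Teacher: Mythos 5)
Your argument is correct, but it reaches the conclusion by a genuinely different route than the paper's. The paper's proof is a direct application of Corollary \ref{cor:ranks_invertibles}: since $L_g|_{\G(\C)^{s(g)}}$ has constant rank, the rank theorem exhibits $L_g(\G(\C)^{s(g)})$ as an immersed submanifold of $\C^{t(g)}$ whose tangent space at $gh$ is $\Im\d(L_g)_h$, and invertibility of $h$ identifies this image with $D_{gh}$. You instead realize the same set as an orbit of the action groupoid $\C\tensor[_s]{\times}{_t}\G(\C)\rra\C$ and quote the orbit theorem of Lie groupoid theory (orbits are weakly embedded submanifolds whose tangent spaces are the images of the anchor), using openness of the core (Theorem \ref{thm:open_inv}) to compute the anchor distribution as exactly $D$, and a partition argument to match leaves with connected components of orbits. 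The two routes rest on the same mechanism --- translation by invertibles forces constant rank --- and the standard proof of the orbit theorem you invoke is precisely the constant-rank argument the paper runs by hand; note also that Corollary \ref{cor:ranks_invertibles} is itself proved in the paper via this very action groupoid. What your version buys: it promotes the remark preceding the proposition (leaves $=$ connected components of orbits) into the proof itself, it yields weak embeddedness of the leaves for free, and it is more careful than the paper about the distinction between the possibly disconnected orbit and the actual leaf through $g$. What it costs: dependence on a black-box theorem the paper never proves (standard, and available in \cite{mackenzie}), whereas the paper's argument is self-contained given its corollary. Your closing diagnosis --- that for singular $g$ the parametrization $L_g|_{\G(\C)^{s(g)}}$ may be neither injective nor immersive, so one needs either constant rank or the orbit structure to produce the submanifold --- is exactly where the content of the proposition lies.
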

\begin{proof}
Corollary \ref{cor:ranks_invertibles} states $L_g|_{\G(\C)^{s(g)}}$ has constant rank, so rank theorem can be applied to deduce $L_g(\G(\C)^{s(g)})$ is an immersed submanifold of $\C^{t(g)}$, the tangent space at $gh$ of which is the image of differential $\d(L_g)_h$, for any $h\in \G(\C)^{s(g)}$.
\end{proof}

\begin{rem}
	In the case when $\C$ has a boundary, the last proposition is in general not true, as is easily seen by considering the order category on $\R$. In general, a natural candidate for the integral manifold of $D$ through $g$ is $L_g(U)$ for an appropriate small open neighborhood of $1_{s(g)}$ in $\C^{s(g)}$, but proving such a general result is harder since rank theorem is not true for manifolds with boundaries (without additional assumptions on $L_g$).
\end{rem}

\section{Extensions of Lie categories to Lie groupoids}
\label{sec:ext}
We have witnessed interesting examples of Lie categories appear by restricting to an embedded subcategory of a Lie groupoid, in spirit of Lemma \ref{lem:lie_subcat}. As we will see next, in order to ascertain the main properties of such Lie categories, the assumption of being embedded may be weakened to being immersed. 

\begin{defn}
An \textit{extension to a Lie groupoid} of a Lie category $\C\rightrightarrows \X$ is a Lie groupoid $\G\rightrightarrows \X$, together with an injective, immersive functor $F\colon \C\rightarrow \G$ over the identity. In other words, $\G$ is a Lie groupoid such that $\C$ is its wide Lie subcategory. If such an extension exists, we say that $\C$ is \textit{extendable to a Lie groupoid}. Furthermore, we say that an extension to a Lie groupoid is \textit{weakly étale}, if $\dim \G=\dim \C$.
\end{defn}
\begin{rem}
Note that if $\C$ and $\G$ do not have a boundary, then an extension is weakly étale if, and only if, the map $F$ is étale (i.e.\ a local diffeomorphism), by virtue of inverse map theorem. If $\dim \G=\dim \C$, but $F(\partial \C)\not\subset \partial\G$, then $F$ is not a local diffeomorphism (see e.g.\ the order category from Example \ref{ex:order_cat}).
\end{rem}

An obvious necessary condition for an arbitrary category $\C$ to admit an injective functor $F$ into a groupoid $\G$, is \textit{cancellativity} of all elements in $\C$, i.e.\ all left and right translations in $\C$ must be injective. For example, if $gh=gk$ holds for some morphisms in $\C$, then $F(g)F(h)=F(g)F(k)$ holds in $F(\C)\subset \G$, implying $F(h)=F(k)$, and hence $h=k$ by injectivity of $F$, so $L_g$ is injective.

In the differentiable setting, extensions to groupoids reflect on ranks and algebroids; the following lemma yields necessary conditions on a Lie category $\C$ to be extendable to a Lie groupoid.
\begin{lem}
\label{lem:extensions}
If a Lie category $\C$ is extendable to a Lie groupoid $\G$, all its morphisms have full and constant rank, and all left and right translations are injective. Moreover, if the extension is weakly étale, then $A^L(\C)\cong A(\G)\cong A^R(\C)$.
\end{lem}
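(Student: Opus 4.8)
The plan is to dispatch the two assertions separately: first cancellativity together with the full-and-constant rank statement, and then the algebroid isomorphisms under the weakly étale hypothesis.

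I would begin by recording injectivity of the translations, for which the argument sketched just before the lemma applies verbatim. Since $F\colon\C\to\G$ is a functor over $\id_\X$ and every morphism of the groupoid $\G$ is invertible, $gh=gk$ in $\C$ gives $F(g)F(h)=F(g)F(k)$, and left-multiplying by $F(g)^{-1}$ in $\G$ yields $F(h)=F(k)$, whence $h=k$ by injectivity of $F$. Thus every $L_g$ is injective, and dually every $R_g$ is injective.

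For full and constant rank, the key point is that $F$ intertwines left translations. As $F$ is a functor over $\id_\X$ we have $s\circ F=s$ and $t\circ F=t$, so $F$ restricts to smooth maps $\C^x\to\G^x$ between target fibres, and $F(gh)=F(g)F(h)$ gives the commuting relation $F\circ L_g=L^\G_{F(g)}\circ F$ on $\C^{s(g)}$, where $L^\G$ denotes left translation in $\G$. Differentiating at an arbitrary $h\in\C^{s(g)}$ and applying the chain rule yields
\[
\d F_{gh}\circ \d(L_g)_h = \d(L^\G_{F(g)})_{F(h)}\circ \d F_h .
\]
The right-hand side is injective, being the composite of the isomorphism $\d(L^\G_{F(g)})_{F(h)}$ (since $L^\G_{F(g)}$ is a diffeomorphism) with the injective $\d F_h$ (since $F$ is an immersion); hence the left-hand composite is injective, which forces its rightmost factor $\d(L_g)_h$ to be injective. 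But $\d(L_g)_h$ is a linear map between the tangent spaces of the target fibres $\C^{s(g)}$ and $\C^{t(g)}$, both of dimension $\delta=\dim\C-\dim\X$ because $t$ is a submersion, so injectivity upgrades it to an isomorphism. Therefore $\rank\d(L_g)_h=\delta$ for every $h\in\C^{s(g)}$, i.e. $g$ has full and constant left rank; the dual argument for right translations, using $F\circ R_g=R^\G_{F(g)}\circ F$, gives full and constant right rank.

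Finally, under the weakly étale hypothesis $\dim\G=\dim\C$ I would identify the algebroids through the induced morphism $F_*^L\colon A^L(\C)\to A^L(\G)$, which is the restriction of $\d F$ to $A^L(\C)=u^*(\ker\d t)$, landing in $A^L(\G)=u^*(\ker\d t_\G)$ because $\d t_\G\circ\d F=\d t$ and $F$ preserves units. Immersivity of $F$ makes $F_*^L$ fibrewise injective; both bundles have rank $\delta$ over $\X$ (this is exactly where $\dim\G=\dim\C$ enters), so $F_*^L$ is a fibrewise, hence a vector-bundle, isomorphism. It intertwines anchors since $\d s_\G\circ\d F=\d s$, and it preserves brackets because $A^L$ is a functor, so it is a Lie algebroid isomorphism $A^L(\C)\cong A^L(\G)$. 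The same reasoning gives $A^R(\C)\cong A^R(\G)$, and combining these with the canonical groupoid isomorphism $A^L(\G)\cong A^R(\G)=:A(\G)$ induced by inversion in $\G$ yields $A^L(\C)\cong A(\G)\cong A^R(\C)$.

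I expect the rank step to be the main obstacle: one must combine the intertwining relation with immersivity of $F$ at a \emph{general} point $h$ of the fibre, not merely at the unit, to obtain constancy, and one must be careful that the equal-dimension count legitimately promotes injectivity of $\d(L_g)_h$ to an isomorphism. When $\C$ has boundary, the assertion that $\C^x$ has dimension $\delta$ relies on the regular-boundary assumption, which guarantees that $\C^x$ is a genuine $\delta$-dimensional neat submanifold; this is the point at which the hypotheses on $\C$ must be invoked with care.
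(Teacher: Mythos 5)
Your proposal is correct and follows essentially the same route as the paper: the same commuting square $F\circ L_g=L^{\G}_{F(g)}\circ F$ differentiated at an arbitrary point $h$ of the fibre, with injectivity of $\d(L_g)_h$ forced by immersivity of $F$ and invertibility of $L^{\G}_{F(g)}$, then promoted to an isomorphism by the dimension count $\dim\C^{s(g)}=\dim\C^{t(g)}=\delta$; and for the weakly étale case, the same fibrewise-injective-plus-equal-rank argument showing $F_*^L$ and $F_*^R$ are Lie algebroid isomorphisms, concluded via the inversion-induced isomorphism $A^L(\G)\cong A^R(\G)$. The cancellativity argument you record at the start is likewise the one the paper gives in the paragraph preceding the lemma.
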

\begin{proof}
Let $F\colon \C\rightarrow \G$ be the groupoid extension, and let $g,h\in \C$ be composable. To prove that $g$ has full and constant rank, denote the left translation in $\C$ by $g$ as $L_g^\C$, and the left translation in $\G$ by $F(g)$ as $L^\G_{F(g)}$. By functoriality, the diagram
\[\begin{tikzcd}[column sep=2.5em]
	{\C^{s(g)}} & {\C^{t(g)}} \\
	{\G^{s(g)}} & {\G^{t(g)}}
	\arrow["{L_g^\C}", from=1-1, to=1-2]
	\arrow["{L_{F(g)}^\G}"', from=2-1, to=2-2]
	\arrow["F"', from=1-1, to=2-1]
	\arrow["F", from=1-2, to=2-2]
\end{tikzcd}\]
commutes, so if $v\in T_h(\C^{s(g)})$ is such that $\d(L_g^\C)_h(v)=0$, it implies $\d F_h(v)=0$, and so $v=0$ since $F$ is an immersion. This proves that $\d(L_g^\C)_h$ has full rank, and a similar proof shows an analogous result for the right translation. Hence $g$ has full and constant rank.

To prove the second part, denote the source and target maps in $\C$ and $\G$ as $s^\C, t^\C$ and $s^\G,t^\G$, respectively. Commutativity of the following diagram,
\[\begin{tikzcd}[column sep=1.25em]
	\C && \G \\
	& \X
	\arrow["F", from=1-1, to=1-3]
	\arrow["{t^\C}"', from=1-1, to=2-2]
	\arrow["{t^\G}", from=1-3, to=2-2]
\end{tikzcd}\]
together with the assumption that our extension is weakly étale, implies that for any $x\in \X$, $\d F_{1_x}$ maps $\ker \d t^\C_{1_x}$ isomorphically onto $\ker \d t^\G_{1_x}$, so $F$ induces an isomorphism $F_*^L\colon A^L(\C)\rightarrow A^L(\G)$ of vector bundles, and similarly between $A^R(\C)$ and $A^R(\G)$. Since $F$ is a morphism of Lie categories, these are in fact isomorphisms of Lie algebroids, so we yield the wanted chain of isomorphisms.
\end{proof}

\begin{cor}
If a Lie category is extendable to a Lie groupoid, then the composition map $m\colon\comp\C\rightarrow \C$ is a submersion.
\end{cor}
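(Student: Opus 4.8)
The plan is to combine the two results just proved. The final corollary asserts that if a Lie category $\C$ is extendable to a Lie groupoid, then the composition map $m\colon \comp\C \rightarrow \C$ is a submersion. By Lemma \ref{lem:extensions}, extendability immediately guarantees that all morphisms of $\C$ have full and constant rank. This is precisely the hypothesis appearing in the second sentence of Corollary \ref{cor:composition_submersion}, so the result should follow in essentially one line.

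More precisely, first I would invoke Lemma \ref{lem:extensions} to conclude that every morphism $g\in\C$ has full and constant rank; in particular, for each composable pair $(g,h)\in\comp\C$, the left translation $L_g$ has full rank at $h\in\C^{s(g)}$. Then I would apply the first assertion of Corollary \ref{cor:composition_submersion}, which states that whenever $L_g$ has full rank at $h$, the composition map $m$ is a submersion at the point $(g,h)$. Since this holds at every composable pair, $m$ is a submersion.

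I do not expect any genuine obstacle here, as both ingredients are already in place; the only thing to be careful about is that I cite the correct direction of Corollary \ref{cor:composition_submersion} — namely its pointwise statement ``if $L_g$ has full rank at $h$, then $m$ is a submersion at $(g,h)$'' — rather than relying on the constancy of rank, which is used only to pass from the pointwise to the global statement. Thus a clean proof reads as follows.

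\begin{proof}
By Lemma \ref{lem:extensions}, every morphism of $\C$ has full and constant rank. In particular, for any composable pair $(g,h)\in\comp\C$, the left translation $L_g$ has full rank at $h\in\C^{s(g)}$. Corollary \ref{cor:composition_submersion} then guarantees that $m$ is a submersion at $(g,h)$. As $(g,h)$ was arbitrary, $m\colon\comp\C\rightarrow\C$ is a submersion.
\end{proof}
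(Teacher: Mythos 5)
Your proof is correct and takes essentially the same route as the paper, whose entire proof reads ``Follows directly from Corollary \ref{cor:composition_submersion} and Lemma \ref{lem:extensions}''; you merely spell out the combination. One small clarification: the constancy of rank is not just a device for globalizing the pointwise statement, but is precisely what lets you pass from full rank of $\d(L_g)$ at the unit $1_{s(g)}$ (which is all that ``full rank'' asserts by definition) to full rank at an arbitrary $h\in\C^{s(g)}$ — and your written proof does invoke it correctly for exactly this step.
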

\begin{proof}
Follows directly from Corollary \ref{cor:composition_submersion} and Lemma \ref{lem:extensions}.
\end{proof}

\begin{rem}
To conclude $A^L(\C)\cong A^R(\C)$, it is enough to replace the assumption on the extension $F\colon \C\rightarrow \G$ being weakly étale with the following weaker condition: 
\begin{align}
\label{eq:nice_extension}
\d(\inv)_{1_x}(\d F_{1_x}(\ker \d t_{1_x}^\C))\subset\d F_{1_x}(\ker \d s^\C_{1_x}),
\end{align}
but notice that if $\dim \G\neq \dim \C$, then the left and right algebroids of $\C$ will not be isomorphic to the algebroid $A(\G)$, since their ranks will differ. For simpler notation, assume that $\C\subset \G$ and $F\colon \C\hookrightarrow\G$ is an injective immersion.

To show that $\d(\inv)_{1_x}(\ker \d t_{1_x}^\C)\subset\ker \d s^\C_{1_x}$ implies $A^L(\C)\cong A^R(\C)$, first observe that the assumption \eqref{eq:nice_extension} implies equality instead of just the inclusion. Hence the inversion map induces a vector bundle isomorphism \[\inv_*\colon A^L(\C)\rightarrow A^R(\C),\]
and now it is not difficult to see that for any $\alpha\in \Gamma^\infty(A^L(\C))$, its left-invariant extension $\alpha^L$ to whole $\G$ is $\inv$-related to the right-invariant extension $(\inv_*\alpha)^R$. Hence if $\beta\in\Gamma^\infty(A^L(\C))$ is another section, we obtain that for any $g\in\G$,
\[
\d(\inv)_g([\alpha^L,\beta^L]_g)=[(\inv_*\alpha)^R,(\inv_*\beta)^R]_{g^{-1}}.
\]
Taking $g=1_x$ shows that $\inv_*$ preserves the brackets of $A^L(\C)$ and $A^R(\C)$, so it is an isomorphism of Lie algebroids.
\end{rem}

\begin{rem}
\label{rem:questions}
The importance of Lemma \ref{lem:extensions} is in the fact that it enables us to easily provide positive answers to the following questions, for the case of Lie categories extendable to groupoids:
\begin{enumerate}
\item Are Lie monoids parallelizable?
\item For fixed objects $x,y\in\X$ of a Lie category $\C\rra\X$, is the set $\C_x^y$ of morphisms from $x$ to $y$ a submanifold of $\C$? Equivalently, do Hom-functors map into the category $\mathbf{Diff}$ instead of just $\mathbf{Set}$?
\end{enumerate}
We remark that these questions remain open for Lie categories which do not admit an extension to a Lie groupoid.
\end{rem}

\begin{cor}
If a Lie monoid $M$ is extendable to a Lie group, it is paralellizable.
\end{cor}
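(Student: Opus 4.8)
The plan is to exhibit an explicit global frame on $M$ built from left-invariant vector fields, exactly as one does for Lie groups. First I would observe that since $M$ is a Lie monoid, its object manifold is a single point, so $\dim\X=0$ and the full-rank dimension is $\delta=\codim_\C(\X)=\dim M$. The hypothesis that $M$ extends to a Lie group lets me invoke Lemma \ref{lem:extensions}, which guarantees that every element $g\in M$ has full rank; in particular $\rankl(g)=\rank\d(L_g)_{e}=\dim M$, where $e$ denotes the unit.

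Next I would note that $\d(L_g)_{e}\colon T_e M\to T_g M$ is a linear map between vector spaces of the same dimension $\dim M$ having full rank, hence an isomorphism, for every $g\in M$. This is the key geometric input, and it is really the only place the extendability hypothesis enters: it converts the abstract full-rank condition into pointwise invertibility of the differentials of left translations.

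Then I would pick a basis $\alpha_1,\dots,\alpha_n$ of the Lie algebra $A^L(M)\cong T_e M$, where $n=\dim M$, and use Lemma \ref{lem:closed_lie_bracket} to extend each $\alpha_i$ to a left-invariant vector field $\alpha_i^{L}\in\vf^L(M)$, given on $g\in M$ by $\alpha_i^{L}(g)=\d(L_g)_{e}(\alpha_i)$; the same lemma guarantees these extensions are smooth. Since $\d(L_g)_{e}$ is an isomorphism at every $g$, the vectors $\alpha_1^{L}(g),\dots,\alpha_n^{L}(g)$ form a basis of $T_g M$ for each $g\in M$, so $(\alpha_i^{L})_{i=1}^{n}$ is a global frame on $M$. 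Finally I would conclude that the map $M\times\R^{n}\to TM$ sending $(g,c)$ to $\sum_i c_i\,\alpha_i^{L}(g)$ is a vector bundle isomorphism, whence $TM$ is trivial and $M$ is parallelizable.

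I do not expect any serious obstacle here: the substantive content is entirely packaged in Lemma \ref{lem:extensions}, which supplies full rank, and Lemma \ref{lem:closed_lie_bracket}, which supplies the smooth left-invariant extension. The only points requiring a moment's care are the identification of full rank with $\dim M$ in the monoid case, which hinges on $\dim\X=0$, and the elementary passage from ``full rank'' to ``isomorphism'' using the equidimensionality of $T_e M$ and $T_g M$.
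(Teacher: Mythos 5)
Your proposal is correct and follows exactly the paper's own argument: extend a basis of $T_eM$ to left-invariant vector fields via Lemma \ref{lem:closed_lie_bracket}, and use the full-rank conclusion of Lemma \ref{lem:extensions} to see that these give a global frame. The extra details you spell out (the identification $\delta=\dim M$ when $\dim\X=0$, and full rank implying isomorphism by equidimensionality) are exactly what the paper leaves implicit.
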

\begin{proof}
Extending a basis of $T_eM$ to a tuple of left-invariant vector fields yields a global frame for $TM$ since all elements of $M$ have full rank by Lemma \ref{lem:extensions}.
\end{proof}

The proof of the positive answer to (ii) is similar as in the Lie groupoid case:
\begin{cor}
\label{cor:hom_sets_embedded}
If a Lie category $\C\rra\X$ is extendable to a Lie groupoid, then for any $x,y\in\X$ the set $\C_x^y$ is a closed embedded submanifold of $\C$. In particular, $\C_x^x$ is a Lie monoid for any $x\in \X$.
\end{cor}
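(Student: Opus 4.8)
The plan is to realize $\C_x^y$ as a level set of a constant-rank map and then invoke the constant rank theorem. Since $s$ is a submersion, the source fibre $\C_x=s^{-1}(x)$ is a closed embedded (neat, by \eqref{eq:fibrebdr}) submanifold of $\C$ of dimension $\delta$, and $\C_x^y=(t|_{\C_x})^{-1}(y)$. Thus it suffices to show that $t|_{\C_x}\colon \C_x\to \X$ has locally constant rank on a neighbourhood (in $\C_x$) of $\C_x^y$; the constant rank theorem then exhibits $\C_x^y$ as a closed embedded submanifold of $\C_x$, hence of $\C$, with closedness automatic since $\C_x^y=(s,t)^{-1}(x,y)$. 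This is precisely the method hinted at in the Lie groupoid setting: when $\G$ is a Lie groupoid the computation below (with $R_g$ a diffeomorphism) shows that $t|_{\G_x}$ has constant rank.

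The first step is a rank computation resting on Lemma \ref{lem:extensions}. Fix $g\in\C_x$ and write $y'=t(g)$. Right translation satisfies $t\circ R_g=t|_{\C_{y'}}$ and $R_g(1_{y'})=g$, so differentiating at $1_{y'}$ yields
\[
\d(t|_{\C_x})_g\circ \d(R_g)_{1_{y'}}=\d\big(t|_{\C_{y'}}\big)_{1_{y'}}.
\]
By Lemma \ref{lem:extensions} every morphism has full right rank, so $\d(R_g)_{1_{y'}}$ has rank $\delta$ between the equidimensional spaces $T_{1_{y'}}\C_{y'}$ and $T_g\C_x$, hence is a linear isomorphism. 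Therefore
\[
\rank \d(t|_{\C_x})_g=\rank \d\big(t|_{\C_{y'}}\big)_{1_{y'}}=\rank\rho^R_{y'},
\]
and in fact $\Im \d(t|_{\C_x})_g=\Im\rho^R_{y'}$. In particular the rank of $t|_{\C_x}$ at $g$ depends only on $t(g)$.

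It remains to prove that $y'\mapsto \rank\rho^R_{y'}$ is constant as $g$ ranges over a connected neighbourhood $N$ of a point $g_0\in\C_x^y$ in $\C_x$; this is the main obstacle. The idea is that the images $\Im\rho^R_\bullet$ constitute the singular, integrable distribution determined by the anchor of the Lie algebroid $A^R(\C)$ (Proposition \ref{prop:lie_alg}), whose orbits are immersed submanifolds of $\X$ along which the anchor has constant rank. Given $g'\in N$, join it to $g_0$ by a path in $N\subset\C_x$; its velocity lies in $\ker\d s=T\C_x$, so by the displayed image identity above its image under $\d t$ lies in $\Im\rho^R$ at each time. Thus $t$ carries this path to a curve tangent to the anchor distribution, which therefore remains inside the orbit through $y$; along that orbit $\rank\rho^R$ is constant, so $\rank\d(t|_{\C_x})_{g'}=\rank\rho^R_y$ throughout $N$. (The delicate point here is exactly that a curve tangent to a Stefan--Sussmann distribution stays within a single leaf.) Hence $t|_{\C_x}$ has locally constant rank near $\C_x^y$, and $\C_x^y$ is a closed embedded submanifold of $\C$.

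Finally, for $\C_x^x$: it is an embedded submanifold and a submonoid of $\C$, being closed under composition and containing $1_x$. Its multiplication is the restriction of $m$ to the embedded submanifold $\C_x^x\times\C_x^x\subset\comp\C$, which takes values in the embedded submanifold $\C_x^x$ and is therefore smooth. Since the object manifold here is the single point $x$, all submersion and regular-boundary requirements are vacuous, so $\C_x^x$ is a Lie monoid.
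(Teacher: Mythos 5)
Your reduction to local constancy of the rank of $t|_{\C_x}$ is sound, and your first step is correct: the identity $\d(t|_{\C_x})_g\circ \d(R_g)_{1_{t(g)}}=\rho^R_{t(g)}$ together with full right rank from Lemma \ref{lem:extensions} does give $\rank \d(t|_{\C_x})_g=\rank\rho^R_{t(g)}$. The gap is exactly at the point you flag as delicate: it is \emph{false} in general that a smooth curve tangent to a singular (Stefan--Sussmann) integrable distribution -- even the anchor distribution of a Lie algebroid -- stays inside a single leaf. Counterexample: on $\X=\R^2$ take the action Lie algebroid of the abelian Lie algebra $\R^2$ acting by the commuting, complete vector fields $\partial_y$ and $g(x)\partial_x$, where $g$ is smooth, bounded, $g(x)>0$ for $x>0$ and $g(x)=0$ for $x\leq 0$ (this even integrates to an action Lie groupoid, so integrability of the algebroid does not help). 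Its orbits are the vertical lines $\{x_0\}\times\R$ for $x_0\leq 0$ and the open half-plane $\{x>0\}$, with anchor rank $1$ and $2$ respectively. The smooth curve $\gamma(\tau)=(h(\tau),\tau)$, where $h(\tau)=e^{-1/\tau}$ for $\tau>0$ and $h(\tau)=0$ for $\tau\leq 0$, is tangent to the anchor distribution for every $\tau$, yet it leaves the orbit $\{0\}\times\R$ and enters the orbit $\{x>0\}$; in particular the anchor rank is not constant along it. Note that the failure happens precisely where the rank of the distribution jumps, which is the very thing you are trying to rule out -- so the inference ``tangent curves stay in leaves'' is only available for distributions already known to have locally constant rank, and your argument is in effect circular.

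The paper avoids this entirely by comparing each $g\in\C_x$ with the unit $1_x$, which lies in the \emph{same} source fibre, via the left translation $L_g\colon \C^x\rightarrow\C^{t(g)}$, $1_x\mapsto g$. Since $s|_{\C^{t(g)}}\circ L_g=s|_{\C^x}$ and $g$ has full left rank by Lemma \ref{lem:extensions}, the isomorphism $\d(L_g)_{1_x}\colon\ker\d t_{1_x}\rightarrow\ker\d t_g$ carries $\ker\d(t|_{\C_x})_{1_x}=\ker\d t_{1_x}\cap\ker\d s_{1_x}$ isomorphically onto $\ker\d(t|_{\C_x})_g=\ker\d t_g\cap\ker\d s_g$. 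Hence $\ker\d(t|_{\C_x})$ is a (left-translation-invariant, trivial) subbundle of $T\C_x$ of constant rank, so $t|_{\C_x}$ has constant rank on all of $\C_x$ -- no comparison of anchor ranks at different points of $\X$ is ever needed -- and the constant rank theorem (the paper uses Frobenius for $\ker\d(t|_{\C_x})$, which amounts to the same) finishes the proof. If you simply replace your right-translation computation by this left-translation one, the rest of your argument, including the closedness remark and the Lie monoid statement for $\C_x^x$, goes through and the Stefan--Sussmann step disappears.
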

\begin{proof}
We realize $\C_x^y$ as an integral manifold of a certain distribution on $\C_x$, namely $D=\ker \d(t|_{\C_x})$, or more instructively, $D_g=\ker \d t_g\cap \ker \d s_g$ for all $g\in \C_x$. This is a regular distribution on $\C_x$, since $D_g=\d(L_g)_{1_x}(D_{1_x})$ holds -- the latter is a consequence of the equality $s|_{\C^{t(g)}}\circ L_g=s|_{\C^x}$ and $g$ having full left rank by Lemma \ref{lem:extensions}, which moreover implies that $D$ is a trivial vector subbundle of $T \C_x$. 

Since $D$ is the kernel of a differential of a smooth map, it is involutive, so by Frobenius' theorem integrable. The leaves of the corresponding foliation are the connected components of subspaces $\set{\C_x^y\given y\in \X}$ of $\C_x$, so they are its initial submanifolds. Since the subspaces $\C_x^y$ are also closed in $\C_x$, they are embedded.
\end{proof}

\section{Completeness of invariant vector fields}
It is well-known that on any Lie group $G$, left-invariant vector fields are complete. In this section, we generalize this result to Lie monoids with normal boundaries. Furthermore, we generalize the characterization of completeness of left-invariant vector fields on Lie groupoids to Lie categories with normal boundaries. At last, we will discuss the exponential map for Lie monoids.

\begin{defn}
Let $X$ be a vector field on a smooth manifold $M$ with or without boundary, and denote by $J^X_g$ the maximal interval on which the integral path $\gamma^X_g$ of $X$, starting at $g\in M$, is defined. We say that $X$ is \textit{half-complete} if for any $g\in M$ there either holds $[0,\infty)\subset J^X_g$ or $(-\infty,0]\subset J^X_g$.
\end{defn}

\begin{rem}
\label{rem:inward}
In what follows, we will assume the reader is familiar with the notion of inward-pointing and outward-pointing tangent vectors on the boundary; we direct to \cite[p.\ 118]{lee} for a basic reference. A particularly useful observation is that a vector field $X\in\vf(M)$ which is inward-pointing (or outward-pointing) at a certain point $g\in \partial M$, must remain inward-pointing (or outward pointing) on a neighborhood of $g$ in $\partial M$, from which it follows that $J_g^X$ cannot contain an open neighborhood of zero, but may only contain a half-closed interval $[0,\varepsilon)$, for some $\varepsilon>0$ (or $(-\varepsilon,0]$ in the outward-pointing case).
\end{rem}

Recall from Definition \ref{def:normal_bdry} that a Lie monoid $M$ has a normal boundary, if either $e\in\Int M$, or we have both that $e\in\partial M$ and $\partial M$ is a submonoid of $M$.

\begin{thm}
\label{thm:complete}
Let $M$ be a Lie monoid with a normal boundary and let $X$ be a left-invariant vector field on $M$. The following holds:
\begin{enumerate}
\item Suppose either that $e\in\Int M$, or that $e\in\partial M$ and $X_e$ is tangent to $\partial M$. Then $X|_{\partial M}$ is tangent to $\partial M$, and $X$ is complete.
\item Suppose $e\in\partial M$ and $X_e$ is either inward-pointing or outward-pointing. Then either $J_e^X=[0,\infty)$ or $J_e^X=(-\infty,0]$, respectively, and $X$ is half-complete.
\end{enumerate}
Moreover, the flow of $X$ is given for all $t\in J_e^X$ by $\phi_t^X=R_{\phi_t^X(e)}$.
\end{thm}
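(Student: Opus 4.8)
The plan is to reduce everything to understanding the flow through the identity $e$ and then propagate it to all of $M$ via left-invariance and the formula $\phi_t^X = R_{\phi_t^X(e)}$. First I would establish the key flow identity: if $X$ is left-invariant and $\gamma(t)=\phi_t^X(e)$ is the integral curve through $e$, then for any $g\in M$ the curve $t\mapsto g\gamma(t) = R_{\gamma(t)}(g)$ is the integral curve of $X$ through $g$. This follows by differentiating $t\mapsto L_g(\gamma(t))$ and using left-invariance $\d(L_g)_{\gamma(t)}(X_{\gamma(t)}) = X_{g\gamma(t)}$ together with $\dot\gamma(t)=X_{\gamma(t)}$; uniqueness of integral curves then forces $\phi_t^X(g)=g\,\gamma(t)=R_{\gamma(t)}(g)$ on the relevant domain, which is exactly the claimed formula $\phi_t^X=R_{\phi_t^X(e)}$. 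The immediate consequence is that $J_e^X\subseteq J_g^X$ for every $g$, so completeness (or half-completeness) is entirely decided by the maximal domain at $e$.

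For part (i), I would first argue that $X|_{\partial M}$ is tangent to $\partial M$. When $e\in\Int M$, Theorem \ref{thm:open_inv} gives $\G(M)\subset\Int M$, but more directly: the translation formula shows that the flow starting in $\partial M$ stays in $\partial M$ precisely when $\gamma(t)\in\partial M$; since $\partial M$ is a submonoid (normal boundary with $e\in\partial M$) or since $\partial M$ is preserved under right translation by interior considerations, I can conclude $g\gamma(t)\in\partial M$ whenever $g\in\partial M$. In the case $e\in\partial M$ with $X_e$ tangent to $\partial M$, the integral curve $\gamma$ through $e$ stays in the submonoid $\partial M$ (because $\partial M$ is an embedded submanifold without boundary that is invariant under $X$ near $e$, and $X_e\in T_e\partial M$), so $\gamma(t)\in\partial M$ for all $t$; then $R_{\gamma(t)}$ maps $\partial M$ to $\partial M$ and $\Int M$ to $\Int M$, giving tangency of $X$ along all of $\partial M$. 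For completeness itself, the standard Lie-group-style argument applies: since $\phi_t^X(g)=g\gamma(t)$ is defined on the same interval as $\gamma$, and $\gamma$ lives in a boundaryless manifold (either $\Int M$ or the submonoid $\partial M$) where $X$ is tangent, the usual uniform-time extension trick works — if $\gamma$ is defined on $(-\varepsilon,\varepsilon)$ then the group-like relation $\gamma(t+s)=\gamma(t)\gamma(s)$ (itself a consequence of the translation formula applied at $g=\gamma(t)$) lets me extend $\gamma$ to all of $\R$ by repeated composition, yielding completeness.

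For part (ii), where $X_e$ is inward- or outward-pointing at $e\in\partial M$, Remark \ref{rem:inward} already tells us $J_e^X$ can only be a half-closed interval $[0,\varepsilon)$ (inward) or $(-\varepsilon,0]$ (outward). I would show, in the inward case, that the maximal such interval is $[0,\infty)$: once the flow enters $\Int M$ for $t>0$, the curve $\gamma$ lives in the boundaryless open submanifold $\Int M$, and I can again use the semigroup relation $\gamma(t+s)=\gamma(t)\gamma(s)$ to extend indefinitely to the right, so $J_e^X=[0,\infty)$ and hence $J_g^X\supseteq[0,\infty)$ for all $g$, giving half-completeness. The outward case is symmetric under $t\mapsto -t$. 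The main obstacle I anticipate is the boundary bookkeeping in part (ii): I must carefully justify that the flow genuinely moves off $\partial M$ into $\Int M$ for small $t>0$ (so that the boundaryless extension argument applies) and that the semigroup relation, which is clean on a boundaryless manifold, survives at the single boundary time $t=0$; this requires invoking that $X_e$ being strictly inward-pointing means $\gamma(t)\in\Int M$ for all small $t>0$, after which the argument proceeds entirely in the boundaryless interior.
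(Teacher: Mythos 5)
Your core machinery coincides with the paper's and is sound: the identity $\phi_t^X(g)=g\,\gamma(t)=R_{\gamma(t)}(g)$, where $\gamma=\gamma_e^X$, obtained from left-invariance and uniqueness of integral curves; the resulting inclusion $J_e^X\subset J_g^X$; and the semigroup/translation trick that upgrades ``$J_e^X$ contains an open (resp.\ half-open) interval at $0$'' to $J_e^X=\R$ (resp.\ $[0,\infty)\subset J_e^X$). This covers the flow formula, completeness once tangency is settled, and part (ii). (One aside on (ii): you do not actually need $\gamma$ to remain in $\Int M$ for $t>0$ -- it may re-touch $\partial M$ tangentially -- but the repeated-composition extension works regardless, exactly as the paper's translation argument does.) The genuine gaps both sit in the tangency claim of part (i), and they are not cosmetic: when $e\in\partial M$, tangency of $X|_{\partial M}$ is precisely what guarantees that $J_e^X$ contains an open interval around $0$, so the completeness argument cannot even start without it.

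For $e\in\partial M$ with $X_e\in T_e\partial M$, your justification that $\gamma$ stays in $\partial M$ (``$\partial M$ is \dots invariant under $X$ near $e$'') is circular: invariance of $\partial M$ under $X$ near $e$ \emph{is} the local tangency you are trying to prove. Pointwise tangency at $e$ alone is genuinely insufficient: on $\mathbb H^2$ the (non-invariant) field $X=\partial_x-x\,\partial_y$ is tangent to $\partial\mathbb H^2$ at the origin, yet admits no integral curve through the origin on any nondegenerate interval, since the would-be solution has $y(t)=-t^2/2<0$ for $t\neq 0$. The paper's fix is algebraic and must come first: for every $g\in\partial M$, left-invariance gives $X_g=\d(L_g)_e(X_e)$, and since $\partial M$ is a submonoid, $L_g(\partial M)\subset\partial M$, so $\d(L_g)_e$ maps $T_e\partial M$ into $T_g\partial M$; hence $X|_{\partial M}$ is everywhere tangent, and only then does ODE theory on the boundaryless manifold $\partial M$ produce an open interval in $J_e^X$. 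For $e\in\Int M$, your argument is not salvageable as stated: ``$\partial M$ is a submonoid (normal boundary with $e\in\partial M$)'' contradicts the case hypothesis, and ``$\partial M$ is preserved under right translation by interior considerations'' is unjustified -- it would in effect require $\gamma(t)\in G(M)$, so that $R_{\gamma(t)}$ is a diffeomorphism, but that is Corollary \ref{cor:exp_grp}, which the paper deduces \emph{from} this theorem; note also that flows on manifolds with boundary need not behave like boundary-preserving diffeomorphisms (interior points can reach $\partial M$ in finite time, e.g.\ under $-\partial_x$ on $[0,\infty)$). The paper's argument here is different and clean: by $J_g^X\supset J_e^X$, every $g\in\partial M$ has $J_g^X$ containing an open interval around $0$, while by Remark \ref{rem:inward} an inward- or outward-pointing vector at $g$ would force $J_g^X$ to contain at most a half-closed interval at $0$; hence $X_g$ must be tangent to $\partial M$.
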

\begin{proof}
We first inspect the assumptions from (i): notice that if $e\in\Int M$, then $J_e^X$ clearly contains an open interval around zero; on the other hand, if $e\in\partial M$ and $X_e$ is tangent to $\partial M$, then $X|_{\partial M}$ must be everywhere tangent to $\partial M$ by left-invariance of $X$ and the fact that $\partial M$ is a Lie submonoid of $M$, so $J_e^X$ must again contain an open interval around zero. Furthermore, the assumption from (ii) that $X_e$ is either inward-pointing or outward-pointing implies that $J_e^X$ contains a half-open interval of the form $[0,\varepsilon)$ or $(-\varepsilon,0]$, respectively.

We next observe that for any $g\in M$, the composition $L_g\circ \gamma_e^X$ is an integral path of $X$ starting at $g$, so maximality of $J_{g}^X$ implies
\begin{align}
\label{eq:eunderg}
J_e^X\subset J_g^X,
\end{align} 
and also $\phi_t^X(g)=g\gamma_e^X(t)=R_{\gamma_e^X(t)}(g)$ for all $t\in J_e^X$. Notice that \eqref{eq:eunderg} now implies that in the case $e\in\Int M$, $J_g^X$ contains an open interval around zero for all $g\in M$, and in particular this holds for any $g\in \partial M$, so we conclude that $X|_{\partial M}$ must be tangent to $\partial M$, by virtue of Remark \ref{rem:inward}.

Let $\tau\in J_e^X$ and consider the affinely translated path 
\[\zeta^\tau\colon (J_e^X-\tau)\rightarrow M, \quad \zeta^\tau (t)=\gamma_e^X(t+\tau).\]
Since the maximal domain of an affinely translated integral path is just the affinely translated maximal domain, we get $J_e^X-\tau=J_{\zeta^\tau(0)}^X$.\footnote{We do not need the Lie monoid structure to prove this simple fact.} 
Together with \eqref{eq:eunderg}, this implies
\[
J_e^X+\tau\subset J_e^X,\ \text{for all }\tau\in J_e^X.
\]
This implies that if $J_e^X$ contains an open interval around zero, it must equal $\R$, and if $[0,\varepsilon)\subset J_e^X$ for some $\varepsilon>0$, then $[0,\infty)\subset J_e^X$; similarly for the outward-pointing case. Together with equation \eqref{eq:eunderg}, this proves our claims regarding completeness and half-completeness.
\end{proof}

\begin{cor}
If $M$ is a Lie monoid and $e\in \Int M$, then elements in $\partial M$ do not have full rank. Hence if also $\partial M\neq \emptyset$, then $M$ is not extendable to a Lie group.
\end{cor}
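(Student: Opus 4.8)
The plan is to establish the first assertion by a rank count and then obtain the second immediately from Lemma \ref{lem:extensions}. Since $M$ is a Lie monoid its object manifold is a point, so $\delta=\dim M$, and a morphism $g\in M$ has full rank exactly when both $\d(L_g)_e$ and $\d(R_g)_e$ are isomorphisms onto $T_gM$. Consequently, to prove that every $g\in\partial M$ is singular it suffices to show that its left rank already fails to be full, i.e.\ that $\rankl(g)=\rank\d(L_g)_e<\dim M$.

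The key is Theorem \ref{thm:complete}(i): because $e\in\Int M$, every left-invariant vector field $X$ on $M$ restricts to a field on $\partial M$ that is tangent to $\partial M$. Now recall from Lemma \ref{lem:closed_lie_bracket} that for a Lie monoid the evaluation $X\mapsto X_e$ identifies $\vf^L(M)$ with $A^L(M)=T_eM$; thus every $v\in T_eM$ arises as $X_e$ for a unique left-invariant $X$, whose value at $g$ is $X_g=\d(L_g)_e(v)$. Fixing $g\in\partial M$ and letting $v$ range over $T_eM$, the tangency conclusion gives $\d(L_g)_e(v)=X_g\in T_g(\partial M)$ for every $v$, so $\Im\d(L_g)_e\subseteq T_g(\partial M)$ and hence
\[
\rankl(g)=\dim\Im\d(L_g)_e\leq \dim T_g(\partial M)=\dim M-1<\delta.
\]
Therefore $g$ does not have full left rank, so it is singular.

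For the remaining claim I would argue by contradiction. A Lie group extending $M$ is in particular a Lie groupoid over a point, so Lemma \ref{lem:extensions} would force every morphism of $M$ to have full and constant rank. But if $\partial M\neq\emptyset$ there is a boundary morphism, which we have just shown to be singular --- a contradiction. Hence no such extension exists. The only real content lies in Theorem \ref{thm:complete}(i); once its conclusion is available, the geometric point is simply that tangency of all left-invariant fields to $\partial M$ confines $\Im\d(L_g)_e$ to the hyperplane $T_g(\partial M)$, and I anticipate no genuine obstacle beyond correctly invoking that theorem together with the identification $\vf^L(M)\cong T_eM$.
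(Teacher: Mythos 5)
Your proof is correct and follows essentially the same route as the paper: both hinge on Theorem \ref{thm:complete}(i) (tangency of all left-invariant vector fields to $\partial M$ when $e\in\Int M$) together with Lemma \ref{lem:extensions} for the non-extendability claim. The only difference is presentational --- the paper argues by contradiction, using the assumed full rank to extend an inward-pointing vector at a boundary point to a left-invariant field, whereas you argue directly that $\Im\d(L_g)_e\subseteq T_g(\partial M)$, which bounds $\rankl(g)$ by $\dim M-1$.
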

\begin{proof}
For a proof by contradiction, assume that there is a $g\in\partial M$ with full rank, and pick any inward-pointing vector $v\in T_g M$. Since $g$ has full rank, the vector $v$ is extendable to a unique left-invariant vector field $X$ on $M$, but now $X|_{\partial M}$ must be tangent to $\partial M$ by Theorem \ref{thm:complete}, contradicting our assumption that $v$ is inward-pointing. The second part of the corollary follows from Lemma \ref{lem:extensions}.
\end{proof}

An easy example of the last corollary in play is the Lie monoid $[0,\infty)$ for multiplication. This result may easily be generalized; in any Lie category $\C\rra\X$, a similar inclusion as \eqref{eq:eunderg} holds:
\begin{align}
\label{eq:unitunderg}
J_{1_{s(g)}}^X\subset J_g^X,
\end{align}
for any $g\in \C$ and any left-invariant vector field $X\in\vf^L(\C)$. The same argument as before shows that if $u(\X)\subset \Int\C$, then $X|_{\partial\C}$ is tangent to $\partial \C$, so we similarly obtain: 

\begin{cor}
If $\C\rra\X$ is a Lie category with $u(\X)\subset \Int \C$, then morphisms in $\partial \C$ do not have full rank. Hence if $u(\X)\subset\Int\C$ and $\partial \C\neq \emptyset$, then $\C$ is not extendable to a Lie groupoid.
\end{cor}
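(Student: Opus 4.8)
The plan is to leverage the previously established inclusion \eqref{eq:unitunderg} together with Theorem \ref{thm:complete} (or more precisely its Lie category analogue), mirroring the structure of the preceding corollary for Lie monoids. The key observation is that the hypothesis $u(\X)\subset \Int\C$ is exactly the assumption needed to guarantee that the maximal domain $J^X_{1_{s(g)}}$ of any integral curve starting at a unit contains an open interval around zero, since units sit in the interior and there is no boundary obstruction at a unit. First I would argue that if $u(\X)\subset \Int\C$, then every left-invariant vector field $X\in\vf^L(\C)$ restricts to a field on $\partial\C$ that is tangent to $\partial\C$; this is the direct generalization of the corresponding statement inside the proof of Theorem \ref{thm:complete}, obtained by combining \eqref{eq:unitunderg} with the local inward/outward-pointing observation recorded in Remark \ref{rem:inward}.

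For the main statement, I would proceed by contradiction in the same spirit as the Lie monoid corollary. Suppose some morphism $g\in\partial\C$ has full rank. Full left rank means $\d(L_g)_{1_{s(g)}}\colon \ker\d t_{1_{s(g)}}\to \ker\d t_g$ is an isomorphism, so any tangent vector $v\in \ker\d t_g\subset T_g\C$ -- in particular any inward-pointing one lying in the $t$-fibre direction -- is the image under $\d(L_g)_{1_{s(g)}}$ of a unique vector at the unit $1_{s(g)}$. By Lemma \ref{lem:closed_lie_bracket} this vector at the unit extends to a left-invariant vector field $X\in\vf^L(\C)$ with $X_g=v$. Since $u(\X)\subset\Int\C$, the field $X|_{\partial\C}$ must be tangent to $\partial\C$ by the first step, contradicting the fact that $v=X_g$ is inward-pointing at $g\in\partial\C$. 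The second assertion, non-extendability, then follows immediately: if $\C$ were extendable to a Lie groupoid, Lemma \ref{lem:extensions} would force all morphisms to have full rank, including those in the nonempty boundary $\partial\C$, which we have just shown is impossible.

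A small technical point I would need to address is the existence of an inward-pointing vector $v$ at $g$ that lies in $\ker\d t_g$, so that it is genuinely in the image of the full-rank map $\d(L_g)_{1_{s(g)}}$ and thus extendable to a left-invariant field. This is where the regular boundary assumption does the work: by \eqref{eq:fibrebdr} the $t$-fibre $\C^{t(g)}$ is a neat submanifold with $\partial(\C^{t(g)})=\C^{t(g)}\cap\partial\C$, so at $g\in\partial(\C^{t(g)})$ there exist vectors in $\ker\d t_g=T_g\C^{t(g)}$ that are inward-pointing relative to $\C^{t(g)}$, and these are precisely the vectors reachable by the left-invariant extension. I expect this verification -- that the inward-pointing vector can be chosen within the $t$-fibre -- to be the only genuine obstacle; everything else is a faithful transcription of the Lie monoid argument, with $e$ replaced by $1_{s(g)}$ and the monoid multiplication replaced by left translation within the appropriate target fibres.
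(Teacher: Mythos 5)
Your proposal is correct and follows essentially the same route as the paper's own proof: using the regular boundary to pick an inward-pointing vector in $\ker\d t_g$, pulling it back through $\d(L_g)_{1_{s(g)}}^{-1}$, extending to a left-invariant vector field (the paper makes the intermediate partition-of-unity extension to a section of $u^*\ker\d t$ explicit), and contradicting the tangency of $X|_{\partial\C}$ to $\partial\C$ established from \eqref{eq:unitunderg}, with non-extendability then following from Lemma \ref{lem:extensions}.
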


\begin{proof}
As before, suppose there is a morphism $g\in\partial\C$ with full rank. Regularity of the boundary implies $\partial (\C^{t(g)})=\partial\C\cap \C^{t(g)}$, so we may pick an inward-pointing vector $v\in \ker \d t_g$. Since $g$ has full rank, we can form the vector $\d(L_g)_{1_{s(g)}}^{-1}(v)\in \ker\d t_{1_{s(g)}}$, extend it to a section of $u^*\ker\d t$ using partitions of unity, and finally extend it to a left-invariant vector field $X$ on $\C$. Since $X_g=v$, we arrive to a contradiction to the fact that $X|_{\partial \C}$ is tangent to $\partial \C$. 
\end{proof}



\begin{rem}
\label{rem:gpd_empty_bdry}
In the case when $g$ is an invertible morphism of a Lie category, the inclusion \eqref{eq:unitunderg} is actually an equality, which follows from the fact that $L_{g^{-1}}\circ\gamma^X_g$ is an integral path of $X$ starting at $1_{s(g)}$. 
\end{rem}

We now present the promised characterization of completeness of invariant vector fields on a Lie category. The proof is a small adaptation of the one from the theory of Lie groupoids.

\begin{prop}
Let $\C\rra\X$ be a Lie category with a normal boundary, and let $\alpha\in \Gamma^\infty(A^L(\C))$ be a section of its left Lie algebroid. Suppose either $u(\X)\subset \Int \C$, or that $u(\X)\subset\partial \C$ and $\alpha^L|_{u(\X)}$ is tangent to $\partial \C$. Then $\alpha^L\in\vf^L(\C)$ is complete if, and only if, $\rho^L(\alpha)\in\vf(\X)$ is complete.
\end{prop}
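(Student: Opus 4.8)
The plan is to exploit the single structural identity $s\circ L_g = s|_{\C^{s(g)}}$, which upon differentiation at the unit gives $\d s_g(\alpha^L(g)) = \rho^L(\alpha)_{s(g)}$; in other words $\alpha^L$ is $s$-related to $\rho^L(\alpha)$, so the source map carries every integral curve of $\alpha^L$ to an integral curve of $\rho^L(\alpha)$. First I would record a reduction: by the inclusion \eqref{eq:unitunderg}, $J^X_{1_{s(g)}}\subset J^X_g$ for $X=\alpha^L$, so $\alpha^L$ is complete as soon as every integral curve through a unit $1_x=u(x)$ is defined on all of $\R$. Under the standing hypotheses the integral curve through a unit stays within a boundaryless manifold: if $u(\X)\subset\Int\C$, then the argument of Theorem \ref{thm:complete} (left-invariance together with $\partial\C$ being a subcategory) shows $\alpha^L|_{\partial\C}$ is tangent to $\partial\C$, so no unit-curve starting in $\Int\C$ can reach $\partial\C$; and if $u(\X)\subset\partial\C$ with $\alpha^L|_{u(\X)}$ tangent to $\partial\C$, the same reasoning confines the unit-curves to $\partial\C$. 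Either way the relevant integral curves have open domains and ordinary boundaryless ODE theory applies, in the spirit of Remark \ref{rem:inward}.

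The forward implication is then immediate: if $\alpha^L$ is complete, then for each $x\in\X$ the curve $\gamma^{\alpha^L}_{1_x}$ is defined on $\R$, and $s\circ\gamma^{\alpha^L}_{1_x}$ is the integral curve of $\rho^L(\alpha)$ through $x=s(1_x)$, hence also defined on $\R$; since every $x$ is the source of a unit, $\rho^L(\alpha)$ is complete.

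For the converse I would assume $\rho^L(\alpha)$ complete, fix $x\in\X$, and show the maximal domain $(a,b)$ of $\gamma:=\gamma^{\alpha^L}_{1_x}$ is all of $\R$. Suppose $b<\infty$. The projected curve $c=s\circ\gamma$ is the integral curve of $\rho^L(\alpha)$ through $x$, hence extends continuously with $c(t)\to x_b:=c(b)$ as $t\to b^-$. The crux is a uniform time of existence near the unit $1_{x_b}$: since the flow domain of $\alpha^L$ is open and the flow is defined on some interval $(-\varepsilon,\varepsilon)$ at $1_{x_b}$, a compactness argument yields $\varepsilon'\in(0,\varepsilon)$ and a neighborhood $W$ of $1_{x_b}$ on which $\phi^{\alpha^L}_\tau$ is defined for all $|\tau|<\varepsilon'$; by continuity of $u$ there is a neighborhood $V\ni x_b$ with $u(V)\subset W$. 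Choosing $t_0<b$ with $c(t_0)\in V$ and $b-t_0<\varepsilon'$, left-invariance gives $\gamma(t_0+\tau)=L_{\gamma(t_0)}\bigl(\phi^{\alpha^L}_\tau(1_{c(t_0)})\bigr)$, which is defined for all $|\tau|<\varepsilon'$ and so extends $\gamma$ past $b$, contradicting maximality. The same argument rules out $a>-\infty$, so $J^{\alpha^L}_{1_x}=\R$ and completeness of $\alpha^L$ follows from the reduction above.

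The main obstacle is precisely this uniform-existence step in the converse: one must upgrade the pointwise existence of the flow at $1_{x_b}$ to existence on a whole $u$-image neighborhood, and then transport it by left translation to catch up with the escaping curve $\gamma$. The only other point requiring care is the boundary bookkeeping that guarantees all unit-curves remain in a boundaryless stratum, so that the flow domain is genuinely open; this is handled exactly as in Theorem \ref{thm:complete}.
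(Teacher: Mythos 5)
Your proof is correct, and your forward direction is essentially the paper's: both rest on the $s$-relatedness $\d s_g(\alpha^L_g)=\rho^L(\alpha)_{s(g)}$ (the paper builds the flow of $\rho^L(\alpha)$ from local sections of $s$, you simply project the curves through units, which works since $u$ is a global section of $s$). Your converse, however, is a genuinely different argument. The paper extends an \emph{arbitrary} integral path $\gamma\colon(a,b)\rightarrow\C$ by taking an integral path $\delta$ of $\alpha^L$ through the unit $1_{(s\circ\gamma)(b)}$, shrinking its domain so that it lies in the core $\G(\C)$ --- this invokes Theorem \ref{thm:open_inv} --- and gluing via $\bar\gamma(t)=\gamma(b-\tfrac{\varepsilon}{2})\,\delta(b-\tfrac{\varepsilon}{2})^{-1}\delta(t)$, which crucially requires \emph{inverting} $\delta(b-\tfrac{\varepsilon}{2})$. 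You avoid invertibility and the core altogether: you first reduce to unit curves via \eqref{eq:unitunderg}, then use openness of the flow domain near $1_{x_b}$ to get a uniform existence time, and ``restart'' the escaping curve through the identity $\gamma(t_0+\tau)=L_{\gamma(t_0)}\bigl(\phi^{\alpha^L}_\tau(1_{c(t_0)})\bigr)$, which is the same left-invariance identity underlying \eqref{eq:unitunderg}. What each buys: the paper's gluing is shorter granted Theorem \ref{thm:open_inv} and treats all integral paths at once, while your argument is more elementary and self-contained --- it never uses invertible morphisms, so it would survive in settings where one cannot (or prefers not to) invoke openness of $\G(\C)$. One point in your write-up deserves sharper wording: when $u(\X)\subset\partial\C$, the flow domain of $\alpha^L$ is \emph{not} open in $\R\times\C$; openness holds for the restricted field $\alpha^L|_{\partial\C}$ in $\R\times\partial\C$, so the uniform-existence neighborhood $W$ of $1_{x_b}$ must be taken inside $\partial\C$. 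This suffices for your argument, since $u(V)\subset\partial\C$ and the unit curves stay in $\partial\C$ by the tangency established in Theorem \ref{thm:complete}, but as stated the phrase ``the flow domain of $\alpha^L$ is open'' is only literally true in the stratum.
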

\begin{proof}
First note we have already argued that under given assumptions, the restriction $\alpha^L|_{\partial \C}$ is tangent to $\partial\C$, implying that the maximal domain of any integral path of $\alpha^L$ is an open interval.

For the forward implication, note that $\alpha^L$ and $\rho^L(\alpha)$ are $s$-related, so that if $\phi^{\alpha^L}_t$ is defined for some $t\in\R$, then so is $\phi^{\rho^L(\alpha)}_t$, by the fact that $s$ is a surjective submersion.
\[\begin{tikzcd}
	\C & \C \\
	\X & \X
	\arrow["s"', from=1-1, to=2-1]
	\arrow["{\phi^{\alpha^L}_t}", from=1-1, to=1-2]
	\arrow["{\phi^{\rho^L(\alpha)}_t}"', from=2-1, to=2-2]
	\arrow["s", from=1-2, to=2-2]
\end{tikzcd}\]
Indeed, take an open cover $(U_i)_i$ of $\X$ by domains of local sections $\sigma_i\colon U_i\rightarrow \C$ of $s$, and define $\vartheta_i\colon \R\times U_i\rightarrow \X$ as $\vartheta_i(t,x)=(s\circ\phi_t^{\alpha^L}\circ\sigma)(x)$. It is straightforward to check that for any $x\in U_i$, the map $t\mapsto \vartheta_i(t,x)$ is the integral path of $\rho^L(\alpha)$ starting at $x$, so the maps $\vartheta_i$ collate to the global flow $\phi^{\rho^L(\alpha)}\colon\R\times\X\rightarrow\X$ of $\rho^L(\alpha)$, by uniqueness of integral paths.

For the converse implication, suppose $\rho^L(\alpha)$ is complete, and let $\gamma\colon (a,b)\rightarrow \C$ be an integral path of $\alpha^L$; then $s\circ\gamma$ is an integral path of $\rho^L(\alpha)$, thus $s\circ\gamma$ admits a unique extension to an integral path of $\rho^L(\alpha)$ defined on whole $\R$, which we again denote by $s\circ\gamma$, so the expression $(s\circ\gamma)(b)$ is defined. There now exists a path $\delta\colon (b-\varepsilon,b+\varepsilon)\rightarrow \C$, which is an integral path of $\alpha^L$, such that $\delta(b)=1_{(s\circ\gamma)(b)}$; importantly, we may assume that $\varepsilon$ is small enough that $\delta$ maps into $\G(\C)$, since the latter is either open in $\Int\C$ or in $\partial \C$ by Theorem \ref{thm:open_inv}. We define our wanted extension $\bar\gamma\colon (a,b+\varepsilon)\rightarrow 
\C$ of $\gamma$ as
\[
\bar\gamma(t)=
\begin{cases}
\gamma(t)&\text{if }t\in(a,b),\\
\gamma(b-\frac{\varepsilon}2)\delta(b-\frac\varepsilon 2)^{-1}\delta(t)&\text{if }t\in(b-\varepsilon,b+\varepsilon).
\end{cases}
\]
Since both $s\circ\gamma$ and $s\circ\delta$ are integral paths of $\rho^L(\alpha)$ valued $(s\circ\gamma)(b)$ at $b$, they coincide on their common domain, so that $s(\gamma(b-\frac\varepsilon 2))=s(\delta(b-\frac\varepsilon 2))$, and moreover $\delta$ must lie in $\C^{(s\circ\gamma)(b)}$ since $\alpha^L$ is tangent to $t$-fibres, which altogether implies that the multiplication in the definition of $\bar \gamma$ is well-defined. Finally, $\bar \gamma$ is in fact an integral path of $\alpha^L$ since for any $t\in (b-\varepsilon,b+\varepsilon)$ there holds:
\[
\bar\gamma'(t)=\d{\big(L_{\gamma(b-\frac\varepsilon 2)\delta(b-\frac\varepsilon 2)^{-1}}\big)}_{\delta(t)}(\delta'(t))=\alpha^L_{\gamma(b-\frac\varepsilon 2)\delta(b-\frac\varepsilon 2)^{-1}\delta(t)}=\alpha^L_{\bar\gamma(t)}.
\]
By uniqueness of integral paths, the two partial definitions of $\bar\gamma$ coincide on their common domain, from which we conclude that $\bar\gamma$ indeed extends $\gamma$. Similarly can be done for the other endpoint, showing that the integral path $\gamma$ can be extended to the whole $\R$.
\end{proof}

Let us now turn back to Lie monoids. As a corollary of Theorem \ref{thm:complete}, we can generalize the exponential map from the theory of Lie groups to Lie monoids. First off, the following says that when $e\in\Int M$, the map $T_eM\rightarrow M$, defined as $v\mapsto \phi^{v^L}_1(e)$ is just the usual exponential map on $G(M)$.

\begin{cor}
\label{cor:exp_grp}
Let $M$ be a Lie monoid with $e\in \Int M$. The image of the integral path $\gamma_e^X$ of any left-invariant vector field $X$ on $M$ is contained in the core $G(M)$.
\end{cor}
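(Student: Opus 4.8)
The plan is to read off invertibility of each point $\gamma_e^X(t)$ directly from the explicit form of the flow of $X$ provided by Theorem~\ref{thm:complete}. First I note that the hypothesis $e\in\Int M$ means precisely that $M$ satisfies condition (i) of Definition~\ref{def:normal_bdry}, so $M$ has a normal boundary and Theorem~\ref{thm:complete}(i) applies. It tells us that $X$ is complete --- hence $\gamma_e^X$ is defined on all of $\R$ --- and that its flow is the right translation $\phi_t^X=R_{\gamma_e^X(t)}$ for every $t\in\R$.

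Next I combine this formula with the one-parameter group law $\phi_{s}^X\circ\phi_{t}^X=\phi_{s+t}^X$ of a complete flow. Evaluating at the unit $e$ and using $\phi_t^X(e)=\gamma_e^X(t)$ together with $R_{\gamma_e^X(s)}(h)=h\,\gamma_e^X(s)$, I obtain
\[
\gamma_e^X(s+t)=R_{\gamma_e^X(s)}\big(\gamma_e^X(t)\big)=\gamma_e^X(t)\,\gamma_e^X(s),
\]
and by the symmetric computation (composing in the other order) also $\gamma_e^X(s+t)=\gamma_e^X(s)\,\gamma_e^X(t)$. In other words $\gamma_e^X$ is a one-parameter homomorphism $(\R,+)\to M$ with commuting image. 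Setting $s=-t$ and using $\gamma_e^X(0)=e$ then yields
\[
\gamma_e^X(t)\,\gamma_e^X(-t)=e=\gamma_e^X(-t)\,\gamma_e^X(t),
\]
so $\gamma_e^X(-t)$ is a two-sided inverse of $\gamma_e^X(t)$. Hence $\gamma_e^X(t)$ is invertible for every $t\in\R$, i.e.\ the image of $\gamma_e^X$ lies in $G(M)$.

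I do not expect a genuine obstacle here: once the flow is identified as a right translation, the result is essentially forced. The only points requiring care are (a) invoking completeness so that $\gamma_e^X(-t)$ is defined for all $t$, which is exactly what lets the candidate inverse exist globally, and (b) keeping track of the order of multiplication, since $M$ need not be abelian; here the candidate inverse $\gamma_e^X(-t)$ is handed to us directly by the flow, so I verify the two defining inverse identities by hand rather than appealing to Proposition~\ref{prop:inv}.
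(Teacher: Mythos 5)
Your proof is correct and takes essentially the same route as the paper: both invoke Theorem~\ref{thm:complete} to get completeness and the identification $\phi_t^X=R_{\gamma_e^X(t)}$, then combine this with the one-parameter group law of the flow at $s=-t$ to exhibit $\gamma_e^X(-t)$ as a two-sided inverse of $\gamma_e^X(t)$. The paper's version is just a compressed form of your computation, reading $\phi_{-t}^X(e)\phi_t^X(e)=\phi_t^X(\phi_{-t}^X(e))=e$ for all $t\in\R$ directly.
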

\begin{proof}
By Theorem \ref{thm:complete}, there holds $\phi_{-t}^X(e)\phi_t^X(e)=\phi_t^X(\phi_{-t}^X(e))=e$ for any $t\in \R$, so $\phi_{-t}^X(e)$ is the inverse of $\phi_t^X(e)$.
\end{proof}

On the other hand, Theorem \ref{thm:complete} also enables us to define the exponential map for Lie monoids with boundaries:
\begin{defn}
Let $M$ be a Lie monoid such that $\partial M$ is its submonoid, $e\in\partial M$. Let $T_e^{\plus}M\subset T_eM$ denote the subset consisting of inward-pointing vectors in $T_e M$ and the vectors in $T_e(\partial M)$. The \textit{exponential map} on $M$ is then defined as
\begin{align*}
\exp\colon T_e^{\plus} M\rightarrow M,\quad \exp(v)=\phi^{v^L}_1(e).
\end{align*}
\end{defn}
\begin{rem}
In any boundary chart centered at $e$, $T_e^{\plus}M$ is identified with the closed upper half-space $\mathbb H^{\dim M}$. We observe that $T_e^{\plus}M$ possesses an algebraic structure of a semimodule over a semiring $[0,\infty)$.\footnote{A semiring $R$ satisfies all the axioms of a ring, except the existence of additive inverses. Because of this, we must additionally impose $0\cdot a=0=a\cdot 0$ for all $a\in R$. We amend modules to obtain semimodules in precisely the same way. } 

As before, Corollary \ref{cor:exp_grp} ensures that the restriction of $\exp$ to the vectors tangent to $\partial M$, is precisely the usual exponential map of the Lie group $G(M)$.
\end{rem}

Similar results to those from the theory of Lie groups can be obtained for the exponential map as defined above, by using an identical approach to the respective proofs, but working with one-sided derivatives. We leave the following results for the reader as an exercise:
\begin{enumerate}
\item Let $v\in T^{\plus}_eM$ and $t\geq 0$. By rescaling lemma, $\phi^{v^L}_{tr}(e)=\phi_r^{tv^L}(e)$ for all $r\geq 0$, and setting $r=1$ we obtain
\[
\exp(tv)=\phi_t^{v^L}(e).
\]
\item The map $\exp$ is a smooth, and there holds $\d{(\exp)}_e=\id_{T_eM}$. Since $\exp$ maps $\exp(T_e(\partial M))\subset \partial M$ by Theorem \ref{thm:complete} (i), one can apply the inverse map theorem for maps between manifolds with boundaries to conclude that $\exp$ is a local diffeomorphism at the point $0\in T_e^{\plus} M$. 
\item Considering the Lie monoid $[0,\infty)$ for addition, any smooth homomorphism $\alpha\colon [0,\infty)\rightarrow M$ of Lie monoids is called a \textit{one-parametric submonoid} of $M$. For any $v\in T_e^{\plus}M$, the map $t\mapsto \exp(tv)$ is a one-parametric submonoid of $M$, and any one-parametric submonoid $\alpha$ can be written as $\alpha(t)=\exp(t\dot\alpha(0))$, where $\dot\alpha(0)$ denotes the one-sided derivative of $\alpha$ at zero. 
\end{enumerate} 

A consequence of point (iii) above is \textit{naturality} of $\exp$, i.e.\ if $\phi\colon M\rightarrow N$ is a morphism between Lie monoids with normal boundaries, such that the units of $M$ and $N$ are contained in the respective boundaries, then for any $v\in T_e^{\plus}M$, the map $\alpha(t)= \phi(\exp_M(tv))$ defines a one-parametric submonoid of $N$ with $\dot\alpha(0)=\d\phi(v)$, so we obtain that the following diagram commutes.
\[\begin{tikzcd}
	M & N \\
	{T_e^{\plus}M} & {T_e^{\plus}N}
	\arrow["\phi", from=1-1, to=1-2]
	\arrow["\d\phi", from=2-1, to=2-2]
	\arrow["{\exp_M}", from=2-1, to=1-1]
	\arrow["{\exp_N}"', from=2-2, to=1-2]
\end{tikzcd}\]

\section{An application to physics: Statistical Thermodynamics}
\label{sec:std}
The interpretation that morphisms correspond to physical processes, and objects to physical states, can be applied to yield a rigorous approach to statistical physics, which we will now demonstrate. We will first focus purely on categorical aspects, and then consider differentiability.

Suppose we are given an isolated physical system consisting of an unknown number of particles, each of which can be in one of the $n+1$ a-priori given \textit{microstates}, which we will index by
\[
i\in\set{0,\dots,n}.
\]
Since the number of particles in our system is unknown and often large, we need to work with tuples of probabilities $(p_0,\dots,p_n)$, where each $p_i$ is the probability that a particle, chosen at random, is in the $i$-th microstate. Any tuple of probabilities $(p_i)_{i=0}^n$ is subjected to the constraint
\[
\textstyle\sum_i p_i=1,
\]
and we will refer to any such tuple $p=(p_i)_i$ as a \textit{configuration} of the system, which is just a probability distribution on the finite set of microstates above. The set of all configurations of our system is thus the standard $n$-simplex,
\[
\Delta^{n}=\set[\big]{(p_0,\dots,p_n)\in [0,1]^{n+1}\given \textstyle\sum_i p_i=1}.
\]
We associate to any configuration $(p_i)_i$ of our system its expected surprise,
\[
S(p_i)_i=-\sum_i p_i\log p_i,
\]
which is called the \textit{entropy} of the configuration $(p_i)_i$. Letting $f(x)=x\log x$, we find $\lim_{x\rightarrow 0^{\plus}}f(x)=0$, so $f$ may be extended to $[0,\infty)$ by defining $f(0)=0$, implying that $S\colon \Delta^{n}\rightarrow \R$ is defined on whole $\Delta^n$ and continuous. 

The construction of the space of morphisms between different configurations of our system is now an application of the \textit{second law of thermodynamics}: 
\begin{quote}
\vspace{0.5em}
\textit{A process in an isolated physical system is feasible if, and only if, the change of entropy pertaining to the process is non-negative.}
\vspace{0.5em}
\end{quote}
In accord with the second law, we define
\[
\D=\set*{(p_i)_i\rightarrow (q_i)_i\given S(q_i)_i- S(p_i)_i\geq 0}.
\]
In other words, $\D$ consists of pairs $(q,p)\in \Delta^{n}\times \Delta^{n}$ of configurations, such that the entropy of the target configuration $q$ is no less than that of the source $p$. That $\D\rra \Delta^{n}$ is a category follows from the fact that the map
\[
\delta S\colon \Delta^{n}\times\Delta^{n}\rightarrow \R,\quad \delta S(q,p)=S(q)-S(p),
\]
is a functor\footnote{This is aligned with the moral that entropy should be inherently perceived as a categorical concept, which was first adopted by Baez et al.; $\delta S$ is in fact the only map (up to a multiplicative scalar) which is functorial, convex-linear and continuous, see \cite{baez2011} for details.} from the pair groupoid of $\Delta^{n}$ to the group $\R$ for addition. Notice that the invertible morphisms in $\D$ are precisely $\delta S^{-1}(0)$, which are just the processes with zero entropy change.

In the differentiable setting, we need to make certain adjustments to our category $\D\rra \Delta^{n}$, since it is not a Lie category. First, we note that $\Delta^n$ is a manifold with corners, and $S$ is not smooth at its boundary $\partial\Delta^{n}$ since $\lim_{x\rightarrow 0^{\plus}}(x\log x)'=-\infty$, which is why we first restrict our attention to the interior $\Int\Delta^{n}.$ 
Secondly, we notice that the category $\D\rra\Delta^{n}$ has a terminal object, as shown by the following.

\begin{claim}
The only critical point of entropy $S|_{\Int\Delta^{n}}$ is given by the so-called microcanonical configuration, i.e.\ $p_i^{\mu}=\frac 1{n+1}$ for all $i$. This is a maximum of $S$.
\end{claim}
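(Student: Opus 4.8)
The plan is to treat this as a constrained optimization problem on the open manifold $\Int\Delta^n$, which is an open subset of the affine hyperplane $H=\set{p\in\R^{n+1}\given \sum_i p_i=1}$. Since all coordinates are strictly positive on $\Int\Delta^n$ and $x\mapsto x\log x$ is smooth on $(0,\infty)$, the map $S$ is smooth there, so the critical points of $S|_{\Int\Delta^n}$ are exactly the points of $\Int\Delta^n$ where $\nabla S$ is parallel to the normal of $H$. First I would apply Lagrange multipliers with the constraint $g(p)=\sum_i p_i-1$: computing $\partial S/\partial p_i=-\log p_i-1$ and $\partial g/\partial p_i=1$, the Lagrange condition $-\log p_i-1=\lambda$ must hold for every $i$ with a single common multiplier $\lambda$. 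This forces $\log p_i=-1-\lambda$ to be independent of $i$, so all the $p_i$ coincide, and the constraint $\sum_i p_i=1$ then pins them down to $p_i=\tfrac1{n+1}$. Equivalently, one may eliminate $p_0=1-\sum_{j\geq1}p_j$ and observe that $\partial S/\partial p_j=\log(p_0/p_j)$ vanishes for all $j\geq1$ precisely when each $p_j$ equals $p_0$; either route isolates the microcanonical configuration as the unique critical point.

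To see that it is a maximum, I would invoke strict concavity rather than computing the constrained Hessian. The function $f(x)=-x\log x$ satisfies $f''(x)=-1/x<0$ on $(0,\infty)$, hence is strictly concave, so $S(p)=\sum_i f(p_i)$ is a sum of strictly concave functions of the individual coordinates and is therefore strictly concave on the convex set $\Int\Delta^n$. A strictly concave function on a convex domain admits at most one critical point, which, when it exists, is automatically its unique global maximum; since the previous paragraph exhibits such a critical point, the microcanonical configuration is the global maximum of $S|_{\Int\Delta^n}$.

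The argument is essentially routine, and the only point requiring mild care is the interplay between the ambient concavity and the linear constraint. The subtlety is that one must restrict $S$ to the affine hyperplane $H$ before invoking uniqueness of the maximizer: strict concavity is inherited by this restriction because $H$ is convex and the restriction of a strictly concave function to a convex subset remains strictly concave. This guarantees uniqueness of the critical point on $\Int\Delta^n$ and rules out any other critical behaviour, completing the proof.
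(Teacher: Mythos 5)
Your proof is correct. For the critical-point half, you and the paper run the same computation in different guises: the paper eliminates $\d p_0=-\sum_{i=1}^{n}\d p_i$ and reads off that $\d S$ vanishes on $\Int\Delta^n$ precisely when $p_i=p_0$ for all $i$, which is your Lagrange condition with the multiplier eliminated (an equivalence you note yourself). The genuine divergence is in the maximality half: the paper does not prove it (``that this is a local maximum is left as an exercise,'' together with an unproven remark that $S(p^\mu)$ exceeds the values of $S$ on $\partial\Delta^{n}$), whereas you settle it by strict concavity. Since $f(x)=-x\log x$ satisfies $f''(x)=-1/x<0$, the sum $S=\sum_i f(p_i)$ is strictly concave on the convex set $\Int\Delta^n$ (strictness survives the summation because two distinct points differ in at least one coordinate), and a critical point of a differentiable strictly concave function on a relatively open convex domain is its unique global maximum: concavity gives $S(q)\le S(p)+\d S_p(q-p)$, and $\d S_p$ annihilates $q-p$ because $q-p$ is tangent to the hyperplane $H$, which is exactly why your insistence on restricting to $H$ first is the right point of care. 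Your route therefore yields strictly more than the paper establishes: global (not merely local) maximality on $\Int\Delta^n$, uniqueness of the critical point re-derived for free, and no Hessian computation. The one item it does not address is the paper's side remark comparing $S(p^\mu)$ with $S|_{\partial\Delta^{n}}$; that comparison is not needed for the claim as stated (which concerns $S|_{\Int\Delta^{n}}$), and in any case it follows from continuity of $S$ on $\Delta^{n}$ together with concavity up to the boundary.
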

\begin{proof}
Constraint $\sum_i p_i=1$ implies $\sum_i \d p_i=0$ and $\d p_0=-\sum_{i=1}^{n}\d p_i$, so we have
\[
\d S_{(p_i)_i}=-\sum_i(1+\log p_i)\d p_i=-\sum_i\log p_i \d p_i=-\sum_{i=1}^{n}(\log p_i-\log p_0)\d p_i,
\]
which vanishes if, and only if, $p_i=p_0$ for all $i$, i.e.\ $p_i=\frac 1{n+1}$. That this is a local maximum is left as an exercise, and it is not hard to see that the value of $S$ at $(p_i^\mu)_i$ is greater than the value of $S$ on $S|_{\partial\Delta^{n}}$.
\end{proof}

The configuration $p^\mu$ has little physical importance -- for example, we will see below that it can be interpreted as the configuration that is attained at thermodynamical equilibrium at infinite temperature. We will thus remove it from the interior of our $n$-simplex of objects, and define our space of objects to be
\[
\X=\Int\Delta^n- \set{p^\mu},
\]
which is a smooth manifold without boundary. Moreover, we define the space of morphisms over $\X$ as the set 
\[
\C=(\delta S|_{\X\times\X})^{-1}([0,\infty))=\set*{(q,p)\in\X\times\X\given S(q)- S(p)\geq 0}.
\] 
As before, $\C\rra \X$ is a subcategory of the pair groupoid on $\X$. By virtue of Example \ref{ex:preimage_subcat}, to prove that $\C\rra\X$ is a Lie category, we first need to check $\delta S|_{\X\times\X}$ has a regular value 0. By above claim, the only critical point of the map $\delta S$ is the identity morphism $(p^\mu,p^\mu)\in\D$, which is not in $\C$.

Secondly, we have to show that $\C\rra\X$ has a regular boundary, i.e.\ that $s|_{\partial\C},t|_{\partial \C}$ are submersions. To this end, first note that for any $(q,p)\in\partial \C$, 
\begin{align*}
&T_{(q,p)}\partial \C=\ker\d(\delta S)_{(q,p)}\\
&=\set*{(v,w)\in T_q\X\oplus T_p \X\given \textstyle\sum_{i=1}^{n}(\log q_i-\log q_0)v_i=\sum_{i=1}^{n}(\log p_i-\log p_0)w_i}.
\end{align*}
Let $v\in T_q \X$. Since $p_i\neq p_0$ for some $i$, we define
\[
w_i=\frac{\textstyle\sum_{i=1}^{n}(\log q_i-\log q_0)v_i}{\log p_i-\log p_0}
\]
and now we let $w=(w_0,0,\dots,0,w_i,0,\dots,0)$ where $w_0=-w_i$ is set to ensure $w\in T_p\X$, thus we obtain the wanted pair $(v,w)\in T_{(q,p)}\partial\C$ with $\d t(v,w)=v$; we similarly show that $s|_{\partial \C}$ is a submersion. Furthermore, $\C\rra\X$ satisfies all the conclusions from Lemma \ref{lem:extensions}, since the pair groupoid $\X\times \X$ is its weakly étale extension; in particular, the left and right Lie algebroids of $\C$ are isomorphic to $T\X\approx\X\times \R^n$.

The question interesting for physics is: what is the configuration $p^{\epsilon}\in\X$ at which the system attains a thermodynamical equilibrium? It is well-known that the answer to this question is obtained using the so-called \textit{Gibbs algorithm}, which we provide here for completeness. To this end, we need additional a-priori given data, namely each microstate $i$ of our system has an a-priori assigned quantity $E_i$, called the \textit{energy} of $i$-th microstate. To derive the wanted configuration $p^\epsilon$ we utilize the so-called \textit{principle of maximum entropy}:

\begin{quote}
\vspace{0.5em}
\textit{A state is at thermodynamical equilibrium if, and only if, it maximizes the entropy with respect to the systemic constraints.}
\vspace{0.5em}
\end{quote}
That is, we must find the constrained extremum of $S$ with respect to constraints $\sum_i p_i=1$ and $\sum_i p_i E_i=E(p_i)_i$. Here $E\colon\X\rightarrow \R$ is a function on the object space, determined by the first law of thermodynamics up to an additive constant, as we will see below -- this will impose thermodynamical considerations onto the statistical description of our system.

To apply the method of Lagrange multipliers, define the function $\hat S\colon \X\rightarrow \R$,
\[
\hat S(p_i)_i=S(p_i)_i-\lambda_1\left(E(p)-\textstyle\sum_i p_i E_i\right)-\lambda_2(1-\textstyle\sum_i p_i).
\]
Requiring $\frac{\partial \hat S}{\partial p_i}=0$ yields 
$
p_i=\frac 1 Ze^{\lambda_1 E_i},
$
where we have defined $Z=e^{-(1+\lambda_2)}$. The constraint 
$
\textstyle\sum_i p_i=1$ then reads 
\[
Z=e^{-(1+\lambda_2)}=\textstyle\sum_i e^{\lambda_1 E_i}.
\]
On the other hand, the first law of thermodynamics for systems where no work is exerted reads
\[
\d E=kT\d S
\]
where $T$ is the temperature (a macroscopic external constant) at which the system is held, and $k$ denotes the Boltzmann constant. Writing out the total differentials $\d E$ and $\d S$ gives $\lambda_1=-\frac 1 {kT}$, and so we finally obtain the \textit{equilibrium configuration}:
\[
p_i^\epsilon=\frac 1 {Z} e^{-\frac{E_i}{kT}},\quad Z=\sum_i e^{-\frac{E_i}{kT}}.
\]
We observe that in this categorical framework, the configurations from which it is possible to attain the equilibrium configuration $p^\epsilon$, are now simply expressible as 
$
s(t^{-1}({p^\epsilon}))=\set*{p\in\X\given S(p^\epsilon)\geq S(p)}.
$

\section*{Further research}
Although we hope to have succeeded in portraying the richness of Lie categories and their potential in physics, we admit that we have not exhausted all research options regarding them. We state some of them here, and note that they provide possibilities for future research.

\begin{enumerate}
\item As stated in Remark \ref{rem:questions}, the question remains whether all Lie monoids are parallelizable, and whether $\C_x^y$ is a smooth manifold for given objects $x,y\in\X$ of a Lie category $\C\rra\X$.
\item Does there exist a class of Lie algebroids that cannot be integrated to a Lie groupoid, but can be integrated to a Lie category? 
\item Remark \ref{rem:base_boundary} shows the need for considering Lie categories whose object manifold has a boundary, or more generally, corners. Another example of such a Lie category should be the flow of a smooth vector field on a manifold with boundary, generalizing the flow Lie groupoid of a vector field on a boundaryless manifold. 
\item Infinite-dimensional Lie categories. The exterior bundle $\Lambda(E)$ of a vector bundle $E$ is an example of a bundle of Lie monoids, and it is moreover a subcategory of the tensor bundle $\oplus_{k=0}^\infty\otimes^k E$, which is a bundle of monoids that fails to have finite-dimensional fibres. Moreover, in statistical mechanics, physicists often work with an infinite number of microstates, and in quantum mechanics with infinite-dimensional Hilbert spaces. These examples show the need for introducing infinite-dimensional Lie categories. Since the theory of infinite-dimensional (Banach) manifolds with corners is already well-developed in \cite{corners}, a theory of infinite-dimensional Lie categories with corners seems realizable.
\item Multiplicative differential forms on Lie categories. On Lie groupoids, such structures can be used to describe integrated counterparts of Poisson structures \cite{poisson}, and more recently they have been used to provide a natural generalization of connections on principle bundles in \cite{mec}. We suspect that interesting geometric structures can be described with multiplicative differential forms on Lie categories.
\item Haar systems on Lie categories. Just as Haar systems on Lie groupoids provide a generalization of a Haar measure on a Lie group (and connects the theory of Lie groupoids to noncommutative geometry), a further generalization to Lie categories should provide a means of equipping the space $C_c(\C)$ of compactly supported functions on the space of arrows of a given category with the convolution product. In this fashion, one expects to generalize the construction of a groupoid C*-algebra, but due to the lack of existence of inverses of arrows, there is a-priori no natural way of obtaining the involution, hence the construction potentially generalizes only to a \textit{category Banach algebra}. We strongly suspect that a sensible notion of a Haar system on a Lie category will rather be defined in terms of a measure $\mu$ on the space $\comp\C$ of composible pairs of arrows, satisfying certain invariance and continuity conditions, instead of being defined in terms of left-invariant $t$-fibre supported measures on $\C$. Roughly speaking, the convolution on $C_c(\C)$ would then conceivably be defined by 
$
(f_1*f_2)(g)=\int_{m^{-1}(g)}f_1(g_1)f_2(g_2) \d \mu(g_1,g_2),
$
where the integration is done over the set $m^{-1}(g)=\set{(g_1,g_2)\in \comp \C\given g_1g_2=g}$ of composible arrows which compose to $g\in\C$.
	
An interesting question for mathematical physics might be whether the principle of least action in physics can be used within the theory of Lie categories, possibly using Haar systems, to describe existing or yield  novel physical theories.
\item Smooth sieves on Lie categories. In the context of Lie groupoids, the notion of a sieve is vacuous since all morphisms are invertible -- any sieve on an object must equal the whole $t$-fibre over that object. However, a notion of a smooth sieve seems possible for Lie categories, and their properties should be researched, together with the properties of Grothendieck sites of such sieves.
\item Dynamical systems. The theory of Lie groupoids has successfully been applied to describe dynamical systems, see e.g.\ \cite{dynamics}; a natural question is to what extent the theory of Lie categories presented in our paper can be used to generalize these results, to describe non-invertible dynamics.
\item Generalization of Morita equivalence from the context of Lie groupoids to Lie categories.
\end{enumerate}

\section*{Acknowledgments}
I express my gratitude to my supervisor Pedro Resende for his steady direction and patience; I also thank my second supervisor Ioan Mărcuț for his readiness to discuss Lie categories, for ideas on (counter-)examples, and for keeping a sharp eye on my proofs. Finally, I  thank Jure Kališnik and Rui L. Fernandes for additional fruitful ideas and talks.

\appendix
\section{Transversality on manifolds with corners}
\label{sec:transversality_corners}

We state here some basic definitions and results regarding finite-dimensional manifolds with corners that we need in the paper, mostly drawing from  \cite{corners}; in what follows, $X$ is assumed to be a second-countable topological space. Let $V$ be a real $n$-dimensional Banach space\footnote{Keep in mind that picking a basis of $V$ gives a homeomorphism $V\rightarrow\R^n$.}; denote by $\Lambda=(\lambda_i)_i$ a (possibly empty) set of linearly independent covectors $\lambda_i\in V^*$ and let \[V_\Lambda=\set{v\in V\given \lambda(v)\geq 0\text{ for all }\lambda\in\Lambda}.\]
Vacuously, there holds $V_\Lambda=V$ when $\Lambda = \emptyset$. We will call $\Lambda$ a \textit{corner-defining system} on vector space $V$, and also denote  \[V_\Lambda^0=\set{v\in V\given \lambda(v)= 0\text{ for all }\lambda\in\Lambda}=\cap_{\lambda\in\Lambda}\ker\lambda.\]
Given such a corner-defining system, a \textit{chart with corners} on $X$ at $p\in X$ is a map $\varphi\colon U\rightarrow V_\Lambda$ where $U\subset X$ is an open neighborhood of $p$, $\varphi(U)\subset V_\Lambda$ is an open neighborhood of $0$, and $\varphi$ is a homeomorphism onto its image with $\varphi(p)=0$. Such a chart is said to be $n$\textit{-dimensional}, and the point $p$ is said to have \textit{index} $|\Lambda|$, for which we will write $\ind_X(p)=|\Lambda|$. 

We say that $X$ is an $n$-dimensional \textit{manifold with corners} if there is an $n$-dimensional chart with corners around every point, and any two such charts $\varphi$ and $\varphi'$ are \textit{compatible}, i.e.\ $\varphi(U\cap U')\subset V_\Lambda$ and $\varphi'(U\cap U')\subset V_{\Lambda'}$ are open subsets, and $\varphi'\circ\varphi^{-1}\colon \varphi(U\cap U')\rightarrow\varphi'(U\cap U')$ is a diffeomorphism.\footnote{Denote $\R^n_k=\R^{n-k}\times[0,\infty)^k$. A map $ U\rightarrow V$ between open subsets of $\R^n_k$ and $\R^m_l$ is \textit{smooth} at $p\in U$, if it admits a smooth extension to an open neighborhood of $p$ in $\R^n$.} Such a collection of charts with corners is called an \textit{atlas with corners}. On a manifold with corners, the map $\ind_X\colon X\rightarrow \mathbb{N}_0$ is well-defined by boundary invariance theorem found in \cite[Theorem 1.2.12]{corners}. 

We will use the following notation regarding the elementary terms for manifolds with corners: the $k$\textit{-boundary} of $X$ is denoted by $\partial^kX=\set{p\in X\given \ind_X(p)\geq k}$, and we say that $X$ is a \textit{manifold with boundary} if $\partial X:=\partial^1 X\neq \emptyset$ and $\partial^2X=\emptyset$. The $k$\textit{-stratum} of $X$ is its subspace $S^k(X)=\set{p\in X\given \ind_X(p)=k}$. The set of connected components of the $k$-stratum is denoted by $\mathcal S^k(X)$, and the set of $k$\textit{-faces} of $X$ is the family 
\[\mathcal F^k(X)=\set*{\bar S\given S\in \mathcal S^k(X)}\]
of topological closures in $X$ of the the components of the $k$-stratum. We note that the $k$-stratum $S^k(X)$ can be given the following canonical differential structure induced by $X$: if $p\in S^k(X)$ and $\varphi\colon U\rightarrow V_\Lambda$ is a corner chart at $p$, then the restriction 
\[
\varphi|_{U\cap S^k(X)}\colon U\cap S^k(X)\rightarrow V_\Lambda^0
\] is a corner chart at $p$ on $S^k(X)$ since there holds $\varphi(U\cap S^k(X))=\varphi(U)\cap V_\Lambda^0$. With this structure, $S^k(X)$ becomes a manifold without boundary of dimension $n-k$.

A subspace $X'\subset X$ is said to be a \textit{submanifold} of $X$, if for any $p\in X'$ there is a chart with corners $\varphi\colon U\rightarrow V_\Lambda$ at $p$ on $X$, and a linear subspace $V'$ together with a corner-defining system $\Lambda'$ on $V'$, such that $\varphi(U\cap X')=\varphi(U)\cap V'_{\Lambda'}$ and this is an open subset of $V'_{\Lambda'}$. Such a chart is said to be \textit{adapted} to $X''$ by means of $(V',{\Lambda'})$, and gives us a way of defining an intrinsic atlas with corners on $X'$. If there holds $\partial X'=\partial X\cap X'$, we say that $X'\subset X$ is a \textit{neat} submanifold.

In the context of manifolds with corners, it is more useful to use non-infinitesimal notions of submersivity and transversality. 
\begin{defn}
\label{defn:top_sub}
A smooth map $f\colon X\rightarrow X'$ between manifolds with corners is a \textit{topological submersion} at $p\in X$, if there is an open neighborhood $U$ of $f(p)$ in $X'$ and a smooth map $\sigma\colon U\rightarrow X$ such that $\sigma(f(p))=p$ and $f\circ\sigma=\id_{U}$. If this holds for all $p\in X$, we just say that $f\colon X\rightarrow X'$ is a topological submersion.
\end{defn}
The notion of a topological submersion is stronger than the usual infinitesimal one which can be seen by differentiating the equality $f\circ\sigma=\id_U$. Moreover, in the context of boundaryless manifolds they are equivalent, which is an easy consequence of the usual rank theorem.

\begin{defn}
Suppose that $f\colon X\rightarrow X'$ is a smooth map between manifolds with corners, and $X''\subset X'$ is a submanifold. The map  $f$ is \textit{topologically transversal} to $X''$ at $p\in X$, written symbolically as \[f\pitchfork_p X'',\] if either $f(p)\notin X''$, or there is a chart $\varphi'\colon U'\rightarrow V'_{\Lambda'}$ on $X'$ at $p$ adapted to $X''$ by means of $(V'',{\Lambda''})$ and an open neighborhood $U$ of $p$ in $X$, such that $f(U)\subset U'$ and
\[\begin{tikzcd}
	\tau\colon U & {U'} & {\varphi'(U')} & {(V''\oplus W'')_{L^*\Lambda'}} & {W''}
	\arrow["{f|_U}", from=1-1, to=1-2]
	\arrow["{\varphi'}", "\approx"', from=1-2, to=1-3]
	\arrow["{L^{-1}}", from=1-3, to=1-4]
	\arrow["{\mathrm{pr}_2}", from=1-4, to=1-5]
\end{tikzcd}\]
is a topological submersion at $p$, where $W''$ is any complementary subspace to $V''$ in $V'$, and $L\colon V''\oplus W''\rightarrow V'$ is the linear isomorphism given by $L(v,w)=v+w$. If for all $p\in X$ we have $f\pitchfork_p X$, we just write $f\pitchfork X''$ and say $f$ is topologically transversal to $X''$. 
\end{defn}
\noindent Note that $f\pitchfork_p X''$ implies the usual infinitesimal transversality condition
\begin{align}
\label{eq:inf_trans}
\d f_p(T_p X)+ T_{f(p)}X''=T_{f(p)}X'.
\end{align}
Indeed, if $v'\in T_{f(p)}X'$, there holds
\[
v'=\d f_x(v)+\d(\varphi')_p^{-1}L\big(\pr_1 L^{-1}\d(\varphi')_p(v'-\d f_x(v)),0\big)
\]
where $v\in T_p X$ with $\d \tau_p (v)=\mathrm{pr}_2  L^{-1}\d(\varphi')_p(v')$ exists since $\tau$ is a submersion at $p$. We will show in Proposition \ref{prop:transversality_char} that \eqref{eq:inf_trans} implies $f\pitchfork_p X''$ in case $X$ has no boundary.

We now state the main transversality theorem for manifolds with corners, which is a generalization of the same result for boundaryless manifolds.
\begin{prop}
\label{prop:transversality}
Let $f\colon X\rightarrow X'$ be a smooth map between manifolds with corners and let $X''\subset X'$ be a neat submanifold. If $f\pitchfork X''$, then $f^{-1}(X'')\subset X$ is a submanifold with $\codim_X f^{-1}(X)=\codim_{X'}X''$, whose tangent bundle is
\[T(f^{-1}(X''))=(\d f)^{-1}(TX'').\]
Moreover, $f^{-1}(X'')\subset X$ is totally neat, i.e.\ $S^k(f^{-1}(X))=f^{-1}(X'')\cap  S^k(X)$ for any $k\geq 0$. In particular, $f^{-1}(X)$ is a neat submanifold of $X$.
\end{prop}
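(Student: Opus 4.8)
The plan is to argue locally and reduce everything to the structure of a topological submersion onto a boundaryless target. Since being a submanifold, having a prescribed tangent space, and total neatness are all conditions checked pointwise on the subset, it suffices to produce, for each $p\in f^{-1}(X'')$, an adapted corner chart with the desired properties (points outside $f^{-1}(X'')$ are irrelevant to the definition of submanifold). First I would invoke $f\pitchfork_p X''$ to obtain a chart $\varphi'\colon U'\rightarrow V'_{\Lambda'}$ on $X'$ at $f(p)$, adapted to $X''$ by means of $(V'',\Lambda'')$, a complementary subspace $W''$ to $V''$ in $V'$, a neighborhood $U$ of $p$, and the topological submersion $\tau=\pr_2\circ L^{-1}\circ\varphi'\circ(f|_U)\colon U\rightarrow W''$ from the definition.

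Next I would identify $f^{-1}(X'')\cap U=\tau^{-1}(0)$. The inclusion $\subseteq$ is immediate: adaptedness gives $\varphi'(f(x))\in V''_{\Lambda''}\subset V''$, whence $L^{-1}\varphi'(f(x))\in V''\oplus 0$ and $\tau(x)=0$. For $\supseteq$ one has $\tau(x)=0$ if and only if $\varphi'(f(x))\in V''$; the point already lies in $\varphi'(U')\subset V'_{\Lambda'}$, so it satisfies every inequality $\lambda\geq 0$ with $\lambda\in\Lambda'$, and the key input is that neatness of $X''$ forces $\Lambda''$ to be induced by restricting $\Lambda'$ to $V''$. Hence the $\Lambda''$-inequalities hold automatically, so $\varphi'(f(x))\in V''_{\Lambda''}$ and $f(x)\in X''$. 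This is the one place where the neatness hypothesis is indispensable.

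I would then pass to a local normal form for the topological submersion $\tau$. Because its target $W''$ is boundaryless and $\tau$ admits a local section through $p$, the theory of topological submersions in \cite{corners} provides a corner chart $\chi\colon U_0\rightarrow Z_\Gamma$ at $p$, with $Z=Z_0\oplus W''$ and $\Gamma$ a corner-defining system on $Z_0$, in which $\tau$ becomes the restriction of the linear projection $Z_0\oplus W''\rightarrow W''$; crucially, the $W''$-directions carry no corner inequalities. Consequently $\tau^{-1}(0)\cap U_0=\chi^{-1}\big((Z_0)_\Gamma\oplus 0\big)$, exhibiting $f^{-1}(X'')$ as a submanifold of codimension $\dim W''=\codim_{V'}V''=\codim_{X'}X''$. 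Since the slice is cut out by setting the boundaryless coordinates $W''$ to zero, it inherits the full corner-defining system $\Gamma$, so $\ind_{f^{-1}(X'')}(x)=\ind_X(x)$ for every $x\in f^{-1}(X'')$; this yields total neatness $S^k(f^{-1}(X''))=f^{-1}(X'')\cap S^k(X)$ for all $k\geq 0$, and in particular neatness.

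Finally, for the tangent bundle I would differentiate $\tau=\pr_2\circ L^{-1}\circ\varphi'\circ f$ at a point $x\in f^{-1}(X'')$. By the normal form $T_x(f^{-1}(X''))=\ker\d\tau_x$; unwinding the composition and using $T_{f(x)}X''=\d(\varphi')_{f(x)}^{-1}(V'')$ together with the fact that $L^{-1}$ identifies $V''$ with $\ker\pr_2$, one obtains $\ker\d\tau_x=(\d f_x)^{-1}(T_{f(x)}X'')$, that is $T(f^{-1}(X''))=(\d f)^{-1}(TX'')$. The main obstacle is the corners-category normal form used above, namely turning the abstract local section of a topological submersion into a genuine product chart in which the fibre becomes a coordinate slice along boundaryless directions, since without it neither the submanifold structure nor the preservation of index is transparent; the secondary subtlety is the corner-inequality bookkeeping in the identification $f^{-1}(X'')\cap U=\tau^{-1}(0)$, which hinges on neatness of $X''$.
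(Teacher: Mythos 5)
Your argument is sound, but be aware that it takes a genuinely different route from the paper: the paper's entire proof of this proposition is the citation \cite[Proposition 7.1.14]{corners}, so you are reconstructing the content of that reference rather than paralleling anything in the text. The comparison therefore turns on the two steps of yours that carry all the weight; both are true, but both are asserted rather than proved, and they are exactly what the paper outsources. (1) Your claim that neatness forces $V''_{\Lambda''}$ to agree near $0$ with $V''\cap V'_{\Lambda'}$ does need an argument, but a short convexity argument supplies it: if some $v_0\in V''\cap V'_{\Lambda'}$ in a small convex chart neighborhood had $\lambda''(v_0)<0$, join it by a segment to a point $v_1$ interior to $V''_{\Lambda''}$ (where neatness, read pointwise as $\ind_{X''}\geq 1\Leftrightarrow\ind_{X'}\geq 1$, makes every $\lambda'\in\Lambda'$ strictly positive); at the first parameter $t^*<1$ where the segment meets a face of $V''_{\Lambda''}$, neatness produces some $\lambda'_0\in\Lambda'$ vanishing there, and affinity of $\lambda'_0$ along the segment (positive at $t=0$, zero at $t^*<1$) forces $\lambda'_0(v_0)<0$, contradicting $v_0\in V'_{\Lambda'}$. (2) The product normal form for the topological submersion $\tau$ at $p$ is the real content, and the generic appeal to ``the theory of topological submersions in \cite{corners}'' is where a referee would press; it can, however, be assembled from the paper's own appendix. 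The final lemma there shows that a topological submersion at $p$ with interior target value restricts to an infinitesimal submersion of the stratum $S^k(X)$, $k=\ind_X(p)$; one then splits a corner chart at $p$ as a product of a piece of $S^k(X)$ with $[0,\infty)^k$, straightens $\tau|_{S^k(X)}$ by the boundaryless rank theorem, and applies the inverse function theorem to the corner-preserving map $(s,c)\mapsto(\,\cdot\,,\tau(s,c),c)$, whose local inverse again preserves the $[0,\infty)^k$ factor; this is precisely the chart you postulate, with all corner inequalities in the fibre directions. Granting these two lemmas, your identification $f^{-1}(X'')\cap U=\tau^{-1}(0)$, the codimension count, the index preservation giving total neatness, and the computation $T(f^{-1}(X''))=(\d f)^{-1}(TX'')$ all go through. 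What your route buys is a proof self-contained in the paper's framework; what the paper's route buys is brevity, at the cost of hiding in a citation exactly the two points you had to assert.
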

\begin{proof}
\cite[Proposition 7.1.14]{corners}.
\end{proof}
In the context of Lie categories with nonempty regular boundaries, this result enables us to show that the set of composable morphisms has a structure of a smooth manifold. Let us show how.
\begin{lem}
\label{lem:bdry_topsub}
Let $f\colon X\rightarrow X'$ be a smooth map from a manifold $X$ with boundary to a boundaryless manifold $X'$, such that both $f$ and $\partial f$ are submersions. Then $f$ is a topological submersion.
\end{lem}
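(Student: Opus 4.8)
The plan is to verify the topological-submersion condition of Definition \ref{defn:top_sub} pointwise, treating interior and boundary points separately. The crucial observation is that a local section through a boundary point can be chosen to lie entirely within $\partial X$, and that the existence of such a section is exactly what the hypothesis on $\partial f$ provides; the rest is routine.

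First I would dispose of the interior case. If $p\in\Int X$, then $\Int X$ is an open boundaryless submanifold of $X$ and $f|_{\Int X}\colon\Int X\rightarrow X'$ is a submersion between boundaryless manifolds. As noted after Definition \ref{defn:top_sub}, infinitesimal submersivity and topological submersivity coincide in the boundaryless setting (via the usual rank theorem), so there is an open neighborhood $U$ of $f(p)$ and a smooth section $\sigma\colon U\rightarrow\Int X\subset X$ with $\sigma(f(p))=p$ and $f\circ\sigma=\id_U$.

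The main point is the boundary case, $p\in\partial X$. Since $X$ is a manifold with boundary (so $\partial^2 X=\emptyset$), its boundary $\partial X$ is a boundaryless manifold carrying its canonical smooth structure, with respect to which the inclusion $\iota\colon\partial X\hookrightarrow X$ is smooth. By hypothesis $\partial f\colon\partial X\rightarrow X'$ is a submersion between boundaryless manifolds, hence a topological submersion; this yields an open neighborhood $U$ of $f(p)$ and a smooth section $\sigma_\partial\colon U\rightarrow\partial X$ with $\sigma_\partial(f(p))=p$ and $(\partial f)\circ\sigma_\partial=\id_U$. Composing with $\iota$ produces the smooth map $\sigma=\iota\circ\sigma_\partial\colon U\rightarrow X$, which satisfies $\sigma(f(p))=p$ and $f\circ\sigma=(\partial f)\circ\sigma_\partial=\id_U$, i.e.\ a local section of $f$ through $p$. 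Having exhibited a local smooth section at every point, I conclude that $f$ is a topological submersion.

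I do not expect a genuine obstacle here, only one conceptual subtlety worth flagging: a section through $p\in\partial X$ cannot in general be taken valued in $\Int X$, since its domain $U$ is a full (boundaryless) neighborhood of $f(p)$ while $p$ lies on the boundary, forcing $\sigma$ into $\partial X$ near $p$; submersivity of $\partial f$ is precisely what guarantees such a boundary-valued section exists. That the hypothesis on $\partial f$ cannot be dropped is illustrated by $f\colon\R\times[0,\infty)\rightarrow\R$, $f(x,y)=y$, where $\partial f$ is constant and $f$ fails to admit a section through the origin.
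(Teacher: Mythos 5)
Your proof is correct and follows essentially the same route as the paper's: treat interior points via the rank theorem applied to $f|_{\Int X}$, and boundary points via the rank theorem applied to $\partial f$, whose local sections into $\partial X$ become sections of $f$ after composing with the inclusion $\partial X\hookrightarrow X$. The paper states this in two lines; your write-up merely spells out the details, and your closing counterexample $f(x,y)=y$ on $\R\times[0,\infty)$ correctly illustrates why the hypothesis on $\partial f$ is indispensable.
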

\begin{proof}
We want to show $f$ is a topological submersion at any $p\in X$. If $p\in \Int X$ this follows from the usual rank theorem used on $f|_{\Int X}$, and if $p\in \partial X$ it follows from the usual rank theorem used on $f|_{\partial X}$.
\end{proof}
\begin{lem}
\label{lem:topsub_transversal}
If $f\colon X\rightarrow X'$ is a smooth topological submersion between manifolds with corners and $\partial X'=\emptyset$, then $f\pitchfork X''$ holds for any submanifold $X''\subset X'$.
\end{lem}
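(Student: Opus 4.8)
The plan is to verify the defining condition of topological transversality pointwise, by exhibiting at each $p\in X$ a local section of the relevant composite map $\tau$. Fix $p\in X$; if $f(p)\notin X''$ there is nothing to check, so assume $f(p)\in X''$ and choose a chart $\varphi'\colon U'\to V'_{\Lambda'}$ on $X'$ at $f(p)$ adapted to $X''$ by means of $(V'',\Lambda'')$. The crucial simplification I would stress at the outset is that, since $\partial X'=\emptyset$, the corner-defining system $\Lambda'$ of the ambient chart is empty, so $V'_{\Lambda'}=V'$ and moreover $L^*\Lambda'$ is empty as well. Consequently the target of
\[
\tau=\mathrm{pr}_2\circ L^{-1}\circ\varphi'\circ f|_U\colon U\to (V''\oplus W'')_{L^*\Lambda'}=W''
\]
is the \emph{full} boundaryless vector space $W''$, irrespective of the corner structure $\Lambda''$ carried by $X''$ itself. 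The whole problem thus reduces to showing that this one map $\tau$ is a topological submersion at $p$.

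Next I would factor $\tau=g\circ f$, where $g=\mathrm{pr}_2\circ L^{-1}\circ\varphi'\colon U'\to W''$. Both factors are topological submersions: the map $g$ is a composition of a diffeomorphism, a linear isomorphism, and a linear projection, all between boundaryless manifolds, hence a submersion and therefore a topological submersion by the equivalence noted after Definition \ref{defn:top_sub}; and $f$ is a topological submersion by hypothesis. The idea is then to build a section of the composite $\tau$ out of sections of the two factors.

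Concretely, I would pick a smooth local section $\rho$ of $g$ near $g(f(p))$ with $\rho(g(f(p)))=f(p)$, and a smooth local section $\sigma$ of $f$ near $f(p)$ with $\sigma(f(p))=p$ (both exist by topological submersivity), then shrink the domain of $\rho$ so that its image lies in the domain of $\sigma$ and set $\sigma\circ\rho$. A short check gives $(\sigma\circ\rho)(\tau(p))=\sigma(f(p))=p$ and, using $f\circ\sigma=\id$ and $g\circ\rho=\id$,
\[
\tau\circ(\sigma\circ\rho)=g\circ f\circ\sigma\circ\rho=g\circ\rho=\id,
\]
so $\sigma\circ\rho$ is the desired local section and $\tau$ is a topological submersion at $p$. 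Since $p$ was arbitrary, $f\pitchfork X''$.

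I do not anticipate a genuine obstacle: the argument is essentially bookkeeping with adapted charts, and the only conceptual point is recognizing that boundarylessness of $X'$ forces $L^*\Lambda'$ to be empty, so that $\tau$ maps into a full vector space and the elementary principle ``a section of a composite is the composite of sections'' applies verbatim. The only mild care needed is in matching up the neighborhoods so that $f\circ\sigma=\id$ actually holds on the image of $\rho$, which is arranged simply by shrinking domains.
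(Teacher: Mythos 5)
Your proposal is correct and follows essentially the same route as the paper: choose a chart adapted to $X''$ (necessarily with $\Lambda'=\emptyset$ since $\partial X'=\emptyset$), factor $\tau$ as $\bigl(\mathrm{pr}_2\circ L^{-1}\circ\varphi'\bigr)\circ f|_U$, and conclude that $\tau$ is a topological submersion as a composition of topological submersions. The only difference is that the paper asserts this composition fact without proof, whereas you verify it explicitly by composing local sections (with the appropriate shrinking of domains), which is a harmless and correct elaboration.
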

\begin{proof}
Suppose $f(p)\in X''$ and let $\varphi'\colon U'\rightarrow V'$ be a chart on $X'$ at $f(p)$ adapted to $X''$ by means of $(V'',{\Lambda''})$. Let $W''$ be any complementary subspace to $V''$ in $V$; continuity of $f$ ensures there is a neighborhood $U\subset X$ of $p$ such that $f(U)\subset U'$, and now the composition
\[\begin{tikzcd}
	U & {U'} & {\varphi'(U')} & {V''\oplus W''} & {W''}
	\arrow["{f|_U}", from=1-1, to=1-2]
	\arrow["{\varphi'}", "\approx"', from=1-2, to=1-3]
	\arrow["{L^{-1}}", from=1-3, to=1-4]
	\arrow["{\mathrm{pr}_2}", from=1-4, to=1-5]
\end{tikzcd}\]
is a topological submersion as a composition of topological submersions $f|_U$ and $\pr_2 L^{-1}\varphi'$.
\end{proof}
\begin{cor}
\label{cor:fibred_prod}
Let $X$ and $Y$ be manifolds with boundaries and $Z$ a manifold without boundary. If $f\colon X\rightarrow Z$ and $g\colon Y\rightarrow Z$ are smooth maps such that $f,f|_{\partial X}$ and g,$g|_{\partial Y}$ are submersions, then $X{\tensor*[_f]{\times}{_g}}Y=(f\times g)^{-1}(\Delta_{Z})\subset X\times Y$ is a submanifold with tangent space at $(p,q)$ equal to
\[
T_{(p,q)}(X{\tensor*[_f]{\times}{_g}}Y)=\set{(v,w)\in T_p X\oplus T_q Y\given \d f(v)=\d g(w)},
\]
and its boundary is $\partial(X{\tensor*[_f]{\times}{_g}}Y)=(X{\tensor*[_f]{\times}{_g}}Y )\cap (\partial X\times Y\cup X\times\partial Y)$.
\end{cor}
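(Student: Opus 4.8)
The plan is to realize the fibred product as a preimage and then invoke the corners transversality theorem (Proposition \ref{prop:transversality}). Set $Z'=Z\times Z$, which is boundaryless, let $F=f\times g\colon X\times Y\to Z'$, and observe that $X{\tensor*[_f]{\times}{_g}}Y=F^{-1}(\Delta_{Z})$. The diagonal $\Delta_{Z}\subset Z'$ is a closed embedded submanifold, and since both $\Delta_{Z}$ and $Z'$ are boundaryless it is automatically neat ($\partial\Delta_{Z}=\emptyset=\Delta_{Z}\cap\partial Z'$). Thus, once I establish the topological transversality $F\pitchfork\Delta_{Z}$, Proposition \ref{prop:transversality} immediately yields that $F^{-1}(\Delta_{Z})$ is a totally neat submanifold of $X\times Y$ with tangent bundle $T(F^{-1}(\Delta_{Z}))=(\d F)^{-1}(T\Delta_{Z})$.

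The crux is therefore to produce $F\pitchfork\Delta_{Z}$, and here it is essential to work with the non-infinitesimal (topological) notion rather than with \eqref{eq:inf_trans}, because $X\times Y$ has corners and the infinitesimal condition alone does not feed into Proposition \ref{prop:transversality}. Since the target $Z'$ is boundaryless, Lemma \ref{lem:topsub_transversal} reduces the claim to showing that $F$ is a topological submersion. I would first apply Lemma \ref{lem:bdry_topsub} to $f$ and to $g$ separately: the hypotheses that $f,\partial f$ and $g,\partial g$ are submersions are exactly what that lemma requires, so $f\colon X\to Z$ and $g\colon Y\to Z$ are each topological submersions. It then remains to note that a product of topological submersions is a topological submersion: if $\sigma\colon U\to X$ and $\tau\colon U'\to Y$ are local smooth sections with $\sigma(f(p))=p$ and $\tau(g(q))=q$, then $\sigma\times\tau\colon U\times U'\to X\times Y$ is a local smooth section of $F$ through $(p,q)$ defined on the neighborhood $U\times U'$ of $F(p,q)$, whence $F$ is a topological submersion at every point.

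It remains to identify the tangent space and the boundary. At a point $(p,q)\in F^{-1}(\Delta_{Z})$, write $z=f(p)=g(q)$; then $\d F_{(p,q)}(v,w)=(\d f_p(v),\d g_q(w))$, while $T_{(z,z)}\Delta_{Z}$ is the diagonal of $T_zZ\oplus T_zZ$, so the identity $T(F^{-1}(\Delta_{Z}))=(\d F)^{-1}(T\Delta_{Z})$ gives precisely $\set{(v,w)\in T_pX\oplus T_qY\given \d f(v)=\d g(w)}$. For the boundary, Proposition \ref{prop:transversality} furnishes total neatness, $S^k(F^{-1}(\Delta_{Z}))=F^{-1}(\Delta_{Z})\cap S^k(X\times Y)$ for every $k$; taking the union over $k\geq 1$ and using $\partial(X\times Y)=\partial X\times Y\cup X\times\partial Y$ for a product of manifolds with boundary yields the asserted $\partial(X{\tensor*[_f]{\times}{_g}}Y)=(X{\tensor*[_f]{\times}{_g}}Y)\cap(\partial X\times Y\cup X\times\partial Y)$.

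The main obstacle I anticipate is conceptual rather than computational: one must resist verifying only the infinitesimal transversality \eqref{eq:inf_trans}, which would be insufficient on the corner stratum $\partial X\times\partial Y$, and instead route the whole argument through topological submersivity so that Lemmas \ref{lem:bdry_topsub} and \ref{lem:topsub_transversal} become applicable. Everything else—neatness of $\Delta_{Z}$, the product-of-sections observation, and the tangent/boundary bookkeeping—is routine.
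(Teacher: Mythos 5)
Your proposal is correct and follows the paper's own proof essentially verbatim: reduce to $F=f\times g$, use Lemma \ref{lem:bdry_topsub} to get that $f$ and $g$ (hence their product) are topological submersions, invoke Lemma \ref{lem:topsub_transversal} for $F\pitchfork\Delta_Z$ since $Z\times Z$ is boundaryless, and finish with Proposition \ref{prop:transversality} using neatness of $\Delta_Z$. The only difference is that you spell out details the paper leaves implicit (the product-of-sections argument and the tangent/boundary bookkeeping), which is fine.
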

\begin{proof}
Lemma \ref{lem:bdry_topsub} ensures $f$ and $g$ are topological submersions, and it is easy to check that $f\times g\colon X\times Y\rightarrow Z\times Z$ is also a smooth topological submersion. Now Lemma \ref{lem:topsub_transversal} implies $(f\times g)\pitchfork\Delta_Z$ since $Z$ is boundaryless, and Proposition \ref{prop:transversality} finishes the proof since $\Delta_Z\subset Z\times Z$ is trivially a neat submanifold.
\end{proof}
The following proposition shows in particular that in the case of boundaryless manifolds, the usual infinitesimal notion of transversality is equivalent to the one above.
\begin{prop}
\label{prop:transversality_char}
Let $f\colon X\rightarrow X'$ be a smooth map between manifolds with corners and $X''\subset X'$ a submanifold. For any $p\in f^{-1}(X'')$ with $\ind_X(p)=k$, the following statements are equivalent.
\begin{enumerate}
\item $\Im\d (f|_{S^k(X)})_p+ T_{f(p)}X''=T_{f(p)}X'.$
\item $f|_{S^k(X)}\pitchfork_p X''$.
\item $f\pitchfork_p X''$.
\end{enumerate}
\end{prop}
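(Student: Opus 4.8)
The plan is to reduce all three conditions to statements about a single auxiliary map $\tau$ valued in the boundaryless complement $W''$, and to exploit that the stratum $S^k(X)$ is itself boundaryless. First I would fix a corner chart $\varphi\colon U\to V_\Lambda$ at $p$ with $\varphi(p)=0$ and $|\Lambda|=\ind_X(p)=k$; after choosing a basis we may take $V_\Lambda\cong\R^{n-k}\times[0,\infty)^k$, so that $U\cap S^k(X)$ corresponds to $V^0_\Lambda\cong\R^{n-k}\times\{0\}$. Recall that at a corner point $T_pX$ is the full space $V$, while $T_p(S^k(X))=V^0_\Lambda$. On the target I fix a chart $\varphi'\colon U'\to V'_{\Lambda'}$ at $f(p)$ adapted to $X''$ by $(V'',\Lambda'')$, with $\varphi'(f(p))=0$, a complement $W''$ to $V''$ in $V'$, the isomorphism $L(v,w)=v+w$, and $\tau=\pr_2\circ L^{-1}\circ\varphi'\circ f|_U\colon U\to W''$. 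Since $T_{f(p)}X'\cong V'$ and $T_{f(p)}X''\cong V''$, the linear map $\pr_2\circ L^{-1}\circ\d(\varphi')_{f(p)}$ is exactly the quotient projection $T_{f(p)}X'\to T_{f(p)}X'/T_{f(p)}X''\cong W''$; hence for any subspace $E\le T_pX$ one has $\d\tau_p(E)=W''$ if and only if $\d f_p(E)+T_{f(p)}X''=T_{f(p)}X'$.

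This identification gives $(i)\Leftrightarrow(ii)$ at once: condition $(i)$ says precisely that $\d(\tau|_{U\cap S^k(X)})_p=\d\tau_p|_{V^0_\Lambda}$ is surjective, and because both $S^k(X)$ and $W''$ are boundaryless, this infinitesimal submersivity coincides, as observed after Definition \ref{defn:top_sub}, with $\tau|_{U\cap S^k(X)}$ being a topological submersion at $p$, i.e.\ with $(ii)$. For $(ii)\Rightarrow(iii)$ I would take a local section $\sigma_0\colon U''\to U\cap S^k(X)$ of $\tau|_{U\cap S^k(X)}$ with $\sigma_0(0)=p$; since $U\cap S^k(X)\subset U$ and $\tau$ agrees with $\tau|_{U\cap S^k(X)}$ there, $\sigma_0$ is also a section of $\tau$, so $\tau$ is a topological submersion at $p$ and $(iii)$ holds.

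The crux is $(iii)\Rightarrow(i)$, where topological transversality of the full map must be made to ``see'' the stratum. Suppose $\tau$ is a topological submersion at $p$, with a smooth section $\sigma\colon U''\to U$ over a neighborhood $U''$ of $0$ in $W''$, so $\sigma(0)=p$ and $\tau\circ\sigma=\id_{U''}$. Reading $\sigma$ in the chart as a map into $V_\Lambda=\R^{n-k}\times[0,\infty)^k$, each component $\lambda\circ\sigma$ (for $\lambda\in\Lambda$) is smooth, nonnegative, and vanishes at the interior point $0$ of the boundaryless open set $U''$; hence $0$ is an interior minimum and $\d(\lambda\circ\sigma)_0=\lambda\circ\d\sigma_0=0$. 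Therefore $\d\sigma_0(W'')\subseteq\bigcap_{\lambda\in\Lambda}\ker\lambda=V^0_\Lambda$, and differentiating $\tau\circ\sigma=\id$ gives $\id_{W''}=\d\tau_0\circ\d\sigma_0=(\d\tau_0|_{V^0_\Lambda})\circ\d\sigma_0$, so $\d\tau_0|_{V^0_\Lambda}$ is surjective, which is $(i)$. Chaining $(i)\Leftrightarrow(ii)$, $(ii)\Rightarrow(iii)$ and $(iii)\Rightarrow(i)$ closes the equivalence.

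I expect this last step to be the only nontrivial point: the observation that the boundary-defining coordinates of any local section attain an interior minimum at the corner $p$, so their differentials vanish there and force $\d\sigma_0$ into the stratum's tangent space $V^0_\Lambda$. Everything else is bookkeeping resting on the quotient-projection description of $\tau$ and the boundaryless equivalence of topological and infinitesimal submersions.
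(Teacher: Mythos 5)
Your proposal is correct, and its scaffolding coincides with the paper's: the same adapted chart, the same auxiliary map $\tau=\pr_2\circ L^{-1}\circ\varphi'\circ f|_U$ valued in $W''$, the equivalence of infinitesimal and topological submersivity on the boundaryless stratum to pass between $(i)$ and $(ii)$, and the observation that a section of $\tau|_{U\cap S^k(X)}$ is also a section of $\tau$ to get $(ii)\Rightarrow(iii)$.

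The one genuine divergence is the closing implication, and there your version is sharper than the paper's. The paper closes the cycle with $(iii)\Rightarrow(ii)$ via a standalone lemma whose proof takes a section $\sigma$ of $\tau$, pushes a path $\gamma$ with prescribed velocity through it, and claims that the curve $\delta=\varphi\circ\sigma\circ\gamma$ maps into $V^0_\Lambda$. That claim is false as literally stated: for the projection $\R\times[0,\infty)\to\R$, the section $x\mapsto(x,x^2)$ produces a curve through the corner that immediately leaves the stratum. What the argument needs, and what is true, is only that $\dot\delta(0)\in V^0_\Lambda$, which follows because each $\lambda\circ\delta\geq 0$ attains an interior minimum at $0$. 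Your proof of $(iii)\Rightarrow(i)$ runs exactly this interior-minimum argument, but applied directly to the section itself: each $\lambda\circ\sigma$ is nonnegative and vanishes at the interior point $0$ of the boundaryless set $U''$, so $\d\sigma_0(W'')\subset V^0_\Lambda$, and differentiating $\tau\circ\sigma=\id$ forces surjectivity of $\d\tau_p|_{V^0_\Lambda}$. This avoids paths altogether, closes the cycle as $(iii)\Rightarrow(i)$ rather than $(iii)\Rightarrow(ii)$, and quietly repairs the one questionable step in the paper's own treatment. One small point of care: your $(i)\Leftrightarrow(ii)$ is phrased relative to your fixed adapted chart, whereas the definition of $f\pitchfork_p X''$ only asks for \emph{some} adapted chart; this is harmless, since a topological submersion with respect to any adapted chart implies the chart-independent infinitesimal condition $(i)$ (the paper's remark around \eqref{eq:inf_trans}), and conversely $(i)$ yields the topological submersion property in every adapted chart, but it is worth saying explicitly.
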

\begin{proof}
To show $(i)\Rightarrow (ii)$, take a chart $\varphi'\colon U'\rightarrow V'_{\Lambda'}$ on $X'$ at $f(p)$ adapted to $X''$ by means of $(V'',{\Lambda''})$, take a complementary subspace $W''$ to $V''$ of $V'$, and consider the map
\[\begin{tikzcd}
	\tau^k\colon U & {U'} & {\varphi'(U')} & {(V''\oplus W'')_{L^*\Lambda'}} & {W''}
	\arrow["{f|_U}", from=1-1, to=1-2]
	\arrow["{\varphi'}", "\approx"', from=1-2, to=1-3]
	\arrow["{L^{-1}}", from=1-3, to=1-4]
	\arrow["{\mathrm{pr}_2}", from=1-4, to=1-5]
\end{tikzcd}\]
where $U$ is the domain of a chart neighborhood $\varphi\colon U\rightarrow V$ in $S^k(X)$ of $p$; by continuity we may assume $f(U)\subset U'$. Since $\partial S^k(X)$ is boundaryless, it is enough to show that $\tau^k$ is a submersion. To that end, take any $w\in T_0 W''\cong W''$ and then $u:=\d(\varphi')_p^{-1}L(0,w)\in T_{f(p)}X'$, so by assumption there exist $v\in T_p X$ and $v''\in T_{f(p)}X''$ such that $u=\d f_p(v)+v''$. Since $L^{-1}\d(\varphi')_p(v'')\in V''$, we get
\[
\d h_p(v)=\pr_2L^{-1}\d(\varphi')(u-v'')=w.
\]
Conversely, $(ii)\Rightarrow (i)$ follows from the fact that topological transversality implies transversality.

For the implication $(ii)\Rightarrow (iii)$, note that if $Z\xrightarrow{g} Z'\xrightarrow{h} Z''$ are smooth maps such that $h\circ g$ is a topological submersion at $p\in Z$, then it is easy to see $h$ is a topological submersion at $g(p)$. Use this result for the composition $S^k(X)\hookrightarrow X\xrightarrow \tau W''$ which equals $\tau^k$. The converse implication $(iii)\Rightarrow (ii)$ is a consequence of the following lemma used on the map $\tau$.
\end{proof}

\begin{lem}
If $f\colon X\rightarrow X'$ is a smooth map between manifolds with corners, which is a topological submersion at $p$ and there holds $f(p)\in \Int (X')$, then $f|_{S^k(X)}$ is a submersion at $p$, where $k=\ind_X(p)$.
\end{lem}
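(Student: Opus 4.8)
The plan is to extract a local section from the topological-submersion hypothesis and to show that, because $f(p)$ is an interior point, this section is forced to be tangent to the stratum $S^k(X)$ at $p$; surjectivity of $\d(f|_{S^k(X)})_p$ will then drop out immediately.

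First I would apply the definition of topological submersion at $p$ to obtain an open neighborhood $U$ of $f(p)$ in $X'$ and a smooth section $\sigma\colon U\to X$ with $\sigma(f(p))=p$ and $f\circ\sigma=\id_U$. Since $f(p)\in\Int(X')$, I shrink $U$ so that $U\subset\Int(X')$, hence boundaryless, and shrink it further (using continuity of $\sigma$ together with $\sigma(f(p))=p$) so that $\sigma(U)$ lies in the domain of a corner chart $\varphi\colon W\to V_\Lambda$ at $p$, where $\varphi(p)=0$ and $|\Lambda|=\ind_X(p)=k$. Because $f(p)$ is interior I may also choose a chart $\varphi'$ centered at $f(p)$ taking values in the full Banach space $V'$, and form the chart representative $\hat\sigma=\varphi\circ\sigma\circ(\varphi')^{-1}$, a smooth map defined on an open neighborhood of $0$ in $V'$ with $\hat\sigma(0)=0$.

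The heart of the argument is a minimum observation. For each defining covector $\lambda\in\Lambda$, the smooth real-valued function $y\mapsto\lambda(\hat\sigma(y))$ is nonnegative, since $\hat\sigma$ takes values in $V_\Lambda$, and it vanishes at the interior point $y=0$. Thus $0$ is a minimum, so its differential there vanishes, giving $\lambda(\d(\hat\sigma)_0(w))=0$ for every $w\in V'$ and every $\lambda\in\Lambda$. Consequently $\Im\d(\hat\sigma)_0\subset V_\Lambda^0=\cap_{\lambda\in\Lambda}\ker\lambda$, which under the chart $\varphi$ is precisely $T_pS^k(X)$, recalling that $S^k(X)$ carries the induced structure whose tangent space at $p$ is $V_\Lambda^0$. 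Translating back through the charts, $\d\sigma_{f(p)}$ maps $T_{f(p)}X'$ into $T_pS^k(X)$.

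Finally I would differentiate $f\circ\sigma=\id_U$ at $f(p)$ to obtain $\d f_p\circ\d\sigma_{f(p)}=\id_{T_{f(p)}X'}$. Since the inclusion $S^k(X)\hookrightarrow X$ is an immersion realizing $T_pS^k(X)$ as a subspace of $T_pX$, one has $\d(f|_{S^k(X)})_p=\d f_p|_{T_pS^k(X)}$; combined with the previous paragraph, $\d\sigma_{f(p)}$ is a right inverse of $\d(f|_{S^k(X)})_p$, so for any $v'\in T_{f(p)}X'$ we get $v'=\d(f|_{S^k(X)})_p(\d\sigma_{f(p)}(v'))$. Hence $f|_{S^k(X)}$ is a submersion at $p$. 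The one genuinely new step is the minimum argument pinning $\Im\d\sigma_{f(p)}$ inside $T_pS^k(X)$; the rest is chart bookkeeping and the chain rule, so that is where I would expect the only real subtlety to lie.
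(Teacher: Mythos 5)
Your proof is correct and follows essentially the same route as the paper's: both extract the local section $\sigma$ from the topological-submersion hypothesis, use interiority of $f(p)$ to force $\Im\d\sigma_{f(p)}$ into $V_\Lambda^0\cong T_pS^k(X)$, and conclude by differentiating $f\circ\sigma=\id$. The only real difference is that the paper argues one vector at a time via paths $\delta=\varphi\circ\sigma\circ\gamma$ (and its intermediate claim that $\delta$ maps into $V_\Lambda^0$ is an overstatement — consider $t\mapsto t^2$ into $[0,\infty)$), whereas your explicit first-derivative-at-an-interior-minimum argument is the cleaner justification of exactly that step.
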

\begin{proof}
Let $\varphi\colon U\rightarrow V_{\Lambda}$ be a chart on $X$ at $p$ and $\varphi'\colon X'\rightarrow V'$ on $X'$ at $f(p)$. Note that $\varphi(U\cap S^k(X))=\varphi(U)\cap V_{\Lambda}^0$.

Let $v'\in T_{f(p)}X'$ be arbitrary. By assumption, there is a neighborhood $U'$ of $f(p)$ and a smooth map $\sigma\colon U'\rightarrow U\subset X$ such that $\sigma(f(p))=p$ and $f\circ \sigma =\id_{U'}$. Since $f(p)\in\Int X'$, there is a smooth path $\gamma\colon(-\varepsilon,\varepsilon)\rightarrow U'$ with $\dot\gamma(0)=v'$, and now consider the path $\delta=\varphi\circ\sigma\circ\gamma\colon(-\varepsilon,\varepsilon)\rightarrow \varphi(U)\subset V_{\Lambda}$. Since $\delta(0)=0$ and $\delta$ is defined on an open interval, we must have that $\delta$ maps into $V_{\Lambda}^0$, so $\dot\delta(0)\in T_0V_{\Lambda}^0\cong V_{\Lambda}^0$. Taking $v=\d(\varphi^{-1})_0(\dot\delta(0))$ hence yields $v\in T_p(S^k(X))$ with $\d f_p(v)=v'$.
\end{proof}









\begin{bibdiv}
\begin{biblist}

\bib{ehr1959}{article}{
 author = {Ehresmann, Charles}
 title = {Cat{\'e}gories topologiques et cat{\'e}gories diff{\'e}rentiables},
 year = {1959}
 language = {French}
 pages={137-150}
 journal = {Centre {Belge} {Rech}. {Math}., Colloque de Géométrie différentielle globale}
 Keywords = {58A05,18F99}
 Zbl = {0205.28202}
}

\bib{mackenzie}{book}{
    AUTHOR = {Mackenzie, Kirill C. H.},
     TITLE = {General theory of {L}ie groupoids and {L}ie algebroids},
    SERIES = {London Mathematical Society Lecture Note Series},
    VOLUME = {213},
 PUBLISHER = {Cambridge University Press, Cambridge},
      YEAR = {2005},
     PAGES = {xxxviii+501},
      ISBN = {978-0-521-49928-3; 0-521-49928-3},
   MRCLASS = {58H05 (53D17)},
  MRNUMBER = {2157566},
MRREVIEWER = {Rui Loja Fernandes},
       DOI = {10.1017/CBO9781107325883},
       URL = {https://doi.org/10.1017/CBO9781107325883},
}

\bib{catsquantum}{book}{
    AUTHOR = {Heunen, Chris},
    AUTHOR = {Vicary, Jamie},
     TITLE = {Categories for quantum theory},
    SERIES = {Oxford Graduate Texts in Mathematics},
    VOLUME = {28},
      NOTE = {An introduction},
 PUBLISHER = {Oxford University Press, Oxford},
      YEAR = {2019},
     PAGES = {xii+324},
      ISBN = {978-0-19-873961-6; 978-0-19-873962-3},
   MRCLASS = {18-02 (18Mxx 81P10)},
  MRNUMBER = {3971584},
MRREVIEWER = {Shawn Xingshan Cui},
       DOI = {10.1093/oso/9780198739623.001.0001},
       URL = {https://doi.org/10.1093/oso/9780198739623.001.0001},
}


\bib{difftop}{book}{
    AUTHOR = {Victor Guillemin},
    AUTHOR = {Alan Pollack}
     TITLE = {Differential topology},
      NOTE = {Reprint of the 1974 original},
 PUBLISHER = {AMS Chelsea Publishing, Providence, RI},
      YEAR = {2010},
     PAGES = {xviii+224},
      ISBN = {978-0-8218-5193-7},
   MRCLASS = {58-01 (57-01)},
  MRNUMBER = {2680546},
       DOI = {10.1090/chel/370},
       URL = {https://doi.org/10.1090/chel/370},
}

\bib{dynamics}{article}{
    AUTHOR = {Cabrera, Alejandro},
    AUTHOR = {del Hoyo, Matias},
    AUTHOR = {Pujals, Enrique},
     TITLE = {Discrete dynamics and differentiable stacks},
   JOURNAL = {Rev. Mat. Iberoam.},
  FJOURNAL = {Revista Matem\'{a}tica Iberoamericana},
    VOLUME = {36},
      YEAR = {2020},
    NUMBER = {7},
     PAGES = {2121--2146},
      ISSN = {0213-2230},
   MRCLASS = {37A99 (22A22 58J42)},
  MRNUMBER = {4163995},
       DOI = {10.4171/rmi/1194},
       URL = {https://doi.org/10.4171/rmi/1194},
}

\bib{corners}{book}{
    AUTHOR = {Margalef Roig, Juan},
    AUTHOR = {Outerelo Dom\'{\i}nguez, Enrique},
     TITLE = {Differential topology},
    SERIES = {North-Holland Mathematics Studies},
    VOLUME = {173},
      NOTE = {With a preface by Peter W. Michor},
 PUBLISHER = {North-Holland Publishing Co., Amsterdam},
      YEAR = {1992},
     PAGES = {xvi+603},
      ISBN = {0-444-88434-3},
   MRCLASS = {58Bxx (58-01 58A05 58D15)},
  MRNUMBER = {1173211},
MRREVIEWER = {Wies\l aw Sasin},
}

\bib{sussman}{article}{
    AUTHOR = {Sussmann, H\'{e}ctor J.},
     TITLE = {Orbits of families of vector fields and integrability of
              distributions},
   JOURNAL = {Trans. Amer. Math. Soc.},
  FJOURNAL = {Transactions of the American Mathematical Society},
    VOLUME = {180},
      YEAR = {1973},
     PAGES = {171--188},
      ISSN = {0002-9947},
   MRCLASS = {58A30 (53C10)},
  MRNUMBER = {321133},
MRREVIEWER = {A. Morimoto},
       DOI = {10.2307/1996660},
       URL = {https://doi.org/10.2307/1996660},
}

\bib{lee}{book}{
    AUTHOR = {Lee, John M.},
     TITLE = {Introduction to smooth manifolds},
    SERIES = {Graduate Texts in Mathematics},
    VOLUME = {218},
   EDITION = {Second},
 PUBLISHER = {Springer, New York},
      YEAR = {2013},
     PAGES = {xvi+708},
      ISBN = {978-1-4419-9981-8},
   MRCLASS = {58-01 (53-01 57-01)},
  MRNUMBER = {2954043},
}

\bib{baez2011}{article}{
    AUTHOR = {Baez, John C.},
    AUTHOR = {Fritz, Tobias},
    AUTHOR = {Leinster, Tom},
     TITLE = {A characterization of entropy in terms of information loss},
   JOURNAL = {Entropy},
  FJOURNAL = {Entropy. An International and Interdisciplinary Journal of
              Entropy and Information Studies},
    VOLUME = {13},
      YEAR = {2011},
    NUMBER = {11},
     PAGES = {1945--1957},
   MRCLASS = {94A17 (62B10)},
  MRNUMBER = {2868846},
MRREVIEWER = {Konstantinos Zografos},
       DOI = {10.3390/e13111945},
       URL = {https://doi.org/10.3390/e13111945},
}


\bib{poisson}{book}{
	AUTHOR = {Crainic, Marius},
	AUTHOR = {Fernandes, Rui Loja},
	AUTHOR = {M\u{a}rcu\c{t}, Ioan}
     TITLE = {Lectures on {P}oisson geometry},
    SERIES = {Graduate Studies in Mathematics},
    VOLUME = {217},
 PUBLISHER = {American Mathematical Society, Providence, RI},
      YEAR = {2021},
     PAGES = {xix+479},
      ISBN = {978-1-4704-6430-1},
   MRCLASS = {53D17 (22Exx 53Dxx 58Hxx)},
  MRNUMBER = {4328925},
MRREVIEWER = {Jochen Merker},
       DOI = {10.1090/gsm/217},
       URL = {https://doi.org/10.1090/gsm/217},
}

\bib{mec}{unpublished}{
	AUTHOR = {Fernandes, Rui Loja},
	AUTHOR = {M\u{a}rcu\c{t}, Ioan}
     TITLE = {Multiplicative Ehresmann Connections},
      YEAR = {2022},      
       note = {Preprint available at \url{https://arxiv.org/abs/2204.08507}},
}




  \end{biblist}
\end{bibdiv}

\Addresses

\end{document}